\author{Tristan L\'{e}ger}
\address{Courant Institute of Mathematical Sciences, 251 Mercer Street, New York, NY 10012, USA }
\email{tleger@cims.nyu.edu}
\theoremstyle{plain}
\newtheorem{theorem}{Theorem}
\newtheorem{remark}[theorem]{Remark}
\newtheorem{proposition}[theorem]{Proposition}
\newtheorem{lemma}[theorem]{Lemma}
\newtheorem{corollary}[theorem]{Corollary}
\numberwithin{equation}{section} \numberwithin{theorem}{section}
\begin{document}

\title[Scattering for a particle interacting with a Bose gas]
{Scattering for a particle interacting with a Bose gas}

\vspace{-0.3in}
\begin{abstract}
We study the asymptotic behavior of solutions to an ODE - Schr\"{o}dinger type system that models the interaction of a particle with a Bose gas. We show that the particle has a ballistic trajectory asymptotically, and that the wave function describing the Bose gas converges to a soliton in $L^{\infty}.$
\end{abstract}

\maketitle

\tableofcontents

\section{Introduction}
\subsection{Background and results obtained}
In this paper we study a system of equations that models the interaction of a particle with a Bose gas. It was introduced by J. Fr\"{o}hlich, Z. Gang and A. Soffer in \cite{Fro3}, \cite{Fro4}. More precisely it is shown by these authors that in the mean-field limit, the dynamics of the interaction is described by the following system: 
\begin{equation} 
\begin{cases}
\dot{X}(t) &= P(t) \\
\dot{P}(t) &= g \int \nabla_x W (X(t)-x) \bigg \lbrace \vert \alpha(t) \vert ^2 - \frac{\rho_0}{g^2} \bigg \rbrace  dx \\
i \partial_t \alpha &= \Bigg( - \Delta + g W(X(t) - x) \Bigg) \alpha(t) + \kappa \Bigg( \phi \star \big \lbrace \vert \alpha(t) \vert^2 - \frac{\rho_0}{g^2} \big \rbrace \Bigg) \alpha
\end{cases}
\end{equation}
with $X : \mathbb{R} \rightarrow \mathbb{R}^3 , P : \mathbb{R} \rightarrow \mathbb{R}^3 , \alpha : \mathbb{R} \times \mathbb{R}^3 \rightarrow \mathbb{C}. $ The function $X$ denotes the position of the particle, and the wave function $\alpha$ is used to describe the state of the Bose gas. \\
$\phi: \mathbb{R}^3 \rightarrow \mathbb{R}$ and $W:\mathbb{R}^3 \rightarrow \mathbb{R}$ are interaction potentials between the particle and the field. \\
$g,\kappa$ are coupling constants and $\rho_0$ is a constant that quantifies the friction. The dot stands for time differentiation. \\
These equations are Hamiltonian with respect to
\begin{align*}
\mathcal{H} &= \frac{P^2}{2} + \int_{\mathbb{R}^3} \vert \nabla \alpha (x) \vert^2 + g W(X-x) \big( \vert \alpha (x) \vert^2 - \frac{\rho_0}{g^2} \big) dx  \\
&+ \frac{\kappa}{2} \int_{\mathbb{R}^6} \big( \vert \alpha (x) \vert^2 - \frac{\rho_0}{g^2} \big) \phi(y-x) \big( \vert \alpha (x) \vert^2 - \frac{\rho_0}{g^2} \big)~ dx dy.
\end{align*}
The boundary condition $\alpha (x) \longrightarrow \sqrt{\frac{\rho_0}{g^2}}$ as $\vert x \vert \rightarrow + \infty$ is imposed for the system to have finite energy. \\
It is then natural to introduce the new field variable $\beta(x) = \alpha(x) - \sqrt{\frac{\rho_0}{g^2}} $ with $\beta (x) \longrightarrow 0$ as $\vert x \vert \rightarrow +\infty.$ Writing the system using this unknown yields
\begin{equation} \label{full}
\begin{cases}
\dot{X} (t) &= P(t) \\
\dot{P} (t) &= g \int \nabla_x W (X(t)-x) \bigg \lbrace \vert \beta(t) \vert ^2 +2 \displaystyle \sqrt{\frac{\rho_0}{g^2}} \Re \beta (t) \bigg \rbrace  dx \\
i \partial_t \beta &= \Bigg( -\Delta + g W(X(t) - x) \Bigg) \beta(t) + \sqrt{\rho_0} W(X(t)-x) \\
&+ \kappa \Bigg( \phi \star \big \lbrace \vert \beta \vert^2 + 2 \sqrt{\frac{\rho_0}{g^2}} \Re \beta \big \rbrace \Bigg)(x) \Bigg( \beta + \sqrt{\frac{\rho_0}{g^2}} \Bigg).
\end{cases}
\end{equation}
This system presents several difficulties from the point of view of the description of the asymptotic behavior of solutions. Indeed the field satisfies a Schr\"{o}dinger type equation with a quadratic nonlinearity, which is known to be delicate to analyse. The method of space-time resonances has been developed by P. Germain, N. Masmoudi and J. Shatah (see for example \cite{GMS}) to tackle this kind of difficulty. Another related method has been developed by S. Gustafson, K. Nakanishi and T.-P. Tsai in \cite{GNT} to study the Gross-Pitaevskii equation.
\\
An additional difficulty for the system \eqref{full} is to understand the coupling between the two equations. This is the object of the present paper. Since this is already a significant obstacle, we focus on a simplified version of \eqref{full}, namely its Bogoliubov limit: the ratio $2 \kappa \rho_0 / g^2 := \lambda $ is kept constant (we will normalize it to 1) while $g$ and $\kappa$ are sent to 0. We are left with the following equations:
\begin{equation} \label{system}
\begin{cases}
i \partial _t \beta &= - \Delta \beta + \Re \beta + \sqrt{{\rho}_0} W(X(t)-x) \\
\dot{X} (t) &= P(t)  \\
\dot{P} (t) &=  \sqrt{{\rho}_0} \Re \langle \nabla _x W^{X(t)} , \beta \rangle 
\end{cases}
\end{equation}
where $\langle f , g \rangle : = \Re \int \bar{f} g$ denotes the inner product and $W^{X(t)}(\cdot):= W(X(t)-\cdot). $ \\
This system retains the coupling of the equations, but there is no nonlinear interaction of the field with itself. \\
Moreover it has an associated Hamiltonian:
\begin{align}
\label{hamiltonian} \mathcal{H}(X, P, \beta, \nabla \beta) = \frac{P^2}{2} + \int_{\mathbb{R}^3} \vert \nabla \beta \vert^2 dx + \int_{\mathbb{R}^3} \vert \Re \beta \vert^2 dx + 2 \sqrt{\rho _0} \int_{\mathbb{R}^3} W^{X} \Re \beta dx
\end{align}
Note that we will keep the parameter $\rho_0$ in the equations. We elected to normalize the potential in some sense rather than this physical parameter. It allows us to state our smallness assumptions in terms of this physical constant (which quantifies the friction in the system) rather than in terms of a less meaningful quantity related to the interaction potential. See the notation section of the paper for the normalization alluded to here. 
\\
The system \eqref{system} has already been studied by D.Egli, J. Fr\"{o}hlich, Z.Gang, A. Shao in \cite{E}. They proved global existence of solutions in some natural Sobolev spaces (their result includes global well-posedness in the energy space). They also identified two possible regimes for the particle: subsonic or supersonic, depending on whether the speed of the particle is above or below a certain threshold called speed of sound (it corresponds to the parameter $\lambda$ introduced earlier). The main reason for distinguishing between these two regimes is that there exist traveling wave solutions to \eqref{system} in the subsonic regime, but not in the supersonic regime. \\
In \cite{Fro1}, J. Fr\"ohlich and Z.Gang studied the asymptotic behavior of solutions. They proved that if the initial velocity of the particle and the initial data of the field are small enough, then the particle remains in the subsonic regime for all times and approaches ballistic motion asymptotically. They also proved that the wave function of the Bose gas has (asymptotically) the shape of a soliton. \\
These two same authors also studied the supersonic case in \cite{Fro2}, where a different phenomenon takes place: the supersonic particle decelerates by emission of what is known in the Physics literature as Cherenkov radiation (see for example \cite{J}), and approaches the speed of sound asymptotically. They assumed that the initial data for the field is small and that the initial velocity of the particle is within a well-calibrated supersonic range: it cannot be too close to the threshold between regimes, or too large. This restriction comes from the perturbative nature of their proof. They also suppose that a Fermi golden rule is satisfied by the potential of interaction. More precisely, they require its Fourier transform to vanish at a certain order at 0. This kind of condition appears naturally when dealing with systems of interaction between a particle and a field (see for example \cite{SWlin}). The method used to carry out the analysis is related to the technique of analytic deformations that was developed to study quantum resonances. 
\\
Let's also mention some works on related systems: the case where $\kappa$ and $g$ are taken such that $\kappa = 0$ and $g \to 0 $ has been treated by J. Fr\"{o}hlich, Z. Gang and A. Soffer in \cite{Fro3}. In the case where $\kappa =0$ and $g \neq 0$, D. Egli and Z. Gang describe in \cite{E2} the asymptotic behavior of the particle and the wave function of the gas. We also mention the work of A. Komech and E. Kopylova \cite{KK} on a closely related model, where the same type of result is obtained for subsonic initial velocity. It is also assumed that the initial data of both the initial velocity and the field are close to a traveling wave solution. The point of view and methods used in this paper are closer to what is classically used to study asymptotic stability of travelling waves of the nonlinear Schr\"{o}dinger equation (see for example \cite{BP}, \cite{Cucc}).
\\
\\
In this paper our goal is to study the asymptotic behavior of the system described by the equations \eqref{system} for arbitrary initial velocity of the particle in space dimension five. We will say more about the relevance of this high dimension assumption in Section \ref{disc} below. \\ 
We assume that the potential (smooth and localized) satisfies a kind of Fermi Golden rule, namely that its Fourier transform vanishes at the origin. More precisely the potentials considered will be of the form $W=(-\Delta)^n V.$ Under some smallness conditions on the initial data of the field and the potential we prove that, for arbitrary initial velocity, the particle has ballistic motion asymptotically and that the field has the shape of a soliton. We give below a somewhat imprecise version of our main theorem. For a complete statement, see Theorem \ref{main} in Section \ref{results} below.
\begin{theorem} \label{mainheur}
Let $n > 1/4$ be a real number. Let $W$ be a smooth, fast decaying function such that $W = (-\Delta)^n V.$ \\
Let $\beta_0 \in H^2 (\mathbb{R}^5) \cap \textbf{W}^{3,1}(\mathbb{R}^5).$ \\
If $\rho_0,$ $\beta_0$ and $W$ are small enough, then there exist $P_{\infty} \in \mathbb{R}^5$ and $\beta_{\infty} \in L^{\infty}(\mathbb{R}^5)$ such that
\begin{eqnarray*}
P(t) \longrightarrow_{t \rightarrow + \infty} P_{\infty} \\
\Vert \beta(t) - \beta_{\infty}(\cdot - X(t)) \Vert_{{\infty}} \longrightarrow_{t \rightarrow + \infty} 0
\end{eqnarray*}
where X(t) denotes the position of the particle. \\
Moreover $\beta_{\infty}$ solves the following linear elliptic equation:
\begin{displaymath} \left( \begin{array}{cc}
P_{\infty} \cdot \nabla & - \Delta \\
\Delta -1 & P_{\infty} \cdot \nabla
\end{array} \right) \beta_{\infty} = \rho_0 
\left( \begin{array}{ll}
0 \\
W
\end{array} \right)
\end{displaymath}
\end{theorem}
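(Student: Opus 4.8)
The plan is to treat the particle equation and the field equation as a coupled system, bootstrap a good decay estimate for the field $\beta$ in the frame moving with the particle, and then extract the limiting velocity $P_\infty$ and limiting profile $\beta_\infty$ as a fixed point of the stationary problem. First I would change variables to the moving frame by setting $\gamma(t,y) = \beta(t, y + X(t))$, which turns the Schr\"odinger equation into one with an extra transport term $P(t)\cdot \nabla_y$ and a now \emph{fixed} (time-independent in $y$) source $\sqrt{\rho_0}\, W(-y)$, at the price of making the equation non-autonomous through $P(t)$. The structural point exploited by the Fermi golden rule hypothesis $W = (-\Delta)^n V$ is that the source term, written on the Fourier side, is $|\xi|^{2n}\widehat{V}(\xi)$, which vanishes at $\xi = 0$; this is precisely what is needed to integrate the singular resonant contributions coming from the symbol of the linearized Bogoliubov operator (the dispersion relation for $-\Delta + \Re(\cdot)$ has a sonic degeneracy at low frequencies), and the threshold $n > 1/4$ in dimension five is exactly the integrability exponent one gets after pairing $|\xi|^{2n}$ against the singular density near the characteristic variety.

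Second, I would set up a bootstrap on a weighted norm, something like $\sup_{t} \big( \|\beta(t)\|_{H^2} + (1+t)^{a}\|\beta(t) - \text{(stationary part)}\|_{L^\infty} \big) \le \varepsilon$, together with $\sup_t |P(t) - P(s)| \lesssim$ (a summable-in-time quantity). The global existence in the energy space is already available from \cite{E}, so this is genuinely an a priori estimate, not an existence argument. The field estimate uses the Duhamel representation for the moving-frame equation: the propagator of the Bogoliubov operator with the frozen velocity $P(t)$ satisfies dispersive/Strichartz bounds (in dimension five the decay rate $t^{-5/2}$ is comfortably integrable), and the error from freezing the velocity is controlled by $\dot P$, which in turn is controlled by $\beta$ through the third equation of \eqref{system}, namely $\dot P(t) = \sqrt{\rho_0}\,\Re\langle \nabla_x W^{X(t)}, \beta\rangle$. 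Because $W$ is a fixed Schwartz-class bump and in the moving frame $\langle \nabla_x W^{X(t)}, \beta\rangle = \langle \nabla W(-\cdot), \gamma(t)\rangle$, the decay of $\gamma(t)-\gamma_\infty$ toward the stationary profile gives $|\dot P(t)| \lesssim \varepsilon^2 (1+t)^{-a} + (\text{const})$; the constant piece is killed by the $\widehat W(0)=0$ cancellation, leaving a summable bound and hence convergence of $P(t)$ to some $P_\infty$.

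Third, once $P(t) \to P_\infty$ is established, I would identify $\beta_\infty$ as the unique decaying solution of the stationary equation obtained by dropping $\partial_t$ and setting $P = P_\infty$ in the moving-frame field equation: writing $\beta_\infty = a + ib$ and separating real and imaginary parts of $i P_\infty\cdot\nabla\beta_\infty = -\Delta\beta_\infty + a + \sqrt{\rho_0} W$ gives exactly the $2\times 2$ system in the statement. Solvability and decay of $\beta_\infty$ follow from the same frequency analysis: the symbol of the matrix operator is invertible away from the characteristic set $\{|\xi|^2(|\xi|^2+1) = (P_\infty\cdot\xi)^2\}$, and the source $|\xi|^{2n}\widehat V(\xi)$ vanishes fast enough at the origin (where the characteristic set is tangent to the low-frequency sonic cone) for $n > 1/4$ that $\widehat{\beta_\infty}$ is integrable, giving $\beta_\infty \in L^\infty$. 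Finally, $\|\beta(t) - \beta_\infty(\cdot - X(t))\|_\infty = \|\gamma(t) - \beta_\infty\|_\infty \to 0$ is read off from the bootstrap estimate.

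The main obstacle is the field decay estimate in the presence of the velocity coupling: one must close the loop in which the decay of $\beta$ feeds the smallness of $\dot P$, which feeds the accuracy of the frozen-coefficient propagator, which feeds back the decay of $\beta$ — all while handling the sonic degeneracy of the Bogoliubov symbol, which forces the use of the $W=(-\Delta)^n V$ cancellation in every resonant integral and pins down the exponent $n>1/4$. Managing the low-frequency analysis (stationary phase near a degenerate characteristic variety, uniformly in the slowly varying parameter $P(t)$) is where the real work lies; the extraction of $P_\infty$ and $\beta_\infty$ afterward is comparatively soft.
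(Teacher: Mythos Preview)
Your proposal has a genuine gap at the step where you claim ``the constant piece is killed by the $\widehat W(0)=0$ cancellation.'' It is not. In the moving frame, the pairing of $\nabla W$ against the stationary profile $S_{P}$ (the solution of $H(P)S_P=\rho_0(0,W)^T$ at the instantaneous velocity) is computed explicitly in the paper (Lemma~\ref{representation}):
\[
\rho_0\,\Re\Big\langle\begin{pmatrix}\nabla W\\0\end{pmatrix},\,H_\epsilon^{-1}(t)\begin{pmatrix}0\\W\end{pmatrix}\Big\rangle
\;\xrightarrow[\epsilon\to0^+]{}\;
-\rho_0\,\Lambda(|P|)\,(|P|-1)^{3+2n}\,\frac{P}{|P|}\,\mathbf 1_{\{|P|>1\}}.
\]
This is the \emph{friction} term: it vanishes in the subsonic regime but is strictly nonzero whenever the particle is supersonic. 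The Fermi golden rule $W=(-\Delta)^nV$ does not annihilate it; it only fixes the exponent $3+2n$ and, separately, is used later in the stationary phase analysis of one remainder term. Since the theorem allows arbitrary initial velocity (in particular supersonic $|P_0|>1$), this ``constant'' can be of order one initially, and your loop ``field decay $\Rightarrow$ $\dot P$ decay $\Rightarrow$ field decay'' cannot close: the $\dot P$ bound you extract is $|\dot P|\lesssim \varepsilon(1+t)^{-a}+O\big((|P|-1)_+^{3+2n}\big)$, and you have no mechanism to make the second term decay. There is also a circularity issue in your choice of reference profile: if you subtract $S_{P_\infty}$ you need $P_\infty$ before you have proved it exists, while if you subtract $S_{P(t)}$ the time derivative produces a source $\dot P\cdot\partial_P S_{P(t)}$ that again requires prior control of $\dot P$.

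The paper proceeds in the opposite order and this is the key idea you are missing. It first \emph{decouples} the particle from the field by deriving an equation for $\dot P$ that involves only $W$, $X$, $P$ (Lemma~\ref{reform} and Corollary~\ref{finalform}): $\dot P=-\rho_0\Lambda(|P|)(|P|-1)^{3+2n}\tfrac{P}{|P|}\mathbf 1_{\{|P|>1\}}+R_1+\cdots+R_4$, with the $R_j$ oscillatory in time. The bootstrap is then run on the two scalar quantities $(1+t)^{1+\alpha}|\dot P(t)|$ and $(1+t)^\alpha(|P(t)|-1)_+$, and it closes precisely because the friction term has a \emph{sign}: at a putative maximum of $(1+t)^\alpha(|P|-1)_+$ the friction dominates the remainders and pushes the quantity down. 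The Fermi golden rule enters only in bounding the worst remainder $R_3$, where the stationary point of the phase is shown to sit near the origin so that the factor $|\xi|^{2n}$ from $\widehat W$ supplies the missing decay. Only after $|\dot P|\lesssim(1+t)^{-1-\alpha}$ and hence $P(t)\to P_\infty$ are established does the paper turn to the field and prove $\|h-S_{P_\infty}\|_{L^\infty}\to0$ by a straightforward Duhamel argument (Section~\ref{scattering}). Your outline is salvageable for purely subsonic data (where the friction vanishes identically), but to cover the general case you must isolate the friction term and exploit its sign rather than try to cancel it.
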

Here $H^s(\mathbb{R}^5)$ and $\textbf{W}^{s,p}(\mathbb{R}^5)$ denote the usual Sobolev spaces.
\begin{remark}
Notice that the Fermi-Golden rule in the statement is weaker than in \cite{Fro2}. In fact we could assume $n>0$ only, and the proof would hold with minor changes (see Remark \ref{remarquefermi}). We made the choice to prove the theorem for $n>1/4$ to avoid technical fractional integration by parts. 
\end{remark}

\subsection{Difficulties of the problem}
In the next subsections we elaborate on the main difficulties that arise and our strategy to overcome them.\\ 
Unlike the work of Fr\"{o}hlich and Gang \cite{Fro2}, we do not use any contour deformation technique to estimate the various integrals that appear, and therefore do not require the potential to be analytic. Instead our methods are purely dispersive. This is partly motivated by the fact that such methods proved successful to study systems of particle-gas interaction type in the seminal work of A. Soffer and M. Weinstein in \cite{SWnonlin} and \cite{SWlin}. Indeed the difficulty in our problem is realted to the presence of threshold eigenvalues: Cherenkov radiations happen when the particle (the eigenvalue) interacts with the continuous spectrum of the linear operator that dictates the evolution of the field. This scenario corresponds to the particle having velocity above the speed of sound. It means that if the particle has velocity \emph{exactly} equal to the speed of sound, we are in the situation of a \emph{threshold eigenvalue.} This is notoriously hard to analyse, and the methods of analytic deformations are known not to be well-suited for their study. This was one of the motivations of A. Soffer and M. Weinstein for their development of a purely dispersive method to tackle such embedded eigenvalue problems (see the introduction of their paper \cite{SWlin}).\\
This is why Fr\"{o}hlich and Gang require the particle to remain in the supersonic range. Although its velocity tends to the speed of sound, their assumption that the initial velocity is well-calibrated (not too close to the speed of sound and not too large) allows them to control very precisely the position of the particle, and gives in particular an explicit lower bound on its distance to the speed of sound (threshold between subsonic and supersonic regimes). \\
In this paper however we allow the particle to move from one regime to the other, and its velocity is therefore allowed to be equal to the speed of sound. As stated above, our method is purely dispersive. However a major drawback is that is requires to work in space dimension 5 or higher. We discuss this in more detail in the next subsection.   

\subsection{The space dimension 5 assumption} \label{disc}
We now explain the relevance of the dimension five assumption. Such a condition was also present in the work on quantum resonance theory of A. Soffer and M. Weinstein \cite{SWlin} where the case of a threshold eigenvalue is considered in their example 5, Section 6. Note that this was out of reach from previous contour deformation methods. The method however requires the space dimension to be five or higher.  \\
Let's explain this difficulty, which is not inherent to their method, but comes up as soon as threshold eigenvalues are present. In this context we must estimate remainder terms that are oscillatory integrals of the form 
\begin{align*}
\int_{\mathbb{R}^d} \frac{e^{it \phi_1(t,\xi)}}{\phi_2 (t,\xi)} f(\xi) d\xi 
\end{align*}
where $\phi_1$ and $\phi_2$ denote phase functions. For such an example in our work see the term $R_4$ in Lemma \ref{reform}. \\
The fact that we are considering embedded eigenvalues translates into the phase $\phi_2$ vanishing. The integrand is then understood in a principal value sense. For the expected scattering to happen, we need the above integral to have integrable decay (in our paper for example the acceleration of the particle is written in terms of such integrals in Lemma \ref{reform}, therefore integrable decay implies that the velocity has a limit as $t \to \infty$). Let's write heuristically 
\begin{align*}
\int_{\mathbb{R}^d} \frac{e^{it \phi_1(t,\xi)}}{\phi_2 (t,\xi)} f(\xi) d\xi =(-i) \int_{0}^{\infty} \int_{\mathbb{R}^d} e^{it \phi_1(t,\xi)} e^{i \tau \phi_2 (\xi)} f(\xi) d\xi d\tau
\end{align*}
Now notice that if the stationary points of the two phases $\phi_1$ and $\phi_2$ are far enough apart, then one would expect a decay like $t^{-d/2}$ where $d \geqslant 3$ denotes the space dimension for the whole integral. However if the two phases are equal, then the decay is  worse: in this case we should only expect a decay like $t^{1-\frac{d}{2}}.$ In particular the term is not integrable if $d \leqslant 4.$ (For a rigorous proof of a closely related result, see the work of F. Bernicot and P. Germain in \cite{B}, theorem 3.1 (iii), (iv)). 
\\
In our problem, the situation $\phi_1 = \phi_2$ in the expression of $R_4$ happens if $X(t) = t P(t).$ This is typically true in the case where the particle has ballistic motion ($X(t) = t P_{\infty}$, $P(t) = P_{\infty}$ where $P_{\infty}$ is constant). In fact the problem under study here is worse since there are remainder terms that decay even slower than $R_4$ (see Section \ref{acceleration} for more details), hence the necessity of the dimension five assumption.
\\
We end this section with another related situation, the so-called double Duhamel trick. It is used to make sense of $L^2$ norms of expressions like $(-\Delta-E)^{-1} f$ where $E$ lies in the continuous spectrum of $-\Delta.$ As we said, such expressions are common in wave-particle systems since they model the interaction of the particle with the field. At first glance it seems difficult to even make sense of such a norm. But, as T. Tao explains in the third section of his paper \cite{Tao}, we replace the principal values by well-chosen oscillatory integral:
\begin{align*}
\Vert (-\Delta-E)^{-1} f \Vert^2 _{L^2(\mathbb{R}^d)} & \leqslant \int_0 ^{\infty} \int_{-\infty}^0 \vert \lbrace e^{it\Delta} f , e^{it' \Delta} f \rangle \vert ~dt dt' \\
                                                      & \lesssim \int_0 ^{\infty} \int_{-\infty}^0 \frac{1}{\langle t-t' \rangle ^{d/2}} \Vert f \Vert_{L^1(\mathbb{R}^d) \cap L^2(\mathbb{R}^d)} ~dt dt'.
\end{align*}
That last integral converges for $d \geqslant 5.$ In our situation we encounter a similar problem in that expressions like squares of principal values appear naturally (at least heuristically, see the introductory part of Section \ref{reform}. Moreover they are always paired against smooth functions. By placing one of the principal values on the smooth function (we can work in Fourier space by Plancherel's theorem), this is morally equivalent to bounding the $L^2-$ norm of a principal value. This is another reason why the space dimension 5 seems relevant in our problem.

\subsection{Strategy of the proof}
In this subsection we describe the plan of the proof of Theorem \ref{main}. There are three main steps:

\subsubsection*{Step 1: Reformulation of the equations} First we find a suitable equation satisfied by the velocity of the particle. Since our goal is to prove that the motion is ballistic asymptotically, we expect the acceleration of the particle to decay. However the field is supposed to approach the shape of a soliton and thus not decay in time. As a result there no hope to deduce directly from the evolution equation of the particle in \eqref{system} that $\dot{P}$ decays. \\
The same issue was encountered by Fr\"{o}hlich and Gang in \cite{Fro2}, and they also derived and worked with a different equation for the velocity of the particle. However we do not use the same equation as them, since it includes terms that are not well defined (much less decay) if we wish to allow the particle to move between the supersonic and subsonic regimes (see Section \ref{setup} for details, in particular the introductory part). Our equation does however retain a key feature from \cite{Fro2}, namely the friction term, that encodes the emission of Cherenkov radiation. It is the leading order term of the equation, and its negative sign forces the velocity to decay if the particle is in the supersonic regime. 
\\ The way we derive this term however is closer in spirit to the work of A. Soffer and M. Weinstein on quantum resonances in \cite{SWlin}. Motivated by this paper, we expect the leading term to appear as the interaction of the particle with the continuous spectrum of the linear operator that gives the evolution of the field (seen in the frame of the particle and denoted $H(t)$ in the sequel). Hence we work with a new variable (namely $H(t) h,$ see the proof of Lemma \ref{reform} in Section \ref{setup} for details) that makes this coupling term appear clearly. As we said the equation also has the advantage of having less singular remainder terms. At the end we obtain an equation of the form
\begin{align} \label{velocsimp}
\dot{P} = g \big( \vert P \vert -1 \big) + \textrm{Remainder terms}
\end{align}
where the first term $g\big( \vert P \vert -1 \big)$ represents the friction of the system. The function $g$ is equal to 0 on $(-\infty;0]$ and strictly positive in $(0;+\infty)$ (it is discontinuous at 0). This is consistent with the intuition that the particle should decelerate in the supersonic range, and not be affected by the friction when it is subsonic.
\\ 
Another useful aspect of the equation is that it does not involve the evolution of the field. This allows us to treat the evolution of the particle independently from that of the field.

\subsubsection*{Step 2: Decay estimate for the acceleration of the particle} In Section \ref{acceleration} we analyse the equation obtained in the previous step and show that the acceleration of the particle decays at a rate $(1+t)^{-1-\alpha},$ for some $\alpha >0.$ This integrable decay implies the asymptotic ballistic motion of the particle announced in Theorem \ref{main}. The proof uses in a crucial way the friction in the system, though in a different way than in \cite{Fro2}. The idea used in that paper is to consider the full equation \eqref{velocsimp} as a perturbation of $\dot{P} = g \big( \vert P \vert -1 \big). $ \\ 
In the situation under study here however we essentially only use the negative sign of the friction. This non-perturbative approach  allows us to treat more general initial velocities than in \cite{Fro2}. The bulk of this section is devoted to proving that this term is indeed the leading order term in the equation of the velocity, and that the other terms present (refered to as remainder terms in \eqref{velocsimp}) decay fast enough in time. Since these lower-order terms are oscillatory in time, the decay is proved through stationary phase analysis. The difficulty here is that, as it is the case in particle-field interaction systems, the decay is slower than when coupling is absent, see our earlier discussion on the dimension five assumption. In fact, as we mentioned, even the milder terms require the dimension to be five or higher. We will see that there is in fact a term that is worse and cannot be treated through a simple application of the stationary phase lemma, even in dimension five (it would only provide a decay like $t^{-1/2}$). To overcome this issue we study more carefully the phase of the term in question, and localize precisely its stationary point. We prove that it is close to the origin, which allows us to leverage the cancellation of the Fourier transform of the interaction potential (the Golden rule in our setting) to improve the decay estimate. On the technical side we note that our proof allows for a very weak version of the Golden rule to be satisfied: the cancellation of the Fourier transform of the potential can be fixed arbitrarily small (see Remark \ref{remarquefermi}).
\subsubsection*{Step 3: Asymptotic behavior of the field} Finally we come to the scattering of the field in Section \ref{scattering}. Once we have established the convergence of the velocity of the particle, and found the decay rate of its acceleration, the asymptotic behavior of the field follows rather quickly. We can use our knowledge of the asymptotic behavior of the particle to find an appropriate guess for the long-time soliton behavior of the field. Indeed we expect the soliton part of the field to be located around the position of the particle. To find its profile we simply pass to the limit as $t \to \infty$ in the evolution equation of the field seen in the moving frame of the particle: the equation of the field is of the form
\begin{align*}
\partial_t h = H(t) h + W
\end{align*}
where 
\begin{displaymath}
H(t) := \left( \begin{array}{ll}
P(t) \cdot \nabla & - \Delta \\
\Delta - 1 & P(t) \cdot \nabla
\end{array} \right) 
\end{displaymath}
Therefore the profile $S$ of the soliton should solve
\begin{align*}
0 = H(\infty) S + W
\end{align*}
where 
\begin{displaymath}
H(\infty) := \left( \begin{array}{ll}
P_{\infty} \cdot \nabla & - \Delta \\
\Delta - 1 & P_{\infty} \cdot \nabla
\end{array} \right) 
\end{displaymath}
Then we estimate the difference between the field and this guess using the decay of the acceleration of the particle proved in the previous step. We conclude that this difference converges to 0 in $L^{\infty}$ which finishes the proof. 

\subsection{Notations and main result} \label{results}
We end this introduction with a more precise statement of our main theorem. First we introduce some notations:
\begin{itemize}
\item We will use the following convention for the Fourier transform:
\begin{align*}
\mathcal{F} f (\xi)= \widehat{f}(\xi) = \int_{\mathbb{R}^5} e^{-i x \cdot \xi} f(x) dx
\end{align*}
therefore the inverse Fourier transform is defined as 
\begin{align*}
\mathcal{F}^{-1} f(x) = \frac{1}{(2 \pi)^5} \int_{\mathbb{R}^5} e^{i x \cdot \xi} f(\xi) d\xi
\end{align*}
\item The operator $ L := \sqrt{(-\Delta)(1-\Delta)} $ will also appear repeatedly in the paper. We will also write $ \phi_1 (\xi) := \mathcal{F} L (\xi) = \vert \xi \vert \sqrt{1+\vert \xi \vert^2} $. \\
\item We also introduce the operator $U = \displaystyle \frac{\sqrt{-\Delta}}{\sqrt{1-\Delta}}.$ This operator appears in the decay estimates for the semi-group $e^{itL}$ in Lemma \ref{classicdisp} in the appendix.\\ 
\item We define the following inner product: $\langle f , g \rangle = \Re \int_{\mathbb{R}^3} \bar{f} g . $ 
\item Since the system \eqref{system} is Hamiltonian, a natural space to consider is the energy space, denoted $\mathcal{E}$. It is the set of functions such that the Hamiltonian \eqref{hamiltonian} is well-defined and finite. We denote $ \Vert \cdot \Vert_{\mathcal{E}} := \Vert \Re \cdot \Vert_{H^1(\mathbb{R}^5)} + \Vert \Im \cdot \Vert_{\dot{H}^1(\mathbb{R}^5)} $ the norm of that space. 
\item The potential of interaction $W$ is assumed to have the following special form: $ W = (-\Delta)^n V $ where $\widehat{V}(0) \neq 0$ and $n > 1/4.$  This is the so-called Fermi Golden rule. This is already present in Fr\"{o}hlich and Gang's paper \cite{Fro2}. Note that we kept the parameter $\rho_0$ in the Hamiltonian \ref{hamiltonian} so that we can impose the condition that $\vert \widehat{V}(0) \vert = 1. $
\\
In addition to being assumed of the form $W=(-\Delta)^n V$ with $\vert \widehat{V}(0) \vert = 1 $, the potential considered will be very regular and localized. We use the following norm to quantify this information: 
\begin{align*}
&\Vert W \Vert_{n}' := \big \Vert W \big \Vert_{\textbf{W}^{4,1}} + \big \Vert (1+\vert x \vert^4) V \big \Vert_{H^{4n+4}} + \Vert W \Vert_{H^4}
\end{align*}
%note that the L^2 norm of V controls the L^1 of $\vert \hat{V} \vert^2$
where $H^s$ and $\textbf{W}^{4,1}$ denote the usual Sobolev spaces.  \\
We will omit the subscript $n$ to simplify notations. 
\item We denote $\mathcal{P}_n$ the set of functions $W$ of the form $W=(-\Delta)^n V$ where $\vert \widehat{V}(0) \vert = 1 $, and such that $\Vert W \Vert'<\infty.$ 
\item We will use the notation $W^X$ to denote the function $x \mapsto W(X-x).$
\item Finally we will need the following norm, useful to write decay estimates for the field:
\begin{align*}
\vert \vert \vert \beta_0 \vert \vert \vert := \Vert \beta_0 \Vert_{\mathcal{E}} + \Vert U^{3/2} \Re \beta_0 \Vert_{L^1} + \Vert U^{1/2} \Im \beta_0 \Vert_{L^1}  + \Vert U^{5/2} \Im \beta_0 \Vert_{L^{1}}  
\end{align*}
\end{itemize}
\bigskip
Let's now state precisely the main result of our paper. It says that asymptotically, the particle has a ballistic motion, and that the field scatters. More precisely we prove that it is well approximated (in $L^{\infty}$) by a traveling wave solution of the system:
\begin{theorem} \label{main}
Let $n > 1/4$ be a real number. \\
Let $W \in \mathcal{P}_n.$ Let $\beta_0 \in \mathcal{E}$ be such that $\vert \vert \vert \beta_0 \vert \vert \vert < \infty.$ Let $P_0 \in \mathbb{R}^5$ denote the initial velocity.  \\
Then there exists $1>\varepsilon_0 >0$ such that if $\rho_0<\varepsilon_0,$ there exists $\theta_0, \theta_0 ' >0$ such that if $\Vert W \Vert' < \theta_0$ and $\vert \vert \vert \beta_0 \vert \vert \vert < \theta_0'$, then scattering holds, in the sense that there exists $ (P_{\infty}, \beta_{\infty}) \in \mathbb{R}^5 \times  L^{\infty}(\mathbb{R}^5) $ such that \\
\begin{eqnarray*}
P(t) \longrightarrow_{t \rightarrow + \infty} P_{\infty} \\
\Vert \beta(t) - \beta_{\infty}(\cdot - X(t)) \Vert_{{\infty}} \longrightarrow_{t \rightarrow + \infty} 0
\end{eqnarray*}
and X(t) denotes the position of the particle. \\
Moreover $\beta_{\infty}$ solves the following linear elliptic equation:
\begin{displaymath} \left( \begin{array}{cc}
P_{\infty} \cdot \nabla & - \Delta \\
\Delta -1 & P_{\infty} \cdot \nabla
\end{array} \right) \beta_{\infty} = \rho_0 
\left( \begin{array}{ll}
0 \\
W
\end{array} \right)
\end{displaymath}
\end{theorem}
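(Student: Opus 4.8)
The plan is to follow the three-step strategy announced in the introduction. First I would derive a workable equation for the velocity of the particle. The naive equation $\dot{P} = \sqrt{\rho_0}\,\Re\langle \nabla_x W^{X(t)}, \beta\rangle$ is useless on its own since $\beta$ does not decay, so I would pass to the moving frame of the particle, introduce the field variable $h$ governed by $\partial_t h = H(t) h + W$ with $H(t)$ the matrix operator displayed in the introduction, and then introduce the auxiliary variable $H(t) h$ (equivalently, work with $L = \sqrt{(-\Delta)(1-\Delta)}$ and the associated semigroup $e^{itL}$). Duhamel's formula for $h$ then produces, after integration by parts in time and careful bookkeeping, an equation of the schematic form $\dot P = g(|P|-1) + R_1 + \dots + R_k$ where $g$ vanishes on $(-\infty,0]$ and is strictly positive on $(0,\infty)$; this is the content of Lemma \ref{reform}. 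The key point to extract here is that the friction term has the right sign and that the remainder terms $R_j$ are explicit oscillatory integrals involving the phases $t\phi_1(\xi)$ (from the free evolution $e^{itL}$) and $X(t)\cdot\xi$ (from the translation by the particle position), paired against Fourier transforms of $W$ and of the data.

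Second, I would run a bootstrap/continuity argument on the decay of the acceleration. Assuming a priori that $|\dot P(t)| \le C(1+t)^{-1-\alpha}$ on a maximal interval, one controls $|X(t) - tP(t)|$ and feeds this back into the oscillatory integrals $R_j$. Each $R_j$ is estimated by stationary phase: when the stationary points of $t\phi_1$ and of the translation phase are well-separated one gets the full dimensional decay $t^{-5/2}$, which is more than enough; the dangerous regime is $X(t)\approx tP(t)$, i.e. exactly the ballistic scenario one is trying to prove, where the two phases coalesce. For the worst term I would localize the stationary point, show it lies near the origin $\xi=0$, and there invoke the Fermi Golden rule $W = (-\Delta)^n V$ (so $\widehat W$ vanishes to order $2n > 1/2$ at $0$) to gain the missing powers of $t$ and recover integrable decay $(1+t)^{-1-\alpha}$. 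Crucially, the friction term is used non-perturbatively: on the set where $|P|>1$ it only helps (negative sign), and on the set where $|P|\le 1$ it is absent, so the a priori bound is closed by an ODE comparison argument rather than by treating $\dot P = g(|P|-1)$ as the unperturbed dynamics. Smallness of $\rho_0$, of $\|W\|'$ and of $|||\beta_0|||$ is what makes the bootstrap constant $C$ absorbable. Integrating $\dot P$ then gives $P(t)\to P_\infty$.

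Third, with $P(t)\to P_\infty$ and $\int^\infty |\dot P| < \infty$ in hand, the scattering of the field is comparatively soft. I would guess that the soliton profile $S$ (in the moving frame) solves $H(\infty) S + W = 0$, i.e. $S$ is the profile $\beta_\infty$ in the statement, and note that $H(\infty)$ is invertible on the relevant space precisely because for $P_\infty$ generic the operator is nondegenerate (the threshold case is the delicate one but is handled by the dispersive estimates of Lemma \ref{classicdisp}, which is why dimension five enters). Then $h(t) - S$ satisfies $\partial_t(h - S) = H(t)(h-S) + (H(t)-H(\infty))S$, and since $\|H(t)-H(\infty)\| \lesssim |P(t)-P_\infty| \lesssim \int_t^\infty |\dot P| \lesssim (1+t)^{-\alpha}$, Duhamel together with the $L^1 \to L^\infty$ dispersive decay of $e^{itL}$ gives $\|h(t) - S\|_{L^\infty} \to 0$. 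Translating back by $X(t)$ yields $\|\beta(t) - \beta_\infty(\cdot - X(t))\|_\infty \to 0$.

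I expect the main obstacle to be Step 2, specifically the estimate of the single worst remainder term in the velocity equation. A direct stationary phase application on that term yields only $t^{-1/2}$ decay in dimension five, which is far from integrable; the fix requires (a) proving the stationary point of the coalesced phase is $O(1)$-close to the origin along the ballistic trajectory, uniformly in the bootstrap, and (b) combining this localization with the vanishing of $\widehat W$ near $\xi = 0$ to upgrade the decay past the integrability threshold. Making (a) and (b) quantitative, and checking that the required smallness is self-consistent across the bootstrap, is the technical heart of the argument; everything else is either the structural computation of Step 1 or the relatively standard dispersive estimate of Step 3.
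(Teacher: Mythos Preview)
Your proposal is correct and follows the paper's three-step strategy essentially verbatim. One point worth sharpening: in Step~2 the paper runs a \emph{two-component} bootstrap, assuming both $(1+t)^{1+\alpha}|\dot P(t)|\le M^{3+2n}$ and $(1+t)^{\alpha}(|P(t)|-1)_{+}\le M$; it is the second of these that is closed via the sign of the friction (one shows $\psi'(T)<0$ at the boundary $\psi(T)=M$), and it is also the second that feeds into the localization of the stationary point of $R_3$ near $\xi=0$, while the first bootstrap then closes from the smallness of $\rho_0$ together with the remainder estimates.
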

%The end theorem should look like given any velocity, there exists an epsilon such that for $\rho_0 \le \alpha $ there exists a delta such that ....
\begin{remark}
The smallness condition means that we require \begin{align*}
\Vert W \Vert', \vert \vert \vert \beta_0 \vert \vert \vert \ll \rho_0 \ll 1
\end{align*}
\end{remark}
\bigskip
\textbf{Acknowledgments:} The author thanks his PhD advisor Prof. Pierre Germain for suggesting this problem to him as well as for many very interesting discussions about it. He also thanks him for his helpful comments on an earlier version of this paper.

\section{Set-up} \label{setup}
First we write the equations in the frame of the moving particle: \\
Consider the new unknown 
\begin{displaymath}
h(x-X)= \left(
\begin{array}{ll}
\Re \beta \\
\Im \beta
\end{array} \right)
\end{displaymath}
The system \eqref{system} becomes:
\begin{equation}\label{systembis} 
\begin{cases}
\dot{X}(t) =  P(t)  \\
\dot{P}(t) =  \sqrt{{\rho}_0} \Re \Big \langle  
\left( \begin{array}{ll}
\nabla W \\
0 
\end{array} \right)
 , h \Big \rangle \\
\partial_t h = H(t) h - \sqrt{\rho_0} 
\left( \begin{array}{ll}
0 \\
W
\end{array} \right)
\end{cases}
\end{equation}
where 
\begin{displaymath}
H(t) := \left( \begin{array}{ll}
P(t) \cdot \nabla & - \Delta \\
\Delta - 1 & P(t) \cdot \nabla
\end{array} \right) 
=: P(t) \cdot \nabla ~I_{2} + H_0 
\end{displaymath}
where $I_2$ denotes the $2 \times 2$ identity matrix.

\subsection{Preliminaries}
We start by recalling a global well-posedness result for \eqref{system}, which is a straightforward adaptation to five space dimension of theorem 2.2 from \cite{E} (see appendix A of that same paper for a proof):
\begin{theorem}
Let $W \in \mathcal{P}.$ Let $P_0 \in \mathbb{R}^5$ and $\beta_0 \in \mathcal{E}.$ \\
Then the system \eqref{system} has a unique global solution 
\begin{align*}
X(t),P(t) \in \mathcal{C}([0;\infty); \mathbb{R}^5) \\
\beta \in \mathcal{C}([0; \infty); \mathcal{E})
\end{align*}
with initial condition $(P_0, \beta_0).$
\end{theorem}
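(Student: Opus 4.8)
The plan is to run a standard local-well-posedness argument followed by a global \emph{a priori} bound coming from the Hamiltonian. The key structural observation is that the field equation in \eqref{system} is \emph{linear} in $\beta$, so no Strichartz machinery is needed and the space dimension is irrelevant at this (energy) regularity; plain energy estimates suffice. Writing $h=(\Re\beta,\Im\beta)^{T}$ and separating real and imaginary parts, the field equation reads
\begin{displaymath}
\partial_t h = H_0 h - \sqrt{\rho_0}\left(\begin{array}{l} 0 \\ W^{X(t)}\end{array}\right),\qquad H_0 = \left(\begin{array}{cc} 0 & -\Delta \\ \Delta - 1 & 0\end{array}\right),
\end{displaymath}
and a direct integration by parts shows that $H_0$ is skew-symmetric for the inner product $\langle h_1,h_2\rangle_{E}:=\langle \Re h_1,\Re h_2\rangle_{H^1}+\langle \nabla\Im h_1,\nabla\Im h_2\rangle_{L^2}$ (equivalently, its Fourier symbol has purely imaginary eigenvalues $\pm i\phi_1(\xi)$, and this is the quadratic form it preserves). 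Hence $e^{tH_0}$ is a unitary group on $\mathcal{E}$, and the full system \eqref{system} is equivalent to the fixed-point problem
\begin{align*}
h(t) &= e^{tH_0}h_0 - \sqrt{\rho_0}\int_0^t e^{(t-s)H_0}\left(\begin{array}{l}0\\ W^{X(s)}\end{array}\right)ds, \\
P(t) &= P_0 + \sqrt{\rho_0}\int_0^t \Re\Big\langle \left(\begin{array}{l}\nabla W^{X(s)}\\ 0\end{array}\right), h(s)\Big\rangle\,ds, \\
X(t) &= X_0 + \int_0^t P(s)\,ds.
\end{align*}

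Since $W\in\mathcal{P}$ is smooth and rapidly decaying, the maps $X\mapsto W^X$ and $X\mapsto\nabla W^X$ are Lipschitz from $\mathbb{R}^5$ into every Sobolev space we need, with constant controlled by $\|W\|'$. Working in $C([0,T];\mathcal{E})\times C([0,T];\mathbb{R}^5)^{2}$, one checks that the map above is a contraction on a suitable ball once $T$ is small enough (depending only on $\|W\|'$, $|P_0|$, $\|\beta_0\|_{\mathcal{E}}$ and $\rho_0$): the propagator $e^{tH_0}$ is an isometry of $\mathcal{E}$, and every remaining term carries a factor $T$ and is Lipschitz in $(h,P,X)$ on bounded sets. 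This produces a unique maximal solution on some interval $[0,T^{*})$, together with the blow-up alternative: if $T^{*}<\infty$ then $\|h(t)\|_{\mathcal{E}}+|P(t)|\to\infty$ as $t\uparrow T^{*}$.

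To globalize, I would use conservation of the Hamiltonian \eqref{hamiltonian}. The identity $\frac{d}{dt}\mathcal{H}=0$ is first checked for smooth solutions and then extended to $\beta_0\in\mathcal{E}$ by approximation and continuous dependence. Estimating the cross term by Cauchy--Schwarz and Young's inequality, $\big|2\sqrt{\rho_0}\int W^X\Re\beta\big|\le \tfrac12\|\Re\beta\|_{L^2}^2+2\rho_0\|W\|_{L^2}^2$, we obtain
\begin{displaymath}
\tfrac12|P(t)|^2+\|\nabla\beta(t)\|_{L^2}^2+\tfrac12\|\Re\beta(t)\|_{L^2}^2 \;\le\; \mathcal{H}(0)+2\rho_0\|W\|_{L^2}^2 ,
\end{displaymath}
which is uniform in $t$. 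Since $\|h\|_{\mathcal{E}}^2\lesssim\|\nabla\beta\|_{L^2}^2+\|\Re\beta\|_{L^2}^2$, both $\|h(t)\|_{\mathcal{E}}$ and $|P(t)|$ stay bounded on $[0,T^{*})$, which forces $T^{*}=+\infty$. Continuity in time of $(X,P,\beta)$ is built into the fixed-point space, and uniqueness is part of the local statement (or follows from a Gronwall estimate on the difference of two solutions), which completes the argument.

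There is no genuine obstacle here: the statement is the five-dimensional transcription of Theorem 2.2 of \cite{E}. The only two points needing a little care are (i) identifying the energy inner product that makes $H_0$ skew-symmetric, so that $e^{tH_0}$ is an honest unitary group on the space $\mathcal{E}$ (which is inhomogeneous in $\Re\beta$ but homogeneous in $\Im\beta$), and (ii) justifying $\frac{d}{dt}\mathcal{H}=0$ at energy regularity rather than merely formally; both are routine.
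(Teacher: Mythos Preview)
Your proposal is correct and matches what the paper does: the paper does not actually prove this statement but simply records it as a straightforward five-dimensional adaptation of Theorem~2.2 in \cite{E}, referring to Appendix~A of that paper for the argument. The contraction-mapping local theory plus the Hamiltonian \emph{a priori} bound you outline is precisely the scheme used there, and your identification of the energy inner product making $H_0$ skew-symmetric (equivalently, the eigenvalues $\pm i\phi_1(\xi)$ of the symbol) is the right structural point.
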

As mentioned in the introduction, this system is Hamiltonian for 
\begin{displaymath}
\mathcal{H}(X, P, \beta, \nabla \beta) = \frac{P^2}{2} + \int_{\mathbb{R}^3} \vert \nabla \beta \vert^2 dx + \lambda \int_{\mathbb{R}^3} \vert \Re \beta \vert^2 dx + 2 \sqrt{\rho _0} \int_{\mathbb{R}^3} W^{X} \Re \beta dx
\end{displaymath}
Using this conservation law we easily obtain a uniform bound on the velocity of the particle, as well as $\Vert \Re \beta \Vert_{L^2}$ and $\Vert \nabla \Im \beta \Vert_{L^2}$ :
\begin{lemma}\label{easybound}
We have the bounds
\begin{align*}
\sup_{t \geqslant 0} \vert P(t) \vert & \leqslant \sqrt{2 \mathcal{H} + \sqrt{\rho_0} \Vert W \Vert_2 ^2} \\
\sup_{t \geqslant 0} \Vert \Re \beta \Vert_2 & \leqslant \sqrt{ \frac{1}{1-\sqrt{\rho_0}} \bigg(  \mathcal{H} + \sqrt{\rho_0} \Vert W \Vert_2 ^2 \bigg) } \\
\sup_{t \geqslant 0} \Vert \nabla \Im \beta \Vert_2 & \leqslant \sqrt{\mathcal{H} + \sqrt{\rho_0} \Vert W \Vert_2 ^2} 
\end{align*}
\end{lemma}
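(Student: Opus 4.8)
The plan is to exploit conservation of the Hamiltonian $\mathcal{H}$ together with a lower bound on $\mathcal{H}$ obtained by absorbing the (possibly negative) cross term $2\sqrt{\rho_0}\int W^X \Re\beta\,dx$ into the positive quadratic terms. Concretely, I would first observe that since $\mathcal{H}$ is conserved along the flow, we may evaluate the identity
\begin{displaymath}
\frac{P(t)^2}{2} + \int |\nabla\beta(t)|^2 + \int |\Re\beta(t)|^2 + 2\sqrt{\rho_0}\int W^{X(t)}\Re\beta(t) = \mathcal{H}
\end{displaymath}
at every time $t\geqslant 0$ (here $\lambda=1$ per the normalization). The only term without a definite sign is the last one, so the whole game is to control it.

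For that cross term I would apply Cauchy--Schwarz followed by Young's inequality: $2\sqrt{\rho_0}\big|\int W^{X(t)}\Re\beta(t)\big| \leqslant 2\sqrt{\rho_0}\,\|W\|_2\,\|\Re\beta(t)\|_2 \leqslant \sqrt{\rho_0}\,\|W\|_2^2 + \sqrt{\rho_0}\,\|\Re\beta(t)\|_2^2$, using translation invariance of the $L^2$ norm to replace $\|W^{X(t)}\|_2$ by $\|W\|_2$. Substituting this into the conservation identity gives, on one hand,
\begin{displaymath}
\frac{P(t)^2}{2} \leqslant \mathcal{H} + \sqrt{\rho_0}\,\|W\|_2^2,
\end{displaymath}
since all the $\beta$-dependent terms that remain, namely $\int|\nabla\beta|^2 + (1-\sqrt{\rho_0})\int|\Re\beta|^2$, are nonnegative once $\rho_0<1$; this yields the first bound. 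On the other hand, isolating the $\|\Re\beta(t)\|_2^2$ term gives $(1-\sqrt{\rho_0})\|\Re\beta(t)\|_2^2 \leqslant \mathcal{H} + \sqrt{\rho_0}\|W\|_2^2$ (dropping the nonnegative $P^2/2$ and $\int|\nabla\beta|^2$), which after dividing by $1-\sqrt{\rho_0}>0$ produces the second bound. Similarly, keeping $\int|\nabla\beta(t)|^2$ and discarding $P^2/2$ and $(1-\sqrt{\rho_0})\|\Re\beta\|_2^2$ gives the third bound $\|\nabla\Im\beta(t)\|_2^2 = \int|\nabla\beta(t)|^2 \leqslant \mathcal{H}+\sqrt{\rho_0}\|W\|_2^2$, noting $\int|\nabla\beta|^2 = \|\nabla\Re\beta\|_2^2 + \|\nabla\Im\beta\|_2^2 \geqslant \|\nabla\Im\beta\|_2^2$. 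Taking suprema over $t\geqslant 0$ and square roots finishes the proof.

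There is no real obstacle here — the argument is a textbook coercivity/energy estimate — but the one point requiring a little care is making sure $\rho_0$ is small enough (in particular $\rho_0 < 1$, which is guaranteed by the hypothesis $\rho_0 < \varepsilon_0 < 1$ of Theorem \ref{main}) so that the coefficient $1-\sqrt{\rho_0}$ is strictly positive and the quadratic form $\int|\nabla\beta|^2 + (1-\sqrt{\rho_0})\int|\Re\beta|^2$ stays nonnegative; this is what lets us drop the right terms with the right signs at each stage. One should also be mindful that $\mathcal{H}$ here denotes the (finite) value of the Hamiltonian on the initial data, which is finite precisely because $\beta_0 \in \mathcal{E}$.
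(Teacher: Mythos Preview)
Your proposal is correct and follows essentially the same approach as the paper: conservation of $\mathcal{H}$, absorption of the cross term via Cauchy--Schwarz and Young's inequality $2\sqrt{\rho_0}\big|\int W^{X}\Re\beta\big| \leqslant \sqrt{\rho_0}\|W\|_2^2 + \sqrt{\rho_0}\|\Re\beta\|_2^2$, and then discarding the remaining nonnegative terms to isolate each of $|P|^2$, $\|\Re\beta\|_2^2$, and $\|\nabla\Im\beta\|_2^2$. The paper's proof is slightly terser but the logic is identical.
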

\begin{proof}
By conservation of the Hamiltonian, we have
\begin{align*}
\mathcal{H} &= \frac{\vert P \vert^2}{2} + \int_{\mathbb{R}^3} \vert \nabla \beta \vert^2 dx + \int_{\mathbb{R}^3} \vert \Re \beta \vert^2 dx + 2 \sqrt{\rho_0} \int_{\mathbb{R}^3} W^X \Re \beta dx \\
& \geqslant \frac{\vert P \vert^2}{2} + (1-\sqrt{\rho_0}) \int_{\mathbb{R}^3} \vert \Re \beta \vert^2 dx -  \sqrt{\rho_0} \Vert W \Vert_2 ^2 + \int_{\mathbb{R}^5} \vert \nabla \Im \beta \vert^2 dx
\end{align*}
Hence 
\begin{align*}
\vert P \vert^2 & \leqslant 2 \mathcal{H} + \sqrt{\rho_0} \Vert W \Vert_2 ^2 \\
\Vert \Re \beta \Vert_2 ^2 & \leqslant \frac{1}{1-\sqrt{\rho_0}} \bigg(  \mathcal{H} + \sqrt{\rho_0} \Vert W \Vert_2 ^2 \bigg) \\
\Vert \nabla \Im \beta \Vert_2 ^2 & \leqslant  \mathcal{H} + \sqrt{\rho_0} \Vert W \Vert_2 ^2 
\end{align*}
We deduce from these three inequalities the desired bounds. 
\end{proof} 
Now we notice that we can actually improve the regularity of the velocity function, and prove that it is $\mathcal{C}^1:$
\begin{corollary}\label{C1}
Consider $(P(t),\beta(t))$ the unique solution given by the previous theorem. \\
Then we have that $P \in \mathcal{C}^1([0;\infty); \mathbb{R}^5)$
\end{corollary}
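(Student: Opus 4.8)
The plan is to bootstrap the regularity of $P$ from the regularity of $\beta$ using the evolution equation for the velocity in \eqref{systembis}, namely
\begin{align*}
\dot{P}(t) = \sqrt{\rho_0}\,\Re\Big\langle \left(\begin{array}{ll}\nabla W \\ 0\end{array}\right), h(t)\Big\rangle = \sqrt{\rho_0}\,\Re\int_{\mathbb{R}^5} \nabla W(X(t)-x)\,\Re\beta(t,x)\,dx.
\end{align*}
Since the global well-posedness theorem gives $\beta \in \mathcal{C}([0;\infty);\mathcal{E})$ and $X,P \in \mathcal{C}([0;\infty);\mathbb{R}^5)$, the right-hand side is manifestly continuous in $t$: the map $t \mapsto \Re\beta(t)$ is continuous into $L^2$ (indeed into $H^1$) by the energy-space continuity, the map $t \mapsto \nabla W^{X(t)}$ is continuous into $L^2$ because $X$ is continuous and $W \in \mathcal{P}$ is smooth and localized (so translation is strongly continuous on $L^2$ and $\nabla W \in L^2$), and the $L^2$ pairing is jointly continuous. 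Therefore $\dot{P}$, which a priori exists only in an integrated/weak sense, is in fact equal almost everywhere to a continuous function of $t$.

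The key step is then the following elementary observation: from the integral form of the equation, $P(t) = P_0 + \int_0^t \dot P(s)\,ds$ with the integrand continuous, the fundamental theorem of calculus upgrades $P$ to a genuinely $\mathcal{C}^1$ function with $P'(t) = \sqrt{\rho_0}\,\Re\langle (\nabla W,0)^{T}, h(t)\rangle$ pointwise for every $t \geq 0$ (one-sided derivative at $t=0$). So the proof is: (i) write $P$ in Duhamel/integrated form; (ii) check that the integrand $\dot P(\cdot)$ is continuous, using continuity of $X$ in $\mathbb{R}^5$, continuity of $\Re\beta$ in $L^2$, strong continuity of translations acting on the fixed $L^2$ function $\nabla W$, and Cauchy–Schwarz to control the pairing; (iii) conclude by the fundamental theorem of calculus. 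One may also note for later use the uniform bound $|\dot P(t)| \leq \sqrt{\rho_0}\,\|\nabla W\|_{L^2}\,\|\Re\beta(t)\|_{L^2} \leq \sqrt{\rho_0}\,\|\nabla W\|_{L^2}\,\sup_t\|\Re\beta\|_{L^2}$, which is finite by Lemma \ref{easybound}; this makes $P$ Lipschitz, though $\mathcal{C}^1$ is the stated (stronger) conclusion.

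The only point requiring the slightest care — the ``main obstacle,'' such as it is — is the continuity of $t \mapsto \nabla W^{X(t)}$ in $L^2(\mathbb{R}^5)$: this follows from the fact that $X$ is continuous together with the strong $L^2$-continuity of the translation group, which applies since $\nabla W \in L^2$ (indeed $W \in \textbf{W}^{4,1} \cap H^4$ by the definition of $\mathcal{P}_n$, so $\nabla W \in L^2$). Combined with the continuity of $\Re\beta$ into $L^2$ and the bilinearity of the inner product, joint continuity of the pairing is immediate, and no further estimates are needed. Everything else is routine, so the corollary follows in a few lines.
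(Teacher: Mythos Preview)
Your argument is correct and in fact more elementary than the paper's. You work in the original frame and observe that $\dot P(t)=\sqrt{\rho_0}\,\langle \nabla W^{X(t)},\Re\beta(t)\rangle$ is a pairing of two $L^2$-valued maps, each continuous in $t$ (the first by strong continuity of translations applied to the fixed $\nabla W\in L^2$ composed with the continuous path $X$, the second by $\beta\in\mathcal C([0,\infty);\mathcal E)$), so $\dot P$ is continuous and $P\in\mathcal C^1$ by the fundamental theorem of calculus.

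The paper instead works in the moving frame, where the test function $\nabla W$ is fixed and only $h$ varies, and differentiates in time using the field equation $\partial_t h=H(t)h-\sqrt{\rho_0}(0,W)^T$. After integrating by parts the derivatives in $H(t)$ onto $W$ and applying Cauchy--Schwarz together with the energy bounds of Lemma~\ref{easybound}, this yields the quantitative Lipschitz estimate $|\dot P(t')-\dot P(t)|\leq C|t'-t|$. So the paper proves a strictly stronger conclusion (Lipschitz continuity of $\dot P$, with an explicit constant depending on $\Vert W\Vert'$, $\sup_t|P|$, and the conserved energy), at the cost of invoking the time-differentiability of $h$ and the structure of $H(t)$. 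Your soft argument avoids all of this and suffices for the corollary as stated; the paper's approach buys a Lipschitz bound that, while not visibly used later, is the sort of quantitative control one often wants in a bootstrap.
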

\begin{proof}
We have, for $t,t' \geqslant 0$ that
\begin{eqnarray*}
\big \vert \dot{P}(t') - \dot{P}(t) \big \vert &=& \bigg \vert \int_{t} ^{t'} \Big \langle 
\left( \begin{array}{cc}
\nabla W \\
0
\end{array} \right) , \partial_t h \Big \rangle ds \bigg \vert \\
&\leqslant & \int_{t} ^{t'} \bigg \vert \Big \langle 
\left( \begin{array}{cc}
\nabla W \\
0
\end{array} \right) , H(t) h \Big \rangle \bigg \vert ds \\ %no right-hand side due to the orithogonality
& \leqslant & \vert t-t' \vert \sup_{t} \bigg \vert \langle \partial_i W , P(t) \cdot \nabla \Re h - \Delta \Im h \rangle \bigg \vert  
\end{eqnarray*}
Now we use integration by parts to place derivatives on $W,$ as well as the Cauchy-Schwarz inequality to obtain:
\begin{align*}
\vert \dot{P}(t) - \dot{P}(t') \vert & \leqslant \vert t-t' \vert \bigg( \sup_t \bigg \vert \langle P(t) \cdot \nabla \partial_i W , \Re h \rangle \bigg \vert + \sup_t \bigg \vert \Delta \partial_i W , \Im h \rangle \bigg \vert \bigg)
\\ 
                                     & \leqslant 2 \vert t-t' \vert \sup_{t} \big( \vert P(t) \vert \big) \Vert W \Vert' \big( \Vert \Re \beta \Vert_{L^2} + \Vert \nabla \Im \beta \Vert_{L^2} \big)
\end{align*} 
and we conclude using Lemma \ref{easybound}.
\end{proof}

\subsection{Recasting the equation in a favorable form} 
In this subsection we derive a new evolution equation for the particle.\\
Since $h$ satisfies an equation with a forcing term (see \eqref{systembis}), it would be natural to write it as
\begin{align*}
h(t) = H^{-1} (t) \bigg( \begin{array}{ll}
0 \\
W
\end{array} \bigg)
+ \delta(t)
\end{align*}
where $H^{-1}(t)$ is understood in Fourier space as 
\begin{displaymath} P.V. \left( \begin{array}{cc}
i P(t) \cdot \xi      &     \vert \xi \vert^2 \\
-1-\vert \xi \vert^2 & i P(t) \cdot \xi
\end{array} \right)^{-1}
\end{displaymath}
where $P.V.$ stands for principal value. \\
The first part would stand for the soliton part, and the $\delta$ for the radiative part. We would then plug this ansatz back into the equation. This is the strategy followed by Fr\"{o}hlich and Gang in \cite{Fro2}.
\\
However we would then have to differentiate the soliton term in time which makes a singularity appear (formally the derivative of $H^{-1} (t)$ is $\dot{H}(t) H^{-2} (t)$). Fr\"{o}hlich and Gang are able to tackle this issue by having a very precise control over the position of the particle, hence the need to be in a very specific regime. \\
\\
In this paper we take an approach that does not introduce a singularity like $H^{-2} (t),$ but retains the main physical feature of the system, namely the friction. The next lemma, which constitutes the main result of this section, decomposes the acceleration of the particle into three main parts: the friction term (which is the only term explicitely given in the formula below), the remainder terms (denoted $R_1, R_2, R_3, R_4$) and the regularization terms ($E_{1,\epsilon}, E_{2,\epsilon}$). We will show later on that the friction term is the leading term in the equation. More precisely we will prove that the remainder terms have integrable decay in time. The regularization terms are only there for technical reasons, namely to ensure that principal values are well-defined and that our computations are rigorous.
\\
Another favorable aspect of the equation we give below is that it does not involve that evolution of the field. In a sense we decoupled the evolution of the particle from that of the field, which allows us to analyse the two independently.
\\
The exact equation is given in the following lemma:
\begin{lemma}\label{reform}
Let $\epsilon >0.$ \\
We have
\begin{align*}
\dot{P}(t) &= \rho_0 \Re \Big \langle \Bigg( \begin{array}{c}
\nabla_x W \\
0
\end{array} \Bigg) , H^{-1}_{\epsilon} (t) \Bigg( \begin{array}{c}
0 \\
W
\end{array} \Bigg) \Big \rangle + R_1 + R_2 + R_3  + R_4 + E_{1,\epsilon} + E_{2,\epsilon}
\end{align*}
where 
\begin{align*}
H_{\epsilon} (t) = \Bigg( \begin{array}{cc}
P(t) \cdot \nabla - \epsilon & - \Delta \\
\Delta -1 & P(t) \cdot \nabla_x - \epsilon
\end{array} \Bigg) = H_0 + (P(t) \cdot \nabla_x - \epsilon) I_2
\end{align*}
and 
\begin{eqnarray*}
R_1 &=& \frac{\sqrt{\rho_0}}{2(2\pi)^5} \Re \Big \langle i \xi_j \widehat{W} , \widehat{ \Re \beta_0} D_1 ^1 + \frac{\widehat{\Im \beta_0} \vert \xi \vert}{i \sqrt{1+\vert \xi \vert ^2}} D_1 ^1 +  \widehat{\Re \beta_0} D_2 ^1- \frac{\widehat{\Im \beta_0} D_2 ^I \vert \xi \vert}{i \sqrt{1+\vert \xi \vert^2}} \Big \rangle
\end{eqnarray*} 
With
\begin{eqnarray*}
D_1 ^1 &=& \frac{(i\phi_1 (\xi) + i P(0) \cdot \xi )e^{it \phi_1 (\xi) + i (X(t)-X(0)) \cdot \xi}}{i\phi_1(\xi)+i P(t) \cdot \xi-\epsilon} \\
D_2 ^1 &=& \frac{(-\phi_1 (\xi) + i P(0) \cdot \xi )e^{-it \phi_1 (\xi) + i (X(t)-X(0)) \cdot \xi}}{-\phi_1(\xi)+i P(t) \cdot \xi - \epsilon}
\end{eqnarray*}
where $\phi_1(\xi) = \vert \xi \vert \sqrt{1+\vert \xi \vert^2}$\\
and
\begin{eqnarray*}
E_{1,\epsilon} &=& \frac{\sqrt{\rho_0}}{2(2\pi)^5} \Re \Big \langle i \xi_j \widehat{W} , \widehat{\Re \beta_0} D_1 ^{1,\epsilon} + \frac{\widehat{\Im \beta_0} \vert \xi \vert}{i \sqrt{1+\vert \xi \vert ^2}} D_1 ^{1,\epsilon} +  \mathcal{F} \Re \beta_0 D_2 ^{1,\epsilon}- \frac{\mathcal{F} \Im \beta_0 D_2 ^{1,\epsilon} \vert \xi \vert}{i \sqrt{1+\vert \xi \vert^2}} \Big \rangle
\end{eqnarray*} 
With
\begin{eqnarray*}
D_1 ^{1,\epsilon} &=& -\epsilon\frac{ e^{it \phi_1 (\xi) + i (X(t)-X(0)) \cdot \xi}}{i\phi_1(\xi)+i P(t) \cdot \xi-\epsilon} \\
D_2 ^{1,\epsilon} &=& -\epsilon\frac{ e^{-it \phi_1 (\xi) + i (X(t)-X(0)) \cdot \xi}}{-\phi_1(\xi)+i P(t) \cdot \xi - \epsilon}
\end{eqnarray*}
and
\begin{eqnarray*}
R_2 &=& \frac{\sqrt{\rho_0}}{2(2 \pi)^5} \Re \Big \langle i \xi_j \widehat{W} , \widehat{\Re h_0} D_1 ^{2} + \frac{\widehat{\Im h_0} \vert \xi \vert}{i \sqrt{1+\vert \xi \vert ^2}} D_1 ^{2} +  \widehat{\Re h_0} D_2 ^{2}- \frac{\widehat{\Im h_0} D_2 ^{2} \vert \xi \vert}{i \sqrt{ 1+ \vert \xi \vert^2}} \Big \rangle
\end{eqnarray*} 
With
\begin{eqnarray*}
D_1 ^{2} &=& \frac{e^{it \phi_1 (\xi) + i (X(t)-X(0)) \cdot \xi}}{i\phi_1(\xi)+i P(t) \cdot \xi- \epsilon} i \int_0 ^t \dot{P} (s) \cdot \xi ds  \\
D_2 ^{2} &=&  \frac{e^{-it \phi_1 (\xi) + i (X(t)-X(0)) \cdot \xi}}{-i\phi_1(\xi)+i P(t) \cdot \xi-\epsilon} i \int_0 ^t \dot{P} (s) \cdot \xi ds
\end{eqnarray*}
and 
\begin{eqnarray*}
R_3 &=& - \frac{\rho_0}{2(2\pi)^5} \Re \Big \langle i \xi_j \widehat{W} ,  \frac{\widehat{W} \vert \xi \vert}{i \sqrt{1+\vert \xi \vert ^2}} D_1 ^{3}  - \frac{\widehat{W} D_2 ^{3} \vert \xi \vert}{i \sqrt{1+\vert \xi \vert^2}} \Big \rangle
\end{eqnarray*} 
With
\begin{eqnarray*}
D_1 ^{3} &=& \frac{e^{it \phi_1 (\xi) + i (X(t)-X(0)) \cdot \xi}}{i\phi_1(\xi)+i P(t) \cdot \xi-\epsilon} i \int_0 ^t \int_0 ^s \dot{P} (s) \cdot \xi e^{-i \tau \phi_1 (\xi) - i (X(\tau)-X(0)) \cdot \xi} ds d\tau \\
D_2 ^{3} &=& \frac{e^{-it \phi_1 (\xi) + i (X(t)-X(0)) \cdot \xi}}{-i\phi_1(\xi)+i P(t) \cdot \xi - \epsilon} i \int_0 ^t \int_0 ^s \dot{P} (s) \cdot \xi e^{i \tau \phi_1 (\xi) - i (X(\tau)-X(0)) \cdot \xi} ds d\tau
\end{eqnarray*}
and 
\begin{eqnarray*}
R_4 &=&  \frac{\rho_0}{2(2\pi)^5} \Re \Big \langle i \xi_j \widehat{W} , \frac{\widehat{W} \vert \xi \vert}{i \sqrt{1+\vert \xi \vert^2}} D_1 ^4 - \frac{\widehat{W} D_2 ^4 \vert \xi \vert}{i \sqrt{1+ \vert \xi \vert^2}} \Big \rangle
\end{eqnarray*} 
With
\begin{eqnarray*}
D_1 ^{4} &=& \frac{e^{it \phi_1 (\xi) + i (X(t)-X(0)) \cdot \xi}}{i\phi_1(\xi)+i P(t) \cdot \xi-\epsilon} \\
D_2 ^{4} &=& \frac{e^{-it \phi_1 (\xi) + i (X(t)-X(0)) \cdot \xi}}{-i\phi_1(\xi)+i P(t) \cdot \xi-\epsilon}
\end{eqnarray*}
and 
\begin{eqnarray*}
E_{2,\epsilon} &=&  \frac{\rho_0}{2(2\pi)^5} \Re \Big \langle i \xi_j \widehat{W} , \frac{\widehat{W} \vert \xi \vert}{i \sqrt{1+\vert \xi \vert^2}} D_1 ^{2,\epsilon} - \frac{\widehat{W} D_2 ^{2,\epsilon} \vert \xi \vert}{i \sqrt{1+\vert \xi \vert^2}} \Big \rangle
\end{eqnarray*} 
With
\begin{eqnarray*}
D_1 ^{2,\epsilon} &=& - \epsilon \frac{\int_0 ^t e^{i(X(t)-X(s)) \cdot \xi + (t-s) \phi_1 (\xi)}ds}{i\phi_1(\xi)+i P(t) \cdot \xi - \epsilon} \\
D_2 ^{2,\epsilon} &=&  - \epsilon\frac{\int_0 ^t e^{i(X(t)-X(s)) \cdot \xi + (t-s) \phi_1 (\xi)}ds}{-i\phi_1(\xi)+i P(t) \cdot \xi- \epsilon}
\end{eqnarray*}
\end{lemma}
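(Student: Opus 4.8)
The plan is to apply Duhamel's formula to the equation for $h$ and then rearrange the resulting expression so that the friction term $\rho_0\Re\langle(\nabla W,0)^\top,H_\epsilon^{-1}(t)(0,W)^\top\rangle$ is singled out, the leftover pieces being $R_1,\dots ,R_4,E_{1,\epsilon},E_{2,\epsilon}$. First I would make the propagator of $H(t)$ explicit. Since the operators $P(\sigma)\cdot\nabla\, I_2$ (for various $\sigma$) and $H_0$ are all Fourier multipliers they commute, so the propagator of $H(t)=P(t)\cdot\nabla\,I_2+H_0$ from time $s$ to time $t$ is $\mathcal U(t,s)=e^{(X(t)-X(s))\cdot\nabla}\,e^{(t-s)H_0}$, which on the Fourier side is multiplication by $e^{i(X(t)-X(s))\cdot\xi}$ followed by the matrix $e^{(t-s)\widehat{H_0}(\xi)}$, and because $\widehat{H_0}(\xi)^2=-\phi_1(\xi)^2I_2$ this matrix equals $\cos\!\big((t-s)\phi_1(\xi)\big)I_2+\phi_1(\xi)^{-1}\sin\!\big((t-s)\phi_1(\xi)\big)\widehat{H_0}(\xi)$; its off-diagonal symbol $|\xi|^2/\phi_1(\xi)=|\xi|/\sqrt{1+|\xi|^2}$ is the factor appearing throughout the statement. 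Duhamel's formula, valid in $\mathcal E$ by the well-posedness result recalled above, reads $h(t)=\mathcal U(t,0)h(0)-\sqrt{\rho_0}\int_0^t\mathcal U(t,s)(0,W)^\top\,ds$; I would substitute this into $\dot P(t)=\sqrt{\rho_0}\Re\langle(\nabla W,0)^\top,h(t)\rangle$, pass to frequency by Plancherel's theorem, and expand the cosines and sines into the exponentials $e^{\pm it\phi_1}$ (initial-datum part) and $e^{\pm i(t-s)\phi_1}$ (Duhamel part).

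The core step is a purely algebraic manipulation whose point is to make $H_\epsilon^{-1}(t)$ appear. Write $\lambda_\pm(t,\xi):=\pm i\phi_1(\xi)+iP(t)\cdot\xi-\epsilon$ for the two $\epsilon$-shifted characteristic roots of $H(t)$; then $\lambda_+\lambda_-=\det\widehat{H_\epsilon(t)}(\xi)$ and $\lambda_+-\lambda_-=2i\phi_1$. For the Duhamel part, with $g_\pm(s):=e^{i(X(t)-X(s))\cdot\xi\pm i(t-s)\phi_1(\xi)}$ one has $\partial_s g_\pm=\big(-\lambda_\pm(t,\xi)+i(P(t)-P(s))\cdot\xi-\epsilon\big)g_\pm$, so integrating the $\partial_s g_\pm$ term yields
\[
\lambda_\pm\!\int_0^t g_\pm(s)\,ds=e^{\pm it\phi_1+i(X(t)-X(0))\cdot\xi}-1+i\!\int_0^t(P(t)-P(s))\cdot\xi\,g_\pm(s)\,ds-\epsilon\!\int_0^t g_\pm(s)\,ds .
\]
Dividing by $\lambda_\pm$ produces four pieces: the $s=t$ boundary piece $-1/\lambda_\pm$, once the $+$ and $-$ contributions are recombined (using $\tfrac1{2i}\big(\lambda_-^{-1}-\lambda_+^{-1}\big)=\phi_1/(\lambda_+\lambda_-)$ and $\phi_1(\xi)\,|\xi|/\sqrt{1+|\xi|^2}=|\xi|^2$), is exactly the first component of $\widehat{H_\epsilon^{-1}(t)}(0,W)^\top$ and hence gives the friction term; the $s=0$ boundary piece gives $R_4$; the piece carrying $P(t)-P(s)=\int_s^t\dot P$ gives $R_3$ after writing $g_\pm(s)=g_\pm(0)\,e^{-i(X(s)-X(0))\cdot\xi\mp is\phi_1}$ and applying Fubini; and the $\epsilon$-piece gives $E_{2,\epsilon}$. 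For the initial-datum part I would not integrate by parts but use the elementary identity $\lambda_\pm(t,\xi)=\big(\pm i\phi_1(\xi)+iP(0)\cdot\xi\big)+i\big(\int_0^t\dot P(s)\,ds\big)\cdot\xi-\epsilon$ to split $e^{\pm it\phi_1+i(X(t)-X(0))\cdot\xi}$ into three terms, which are respectively $R_1$ (numerator $\pm i\phi_1+iP(0)\cdot\xi$), $R_2$ (numerator $i\int_0^t\dot P\cdot\xi$) and $E_{1,\epsilon}$ (numerator $-\epsilon$); the cosine/sine split of $h(0)=(\Re\beta_0,\Im\beta_0)^\top$ produces precisely the combinations $\widehat{\Re\beta_0}\pm\widehat{\Im\beta_0}\,|\xi|/(i\sqrt{1+|\xi|^2})$ occurring in the statement. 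Collecting all the pieces gives the claimed identity.

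The remaining work is to make these formal manipulations rigorous, and this — together with the sheer bookkeeping of keeping the many terms and their characteristic-root denominators organised so that the recombination of the $\pm\phi_1$ boundary terms collapses cleanly to the friction term — is the main obstacle, though no single deep estimate is involved. Since $\epsilon>0$ one has $|\lambda_\pm(t,\xi)|\ge\epsilon$, so $H_\epsilon^{-1}(t)$ is a genuine bounded Fourier multiplier (no principal values are needed) with symbol controlled by $\epsilon^{-2}$ times a polynomial weight; the uniform bounds $\sup_t|P(t)|<\infty$, $\sup_t\|\Re\beta\|_{L^2}<\infty$, $\sup_t\|\nabla\Im\beta\|_{L^2}<\infty$ of Lemma \ref{easybound}, the $\mathcal C^1$ regularity of $P$ from Corollary \ref{C1}, and the smoothness and decay of $\widehat W$ and $\widehat{\beta_0}$ built into $W\in\mathcal P_n$ and the finiteness of $\vert\vert\vert\beta_0\vert\vert\vert$, make every integrand absolutely integrable in $\xi$ and in $s$, so that Plancherel's theorem, the interchange of the $d\xi$- and $ds$-integrations, and the integration of $\partial_s g_\pm$ are all legitimate. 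The only genuine analytic input beyond the bookkeeping is the Fubini step used to reorganise the double time integral in $R_3$, which is where the $\mathcal C^1$-regularity of the velocity enters.
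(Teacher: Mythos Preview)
Your argument is correct and arrives at exactly the same decomposition as the paper, via what are at bottom the same identities; only the packaging differs. The paper introduces the auxiliary variable $\eta(t)=H_\epsilon(t)h(t)$, derives its evolution equation $\partial_t\eta=(\dot P\cdot\nabla)h+H(t)\eta-\sqrt{\rho_0}\,H_\epsilon(t)(0,W)^\top$, applies Duhamel to $\eta$, substitutes the Duhamel formula for $h$ into the $(\dot P\cdot\nabla)h$ term, and finally recognises the $\int_0^t e^{R(t)-R(s)}H_\epsilon(s)\,ds$ piece as a total $s$-derivative (plus an $\epsilon$-remainder) to extract the friction term and $R_4$. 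You instead stay with $h$, write the Duhamel integral directly on the Fourier side, and produce the factor $1/\lambda_\pm(t)$ by the integration-by-parts identity for $\partial_s g_\pm$; your split of the initial-datum contribution via $\lambda_\pm(t)=(\pm i\phi_1+iP(0)\cdot\xi)+i\!\int_0^t\dot P\cdot\xi-\epsilon$ is exactly the paper's observation that $H_\epsilon^{-1}(t)e^{R(t)}\eta_0$ together with the $h_0$-part of the $(\dot P\cdot\nabla)h$ term accounts for $R_1+R_2+E_{1,\epsilon}$, and your $(P(t)-P(s))\cdot\xi$ term, after Fubini, reproduces the paper's $R_3$. Your route is arguably a bit more direct since it avoids introducing $\eta$; the paper's route makes the conceptual point (work with $H_\epsilon(t)h$ rather than $h$) more visible and uses the diagonalisation Lemma~\ref{diag} explicitly instead of your cosine/sine expansion of $e^{(t-s)H_0}$, but the two are equivalent.
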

\begin{remark}
There are only principal value type singularities present in this formulation.
\end{remark}
\begin{proof}
We start by writing down the expression of $h$ given by the Duhamel formula: 
\begin{eqnarray}\label{Duhamel}
h(t) = e^{R(t)} h_0 - \sqrt{\rho_0} \int _0 ^t e^{R(t)-R(s)} \left( \begin{array}{ll}
0 \\
W
\end{array} \right) ds
\end{eqnarray}
with 
\begin{displaymath}
R(t) = \int _0 ^t H(s) ds
\end{displaymath}
Now notice that $H_{\epsilon} (t)$ can be diagonalized using Lemma \ref{diag} from the appendix: 
\begin{align*}
H_{\epsilon} (t) = A \Bigg( \begin{array}{cc}
iL + P(t) \cdot \nabla - \epsilon & 0 \\
0 & -iL + P(t) \cdot \nabla - \epsilon
\end{array}
 \Bigg) A^{-1}
\end{align*}
Looking at the Fourier transform of this matrix, we see that it is invertible and that its inverse is defined in Fourier space by
\begin{align*}
\mathcal{F} H^{-1} _{\epsilon} (t) = \mathcal{F} A \Bigg( \begin{array}{cc}
\frac{1}{i \vert \xi \vert \sqrt{1+\vert \xi \vert^2} + i \xi \cdot P(t) - \epsilon} & 0 \\
0 & \frac{1}{-i \vert \xi \vert \sqrt{1+\vert \xi \vert^2} + i \xi \cdot P(t) - \epsilon}
\end{array}
\Bigg) \mathcal{F}A^{-1}
\end{align*}
Now we turn to the equation satisfied by the velocity of the particle. We write that
\begin{align}
\notag \dot{P}(t) & =  \sqrt{\rho _0} \Re \Big \langle \left( \begin{array}{ll}
\nabla W \\
0
\end{array} \right), h(t) \Big \rangle \\
\label{pdot}&= \sqrt{\rho _0} \Re \Big \langle \left( \begin{array}{ll}
\nabla W \\
0
\end{array} \right),H_{\epsilon}^{-1}(t) H_{\epsilon}(t) h(t) \Big \rangle
\end{align}
Now we derive an equation for $\eta(t) = H_{\epsilon}(t) h(t): $ 
\begin{eqnarray*}
\partial_t \eta &=& (\dot{P} \cdot \nabla)  h(t) + H_{\epsilon}(t) \partial _t h \\
&=&   (\dot{P} \cdot \nabla)  h(t) + H(t) \eta -\sqrt{\rho _0} H_{\epsilon}(t) \left( \begin{array}{ll}
0 \\
W
\end{array} \right) 
\end{eqnarray*}
Using Duhamel's formula and \eqref{Duhamel} we get: 
\begin{eqnarray*}
\eta(t) &=& e^{R(t)} \eta _0 + \int_0 ^t e^{R(t)- R(s)} (\dot{P}(s) \cdot \nabla) h(s) ds - \sqrt{\rho _0} \int _0 ^t e^{R(t)-R(s)} H_{\epsilon}(s) \left( \begin{array}{ll}
0 \\
W
\end{array} \right) ds \\
&=& e^{R(t)} \eta _0 + \int _0 ^t e^{R(t)-R(s)} (\dot{P}(s) \cdot \nabla ) \Big[ e^{R(s)} h_0 - \sqrt{\rho _0} \int _0 ^s e^{R(s)-R(\tau)} \left( \begin{array}{ll}
0 \\
W
\end{array} \right) d \tau \Big] ds  \\
&-& \sqrt{\rho _0} \int _0 ^t e^{R(t) -R(s)} H_{\epsilon}(s) \left( \begin{array}{ll}
0 \\
W
\end{array} \right) ds 
\end{eqnarray*}
Now we plug this expression back into \eqref{pdot}:
\begin{eqnarray*}
\dot{P}(t) &=& \sqrt{\rho _0} \Re \Big \langle \left( \begin{array}{ll}
\nabla W \\
0
\end{array} \right) , H_{\epsilon} ^{-1} (t) e^{R(t)} \eta _0 \Big \rangle \\
&+&  \sqrt{\rho _0} \Re \Big \langle \left( \begin{array}{ll}
\nabla W \\
0
\end{array} \right) , H_{\epsilon}^{-1} (t) \int _0 ^t e^{R(t) -R(s)} (\dot{P}(s) \cdot \nabla)e^{R(s)} h_0 ds \Big \rangle 
\\
&-& \rho _0 \Re \Big \langle \left( \begin{array}{ll}
\nabla W \\
0
\end{array} \right) , H_{\epsilon}^{-1} (t) \int _0 ^t e^{R(t) -R(s)} (\dot{P}(s) \cdot \nabla) \int _0 ^s e^{R(s) -R(\tau)} \left( \begin{array}{ll}
0 \\
W
\end{array} \right) d \tau ds \Big \rangle 
\\
&-& \rho_0 \Re \Big \langle \left( \begin{array}{ll}
\nabla W \\
0
\end{array} \right)  , \int _0 ^t e^{R(t)-R(s)} H_{\epsilon}(s) ds~  H_{\epsilon}^{-1}(t) \left( \begin{array}{ll}
0 \\
W
\end{array} \right) \Big \rangle \\
&:=& I + R_2 + R_3 + IV
\end{eqnarray*}
Now we can diagonalize these terms using Lemma \ref{diag} from the appendix:
\\
\textit{Terms I, $R_2$, $R_3$:} These terms have the following form
\begin{eqnarray}\label{usefuldiag}
\Re \Big \langle  \left( \begin{array}{ll}
\nabla_x W \\
0 
\end{array} \right)
, A \left( \begin{array}{ll}
D_1 & 0 \\
0 & D_2 
\end{array} \right)
 A^{-1} \left( \begin{array}{ll}
a \\b
\end{array} \right)
\Big \rangle  \\
\notag= \frac{1}{2} \Re \Big \langle \nabla_x W , a D_1 + \frac{b \sqrt{-\Delta}}{i \sqrt{1-\Delta}} D_1 + a D_2- \frac{b D_2 \sqrt{-\Delta}}{i \sqrt{1-\Delta}} \Big \rangle
\end{eqnarray}
The formulas in the lemma follow directly from this. Note that $I = R_1+E_{1,\epsilon}$ where $E_{1,\epsilon}$ is the part that has size $\epsilon.$ \\
\textit{Term IV:} First we notice that 
\begin{align*}
& \mathcal{F} \Bigg( \int_0 ^t e^{R(t)-R(s)} H_{\epsilon}(s) ds \Bigg) \\
& = \mathcal{F}(A) \Bigg( \begin{array}{cc}
Z  &    0    \\
0    &   Z'
\end{array} \Bigg) \mathcal{F}(A^{-1})
\end{align*}
with
\begin{align*}
Z=\int_0 ^t  e^{i(X(t)-X(s)) \cdot \xi + (t-s) \phi_1 (\xi)} (i P(s) \cdot \xi + \phi_1 (\xi)-\epsilon)  ds
\end{align*}
and 
\begin{align*}
Z'=\int_0 ^t  e^{i(X(t)-X(s)) \cdot \xi - (t-s) \phi_1 (\xi)} (i P(s) \cdot \xi - \phi_1 (\xi)-\epsilon)  ds
\end{align*}

Now notice that
\begin{align*}
\int_0 ^t  e^{i(X(t)-X(s)) \cdot \xi + (t-s) \phi_1 (\xi)} (i P(s) \cdot \xi + \phi_1 (\xi))  ds &= \int_0 ^t -\frac{d}{ds} \big( e^{i(X(t)-X(s)) \cdot \xi + (t-s) \phi_1 (\xi)} \big) ds \\
& = -1 + e^{i X(t) \cdot \xi + t \phi_1 (\xi)}
\end{align*}
This gives us $IV = F + R_4 + E_{2,\epsilon}$ where $R_4$ stands for a remainder term, $E_{2,\epsilon}$ is a regularizing term of size $\epsilon$ and $F$ stands for the friction term:
\begin{align} \label{friction}
F = \rho_0 \Re \Big \langle \Bigg( \begin{array}{ll}
\nabla_x W \\
0
\end{array} \Bigg) , H_{\epsilon}^{-1} (t) \Bigg( \begin{array}{ll}
0 \\
W
\end{array} \Bigg) \Big \rangle
\end{align}
As for $I,R_2,R_3 $ we have for $R_4$, using \eqref{usefuldiag}:
\begin{eqnarray*}
R_4 &=&  \frac{\rho_0}{2} \Re \Big \langle i \xi_j \widehat{W} , \frac{\widehat{W} \sqrt{-\Delta}}{i \sqrt{1-\Delta}} D_1 ^{4} - \frac{\widehat{W} D_2 ^{4} \sqrt{-\Delta}}{i \sqrt{1-\Delta}} \Big \rangle
\end{eqnarray*} 
\end{proof}

\begin{remark}
Notice that unlike \eqref{friction}, the terms $R_1, R_2, R_3$ and $R_4$ are oscillatory in time. Heuristically, we therefore expect them to decay in time. They can be considered as remainder terms compared to the friction. We will make this precise in subsequent sections. 
\end{remark}
Now we have the following formula for the leading term: (this is a straightforward adaptation to five space dimension of Lemma 2.1 from \cite{Fro2}). It is at this point that we make use of the normalization $\vert \widehat{V}(0) \vert = 1.$ 
\begin{lemma}\label{representation}
We have
\begin{align*}
\lim_{\epsilon\to 0^+}\rho_0 \Re \Big \langle \Bigg( \begin{array}{c}
\nabla_x W \\
0
\end{array} \Bigg) , H^{-1}_{\epsilon} (t) \Bigg( \begin{array}{c}
0 \\
W
\end{array} \Bigg) \Big \rangle=-\rho_0 \Lambda(\vert P(t) \vert) (\vert P(t) \vert - 1)^{3+2n} \frac{P(t)}{\vert P(t) \vert} 1_{\lbrace \vert P \vert >1 \rbrace }
\end{align*}
where $ \Lambda $ is a discontinuous function at 1, smooth everywhere else and which is 0 for $ x \leqslant 1. $ It is bounded from below on $(1;+\infty)$ by a positive constant $ C_0 .$
\end{lemma}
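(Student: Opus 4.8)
The plan is to compute the limit explicitly in Fourier space, extract the imaginary (principal-value) contribution, and then perform the angular integration over the sphere where the symbol of $H_\epsilon(t)$ vanishes. First I would write, using Plancherel and the diagonalization of $H_\epsilon(t)$ from Lemma \ref{diag}, the bracket as an integral over $\mathbb{R}^5$ of $\widehat{W}(\xi)$ times $\overline{\widehat{W}(\xi)}$ times a scalar multiplier coming from $\mathcal{F}H^{-1}_\epsilon(t)$; after applying the matrix $\mathcal{F}A$, $\mathcal{F}A^{-1}$ the relevant multiplier is of the form
\begin{align*}
m_\epsilon(\xi) = \frac{c\, |\xi|^2}{|\xi|^2(1+|\xi|^2) - (P(t)\cdot\xi)^2 + (\text{terms vanishing as }\epsilon\to 0)}
\end{align*}
with the $\epsilon$ playing the role of the Feynman regularization, so that $\lim_{\epsilon\to0^+}$ of the symmetric (real) part of the integral cancels against the gradient $i\xi_j$ pairing by oddness, while the antisymmetric part produces a Dirac mass $\delta\big(|\xi|^2(1+|\xi|^2)-(P(t)\cdot\xi)^2\big)$ on the characteristic variety. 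This is exactly the Sokhotski–Plemelj step; I would be careful to track that the $\Re\langle i\xi_j\widehat W,\cdot\rangle$ structure selects precisely the odd-in-$\xi$ part of $m_\epsilon$, which is what survives.

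Next I would analyze the zero set $\Sigma = \{\xi : |\xi|^2(1+|\xi|^2) = (P(t)\cdot\xi)^2\}$. Writing $\xi = r\omega$ with $\omega\in S^4$, the equation becomes $1+r^2 = r^2(P(t)\cdot\omega)^2$, i.e. $r^2 = \big((P(t)\cdot\omega)^2-1\big)^{-1}$, which has a solution only when $(P(t)\cdot\omega)^2 > 1$, hence the indicator $1_{\{|P(t)|>1\}}$ and, for $|P(t)|>1$, a nonempty cap of directions $\omega$. On that cap one has $r^2-r^2(P(t)\cdot\omega)^2 = -1$ hence the near-origin behavior: the smallest $r$ on $\Sigma$ is attained when $P(t)\cdot\omega = |P(t)|$ giving $r_{\min}^2 = (|P(t)|^2-1)^{-1}$, so $\Sigma$ stays away from $0$ but approaches it as $|P(t)|\to\infty$; conversely as $|P(t)|\to 1^+$ the sphere $\Sigma$ escapes to infinity. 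I would then use the Fermi golden rule $W = (-\Delta)^n V$, so $\widehat W(\xi) = |\xi|^{2n}\widehat V(\xi)$, to see that $|\widehat W(\xi)|^2 = |\xi|^{4n}|\widehat V(\xi)|^2$ contributes a factor scaling like $r^{4n}$ on $\Sigma$; combining with the Jacobian of the surface measure (which carries the remaining powers of $|P(t)|-1$) and with the explicit $|\xi|^2$ in $m_\epsilon$, the power $(|P(t)|-1)^{3+2n}$ emerges. The direction of the vector is forced by symmetry: after integrating the odd factor $\xi_j$ over the rotationally-symmetric-about-$P(t)$ cap $\Sigma$, only the component along $P(t)/|P(t)|$ survives, with a strictly positive coefficient since the integrand there has a definite sign.

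I would package the remaining angular/radial integral as $\Lambda(|P(t)|) := \int_{\text{cap}} (\dots)\, d\sigma(\omega)$; smoothness of $\Lambda$ on $(1,\infty)$ follows because $\Sigma$ depends smoothly on $|P(t)|$ there and $\widehat V$ is smooth, while the jump at $1$ and the value $0$ on $(-\infty,1]$ come from the cap being empty for $|P(t)|\le 1$ and having nonzero measure for $|P(t)|>1$; the uniform lower bound $C_0>0$ on $(1,\infty)$ follows from $|\widehat V(0)|=1$ together with continuity of $\widehat V$ (so $|\widehat V|$ is bounded below on a neighborhood of $\Sigma$ for $|P(t)|$ large, and compactness handles any bounded range of $|P(t)|$ bounded away from $1$; one must check the limits $|P(t)|\to 1^+$ and $|P(t)|\to\infty$ separately, which is where the normalization $|\widehat V(0)|=1$ does real work at the $\infty$ end since $\Sigma\to 0$ and $|\widehat V|\to 1$ there). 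The main obstacle I anticipate is the bookkeeping in the Sokhotski–Plemelj step across the $2\times 2$ diagonalization: one must correctly identify which of the two eigenbranches $\pm iL + P(t)\cdot\nabla$ produces the surviving delta-function, verify that the two contributions do not cancel, and confirm that the $\epsilon>0$ sign convention (built into $H_\epsilon$ with $-\epsilon$ on the diagonal) selects the physically correct \emph{negative} sign of the friction — this sign is the whole point, so I would double-check it by comparing against the known computation, Lemma 2.1 of \cite{Fro2}, of which this is stated to be the five-dimensional analogue.
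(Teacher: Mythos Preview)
Your overall strategy --- diagonalize $H_\epsilon(t)$, apply Sokhotski--Plemelj to isolate a delta measure on the characteristic set $\Sigma=\{\phi_1(\xi)^2=(P(t)\cdot\xi)^2\}$, then evaluate the resulting surface integral using $\widehat W(\xi)=|\xi|^{2n}\widehat V(\xi)$ and the rotational symmetry about $P(t)$ --- is correct and is exactly the computation behind Lemma~2.1 of \cite{Fro2}, which is all the paper invokes here (no independent proof is given).

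There is, however, a concrete algebraic error in your description of $\Sigma$ that inverts the geometry. From $1+r^2=(P(t)\cdot\omega)^2$ one gets
\[
r^2=(P(t)\cdot\omega)^2-1,
\]
not $r^2=\big((P(t)\cdot\omega)^2-1\big)^{-1}$. Hence the behaviour is the opposite of what you wrote: as $|P(t)|\to 1^+$ the surface $\Sigma$ \emph{collapses to the origin} (the maximal radius is $r_{\max}=\sqrt{|P(t)|^2-1}\to 0$), while as $|P(t)|\to\infty$ it extends outward. This reversal matters for two of your claims. First, the power count for $(|P(t)|-1)^{3+2n}$ must be redone with the correct formula: near threshold the cap aperture, the factor $r^{4n}$ from the golden rule, and the Jacobian of the delta all contribute \emph{positive} powers of $|P(t)|-1$, and these combine to give the stated exponent. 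Second, the role of the normalization $|\widehat V(0)|=1$ is exactly opposite to what you said: it is needed to keep $\Lambda$ bounded below as $|P(t)|\to 1^+$ (where $\Sigma\to 0$ and $|\widehat V|\to 1$), not at the $|P(t)|\to\infty$ end. Once you correct the formula for $r$ on $\Sigma$, the rest of your outline goes through as written.
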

\begin{remark}
The fact that $\Lambda$ is 0 in the subsonic regime indicates that the friction is only present in the supersonic regime.
\end{remark}
We conclude this section with the final reformulation of the equation satisfied by the velocity of the particle:
\begin{corollary}\label{finalform}
The velocity of the particle $P(t)$ satisfies the following ordinary differential equation:
\begin{align*}
\dot{P}(t) &= -\rho_0 \Lambda(\vert P(t) \vert) (\vert P(t) \vert - 1)^{3+2n} \frac{P(t)}{\vert P(t) \vert} 1_{\lbrace \vert P \vert >1 \rbrace } + R_1 + R_2 + R_3  + R_4 \\
&+ E_{1,\epsilon} +  E_{2,\epsilon}+R_{\epsilon} 
\end{align*}
where the explicit expression of $R_1, R_2, R_3$ and $R_4$ are given in Lemma \ref{reform} and 
\begin{align*}
R_{\epsilon} := \rho_0 \Lambda(\vert P(t) \vert) (\vert P(t) \vert - 1)^{3+2n} \frac{P(t)}{\vert P(t) \vert} 1_{\lbrace \vert P \vert >1 \rbrace } + \rho_0 \Re \Big \langle \Bigg( \begin{array}{c}
\nabla_x W \\
0
\end{array} \Bigg) , H^{-1}_{\epsilon} (t) \Bigg( \begin{array}{c}
0 \\
W
\end{array} \Bigg) \Big \rangle
\end{align*}
\end{corollary}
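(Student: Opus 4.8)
This corollary is a bookkeeping step that stitches together the two preceding lemmas, so the plan is short. Lemma \ref{reform} already writes $\dot P(t)$, for each fixed $\epsilon>0$, as the $\epsilon$-regularized friction term plus the oscillatory remainders $R_1,\dots,R_4$ and the regularizing terms $E_{1,\epsilon},E_{2,\epsilon}$; Lemma \ref{representation} identifies the $\epsilon\to 0^+$ limit of that friction term with the explicit, discontinuous expression involving $\Lambda$. First I would simply match these up.

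Concretely, I would rewrite the friction term of Lemma \ref{reform} by adding and subtracting the candidate leading term, i.e.
\[ \rho_0 \Re \Big\langle \Bigg(\begin{array}{c} \nabla_x W \\ 0 \end{array}\Bigg), H^{-1}_{\epsilon}(t) \Bigg(\begin{array}{c} 0 \\ W \end{array}\Bigg) \Big\rangle = -\rho_0 \Lambda(\vert P(t) \vert)(\vert P(t) \vert - 1)^{3+2n}\frac{P(t)}{\vert P(t) \vert} 1_{\lbrace \vert P \vert > 1 \rbrace} + R_{\epsilon}, \]
which holds by the very definition of $R_\epsilon$ in the statement and requires no estimate — it is a tautology. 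Substituting this identity into the formula of Lemma \ref{reform} produces the displayed equation verbatim. Since $P\in\mathcal{C}^1$ by Corollary \ref{C1}, $\dot P$ is continuous, so the result is a bona fide (integro-)differential equation for the velocity.

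What makes the reformulation worth recording is not the identity itself but the use it will be put to later: its right-hand side equals $\dot P(t)$ and is therefore independent of $\epsilon$, while Lemma \ref{representation} guarantees that $R_\epsilon(t)\to 0$ as $\epsilon\to 0^+$ pointwise in $t$, and a direct inspection of the formulas for $D_1^{1,\epsilon}, D_2^{1,\epsilon}, D_1^{2,\epsilon}, D_2^{2,\epsilon}$ (each carrying an explicit factor $\epsilon$ against a bounded principal-value kernel) shows $E_{1,\epsilon},E_{2,\epsilon}\to 0$ as well. Hence in any subsequent estimate one may send $\epsilon\to 0^+$ and discard $E_{1,\epsilon}+E_{2,\epsilon}+R_\epsilon$. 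Because all the analytic content is already packaged in Lemmas \ref{reform} and \ref{representation}, which we are free to assume, there is no real obstacle in this step; the substantive work — showing that $R_1,\dots,R_4$ decay at an integrable rate through stationary phase and that the friction term dominates the dynamics — is deferred to Section \ref{acceleration}.
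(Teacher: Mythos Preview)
Your proposal is correct and matches the paper's approach: the corollary is stated there without proof, as an immediate bookkeeping consequence of Lemmas~\ref{reform} and~\ref{representation} obtained by the very add-and-subtract identity you describe. Your additional remarks about $R_\epsilon, E_{1,\epsilon}, E_{2,\epsilon}\to 0$ anticipate exactly how the formula is used in Section~\ref{acceleration}.
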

\begin{remark}
We isolated the terms $E_{1,\epsilon}, E_{2,\epsilon}$ and $R_{\epsilon}$ since they go to 0 as $\epsilon \to 0^+.$ They are irrelevant from the point of view of asymptotics. They are mere consequences of the need to regularize the singularity for the computations to be rigorous.
\end{remark}

\section{Ballistic motion of the particule} \label{acceleration}
In this section we describe the asymptotic behavior of the particle. More precisely we prove the following proposition, which is the first assertion in Theorem \ref{main}:
\begin{proposition} \label{mainprop}
We have the following estimate for all $t \geqslant 0:$
\begin{displaymath}
\vert \dot{P}(t) \vert \lesssim (1+t)^{-1-\alpha}
\end{displaymath}
where $ \alpha = \frac{1}{2n+2}$ and $n$ is, as we defined earlier, such that $W = (-\Delta)^n V.$ 
\end{proposition}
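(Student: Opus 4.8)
The plan is to run a continuity (bootstrap) argument on the quantity $M(T) := \sup_{0 \leq t \leq T} (1+t)^{1+\alpha} |\dot P(t)|$, using the equation of Corollary \ref{finalform} after sending $\epsilon \to 0^+$ (which kills $E_{1,\epsilon}, E_{2,\epsilon}, R_\epsilon$). The key structural point is that the friction term $-\rho_0 \Lambda(|P|)(|P|-1)^{3+2n}\frac{P}{|P|}1_{\{|P|>1\}}$ has a sign that opposes motion in the supersonic regime, so it can only \emph{help}: one never needs to bound it from above, only exploit its non-positivity (in the radial variable $|P|$). Concretely I would show $\frac{d}{dt}|P(t)|^2 \leq 2 P(t)\cdot(R_1+R_2+R_3+R_4)$, so that $|P(t)|$ stays in a fixed bounded interval (Lemma \ref{easybound} already gives a uniform bound) and, more importantly, that $\int_0^t (\text{friction}) \, ds$ is controlled by $\int_0^t |R_1+\dots+R_4| \, ds$, which feeds back into the bootstrap. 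The real work is estimating $R_1, R_2, R_3, R_4$.

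For $R_1$ and $E_{1,\epsilon}$ (data-driven, no $\dot P$ inside): these are oscillatory integrals $\int e^{it\phi_1(\xi) + i(X(t)-X(0))\cdot\xi} \frac{(\dots)}{i\phi_1(\xi)+iP(t)\cdot\xi}(\text{smooth, } L^1 \text{ data}) \, d\xi$ (and the $-\phi_1$ conjugate). I would first convert the principal-value denominator into a time integral $\frac{1}{i\phi_1+iP\cdot\xi} = -i\int_0^\infty e^{i\tau(\phi_1+P\cdot\xi)} \, d\tau$ (the regularization $\epsilon>0$ makes this legitimate), then apply the dispersive decay estimate for $e^{itL}$ from Lemma \ref{classicdisp} together with the smoothing operators $U^{s}$ built into $|||\beta_0|||$. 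Since $X(t)-X(0) = \int_0^t P(s)\,ds \approx t P_\infty$ for large $t$, the two stationary phases (from $t\phi_1$ and from $\tau\phi_1 + (X(t)-X(0))\cdot\xi$) can coincide — this is exactly the slow-decay scenario flagged in the introduction — but the dimension five gain makes $t^{1-d/2} = t^{-3/2}$, which is integrable; multiplying by $(1+t)^{1+\alpha}$ and integrating is still fine provided $\alpha < 1/2$, consistent with $\alpha = \frac{1}{2n+2}$. The terms $R_2$, $R_3$ contain factors $\int_0^t \dot P(s)\cdot\xi\,ds$ (and a double such integral), which by the bootstrap hypothesis are bounded by $\int_0^t (1+s)^{-1-\alpha}\,ds \lesssim 1$, so they cost at most a constant and are handled the same way — gaining a small power of $M(T)$ which can be absorbed for $\rho_0, \Vert W\Vert', |||\beta_0|||$ small.

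The main obstacle is $R_4$: it has no $\dot P$ integral to tame it and no $L^1$ data smallness, only $\rho_0 \widehat W$ against $\widehat W$, and its phase $t\phi_1(\xi) + (X(t)-X(0))\cdot\xi$ has, for a near-ballistic particle with $X(t)\approx tP(t)$, a stationary point where $\nabla\phi_1(\xi) = -P(t)$; a naive stationary phase gives only $t^{-d/2}$ from the $e^{it\phi_1}$ oscillation but the principal-value denominator $i\phi_1(\xi)+iP(t)\cdot\xi$ \emph{also} vanishes near that same point, degrading the bound to roughly $t^{-1/2}$ — not integrable even in $d=5$. The fix, as the introduction promises, is to localize the stationary point precisely: one shows it lies within $O(|P|-1)$ of the origin (it collapses to $\xi=0$ as $|P|\to 1^+$), so on a neighborhood of that point one can use the vanishing of $\widehat W(\xi) = |\xi|^{2n}\widehat V(\xi)$ to order $2n$ at $\xi = 0$ to extract an extra factor $(|P|-1)^{2n}$ or, better, to integrate by parts in $\xi$ against the smallness of $\widehat W$ there; combined with the $t^{-3/2}$ from stationary phase away from the degeneracy and a careful splitting into $|P|-1 \gtrsim (1+t)^{-\alpha}$ versus $|P|-1 \lesssim (1+t)^{-\alpha}$ (in the latter region the friction is weak but so is the obstruction, and $R_4$ is directly small by the Fermi rule; in the former the denominator is bounded below and ordinary stationary phase in dimension five suffices), one recovers $|R_4| \lesssim (1+t)^{-1-\alpha}$ with $\alpha = \frac{1}{2n+2}$ coming precisely from optimizing this split. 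Closing the bootstrap then requires the smallness $\Vert W\Vert', |||\beta_0||| \ll \rho_0 \ll 1$ so that the constant multiplying $M(T)$ (and the contributions of $R_2, R_3$) is $< 1$.
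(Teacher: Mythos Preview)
Your proposal has two genuine gaps.

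First, you have misidentified the difficult remainder. In the paper's decomposition $R_4$ is \emph{easy}: after writing the principal value as $-\int_0^\infty e^{i\sigma(\phi_1+P(t)\cdot\xi)}e^{-\epsilon\sigma}\,d\sigma$, the inner $\xi$-integral is exactly $e^{i(t+\sigma)L}$ applied to a nice function, so Lemma~\ref{classicdisp} gives $(1+t+\sigma)^{-5/2}$ and the $\sigma$-integral yields $(1+t)^{-3/2}$, which beats $(1+t)^{-1-\alpha}$ since $\alpha<1/2$. There is no degradation to $t^{-1/2}$ here. The hard term is $R_3$: it carries an additional oscillatory $\tau$-integral $\int_0^s e^{-i\tau\phi_1-i(X(\tau)-X(0))\cdot\xi}\,d\tau$, so the effective phase is $(t+\sigma-\tau)\phi_1(\xi)+(X(t)-X(\tau)+\sigma P(t))\cdot\xi$. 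Naive stationary phase gives $(1+t+\sigma-\tau)^{-5/2}$; integrating in $\sigma$ and then in $\tau\in[0,s]$ leaves only $(1+t-s)^{-1/2}$, and after the $s$-integration against $|\dot P(s)|\lesssim(1+s)^{-1-\alpha}$ one obtains at best $t^{-1/2}$, not integrable. Your claim that the $\dot P$-integrals in $R_3$ ``cost at most a constant and are handled the same way'' is therefore wrong; it is precisely $R_3$ that requires the careful localization of the stationary point near $\xi=0$ and the Fermi golden rule, as carried out in Section~\ref{challenging}.

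Second, your bootstrap is incomplete. You propose to track only $M(T)=\sup_{t\leq T}(1+t)^{1+\alpha}|\dot P(t)|$ and to dispose of the friction term by the sign observation $\frac{d}{dt}|P|^2\leq 2P\cdot(R_1+\cdots+R_4)$. But to bound $|\dot P|$ itself you must bound the friction from above: $|\dot P|\leq \rho_0\Lambda(|P|)(|P|-1)_+^{3+2n}+|R_1+\cdots+R_4|$, and the first term is $\lesssim(1+t)^{-1-\alpha}$ only if $(|P|-1)_+\lesssim(1+t)^{-\alpha}$ (note $\alpha(3+2n)=1+\alpha$). Your sign argument gives boundedness of $|P|$, not this decay. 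The paper closes this by running a \emph{second} bootstrap assumption \eqref{bootstrap2} on $(1+t)^{\alpha}(|P(t)|-1)_+$ and showing, via the equation for $\psi(t)=(1+t)^\alpha(|P(t)|-1)_+$, that the negative friction term dominates the remainders at the boundary $\psi(T)=M$, forcing $\psi'(T)<0$. Without this coupled bootstrap your argument cannot close, since you have no mechanism to propagate the decay of $(|P|-1)_+$ that the bound on $|\dot P|$ requires.
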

We have the immediate corollary:
\begin{corollary}
There exists $ P_{\infty} \in \mathbb{R}^5 $ such that 
\begin{displaymath}
P(t) \longrightarrow_{t \rightarrow + \infty} P_{\infty}
\end{displaymath}
\end{corollary}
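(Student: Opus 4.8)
The plan is to deduce the corollary directly from the decay estimate of Proposition \ref{mainprop}, which is the only substantive input; the corollary itself is a soft consequence of absolute integrability of $\dot P$.

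First I would recall from Corollary \ref{C1} that $P \in \mathcal{C}^1([0;\infty);\mathbb{R}^5)$, so that the fundamental theorem of calculus applies and
\begin{displaymath}
P(t) = P(0) + \int_0^t \dot P(s)\, ds \qquad \text{for all } t \geqslant 0.
\end{displaymath}
Next, since $n > 1/4$ we have $\alpha = \frac{1}{2n+2} > 0$, hence by Proposition \ref{mainprop}
\begin{displaymath}
\int_0^{\infty} \vert \dot P(s) \vert\, ds \lesssim \int_0^{\infty} (1+s)^{-1-\alpha}\, ds = \frac{1}{\alpha} < \infty,
\end{displaymath}
so the integrand is in $L^1([0;\infty))$. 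Consequently the vector-valued integral $\int_0^t \dot P(s)\, ds$ converges absolutely as $t \to +\infty$: for $t' > t$ one has $\vert P(t') - P(t) \vert \leqslant \int_t^{t'} \vert \dot P(s) \vert\, ds \to 0$ as $t,t' \to \infty$, which is the Cauchy criterion in the complete space $\mathbb{R}^5$. Therefore the limit
\begin{displaymath}
P_{\infty} := P(0) + \int_0^{\infty} \dot P(s)\, ds \in \mathbb{R}^5
\end{displaymath}
exists and $P(t) \longrightarrow_{t \to +\infty} P_{\infty}$, which is the claim. As a byproduct I would record the convergence rate $\vert P(t) - P_{\infty} \vert \leqslant \int_t^{\infty} \vert \dot P(s) \vert\, ds \lesssim (1+t)^{-\alpha}$, which will be useful later (e.g.\ in Section \ref{scattering} for controlling $X(t) - tP_{\infty}$ and hence the phase differences).

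There is essentially no obstacle here: all the difficulty has been absorbed into Proposition \ref{mainprop}, whose proof (the stationary phase analysis of $R_1,\dots,R_4$ together with the sign of the friction term) is the genuine content. The only point requiring the tiniest bit of care is that $\dot P$ is $\mathbb{R}^5$-valued rather than scalar, so I invoke absolute convergence (domination by the scalar integral $\int_0^\infty (1+s)^{-1-\alpha}\,ds$) to pass to the limit componentwise, and completeness of $\mathbb{R}^5$ to conclude.
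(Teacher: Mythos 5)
Your proof is correct and is exactly the argument the paper has in mind: the corollary is declared ``immediate'' from Proposition \ref{mainprop}, and the mechanism is precisely absolute integrability of $\dot P$ on $[0,\infty)$ (since $\alpha>0$) combined with the fundamental theorem of calculus and completeness of $\mathbb{R}^5$. The extra remark recording the rate $\vert P(t)-P_\infty\vert\lesssim(1+t)^{-\alpha}$ is a harmless and useful byproduct, consistent with what is used later.
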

\begin{remark}
The fact that the decay gets faster as the Golden Rule condition gets weaker (if $n$ decreases then $\alpha$ increases) is expected. Indeed the coupling causes anomalously slow decay, which was the main reason we needed the dimension five assumption. It is more generally true in field-particle interactions, see for example the work of A. Soffer and M. Weinstein in \cite{SWnonlin} where the solutions have slower decay than in the free case.  \\
For the technical justification of the choice of $\alpha$ see Remark \ref{alpha} below.
\end{remark}

\subsection{Short time estimate}
We start by giving a crude uniform in time upper bound of the acceleration of the particle: 
\begin{lemma}
We have the bound
\begin{align*}
\sup_{t \geqslant 0} \vert \dot{P}(t) \vert & \leqslant \sqrt{\frac{\rho_0}{1- \rho_0}  \bigg(\mathcal{H} + \sqrt{\rho_0}  \bigg)}
\end{align*}
\end{lemma}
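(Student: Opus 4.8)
The plan is to read the bound straight off the particle equation in \eqref{systembis} via Cauchy--Schwarz, and then control the field norm that appears using conservation of $\mathcal H$, essentially as in Lemma \ref{easybound}. Note first that by Corollary \ref{C1} the map $t\mapsto\dot P(t)$ is continuous, so $\sup_{t\geqslant 0}|\dot P(t)|$ is an honest pointwise supremum. From the second line of \eqref{systembis},
\[
\dot P(t)=\sqrt{\rho_0}\,\Re\Big\langle \big(\nabla W,0\big)^{T},h(t)\Big\rangle=\sqrt{\rho_0}\,\Re\!\int \overline{\nabla W}\;\Re\beta(t),
\]
so Cauchy--Schwarz gives $|\dot P(t)|\leqslant \sqrt{\rho_0}\,\|\nabla W\|_{L^2}\,\|\Re\beta(t)\|_{L^2}$.

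Next I would bound $\|\Re\beta(t)\|_{L^2}$ uniformly in $t$ by repeating the argument of Lemma \ref{easybound}, but choosing the Young weight on the cross term so the constants land exactly where the statement wants them. Using conservation of the Hamiltonian \eqref{hamiltonian} and the pointwise bound $2\sqrt{\rho_0}\,|W^{X}\Re\beta|\leqslant \rho_0\,|\Re\beta|^2+|W^{X}|^2$ yields
\[
\mathcal H\;\geqslant\;(1-\rho_0)\,\|\Re\beta\|_{L^2}^2-\|W\|_{L^2}^2+\|\nabla\Im\beta\|_{L^2}^2,
\]
hence $\|\Re\beta(t)\|_{L^2}^2\leqslant \tfrac{1}{1-\rho_0}\big(\mathcal H+\|W\|_{L^2}^2\big)$ for all $t\geqslant 0$. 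Finally I would invoke the smallness/normalization of the potential: since $\|W\|_{L^2},\|\nabla W\|_{L^2}\leqslant\|W\|_{H^4}\leqslant\|W\|'$ with $\|W\|'\ll\rho_0\ll 1$, one has $\|\nabla W\|_{L^2}\leqslant 1$ and $\|W\|_{L^2}^2\leqslant\sqrt{\rho_0}$, and combining the two displays gives
\[
|\dot P(t)|\leqslant \sqrt{\rho_0}\cdot 1\cdot\sqrt{\tfrac{1}{1-\rho_0}\big(\mathcal H+\sqrt{\rho_0}\big)}=\sqrt{\tfrac{\rho_0}{1-\rho_0}\big(\mathcal H+\sqrt{\rho_0}\big)},
\]
which is the claimed estimate.

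I do not expect a genuine obstacle here: this is a soft a priori bound whose entire content is Cauchy--Schwarz on the particle equation together with the coercivity of $\mathcal H$ already used for Lemma \ref{easybound}. The only point requiring mild care is the bookkeeping of the potential-dependent constants --- in particular picking the Young weight so that one obtains the factor $\tfrac1{1-\rho_0}$ and the term $\mathcal H+\sqrt{\rho_0}$ rather than the marginally worse $\tfrac1{1-\sqrt{\rho_0}}$ and $\mathcal H+\sqrt{\rho_0}\|W\|_2^2$ that a verbatim citation of Lemma \ref{easybound} would produce; this refinement is cosmetic and plays no role in how the bound is used afterwards.
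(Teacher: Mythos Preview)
Your proof is correct and follows the same approach as the paper: Cauchy--Schwarz on the particle equation followed by the Hamiltonian coercivity bound on $\|\Re\beta\|_{L^2}$ and the smallness of $\|W\|'$. The only difference is cosmetic bookkeeping---you tune the Young weight to land exactly on $\tfrac{1}{1-\rho_0}$ and invoke $\|W\|'\ll\rho_0$ rather than just $\|W\|'\leqslant 1$---which, as you note, has no bearing on how the bound is used downstream.
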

\begin{proof}
We use the Cauchy-Schwarz inequality in the equation of the velocity of the particle in the original system \eqref{system} to write that
\begin{align*}
\vert \dot{P} \vert & \leqslant \sqrt{\rho_0} \Vert \nabla W \Vert_2 \Vert \Re \beta \Vert_{L^2} \\
                    & \leqslant \sqrt{\frac{\rho_0}{1- \rho_0}  \bigg(\mathcal{H} + \sqrt{\rho_0} \Vert W \Vert_2 ^2 \bigg)} \Vert \nabla W \Vert_2 \\
                    & \leqslant  \sqrt{\frac{\rho_0}{1- \rho_0}  \bigg(\mathcal{H} + \sqrt{\rho_0}  \bigg)}
\end{align*}
where for the second to last line we used the bound on $\Re \beta$ from Lemma \ref{easybound} and for the last line we used that $\varepsilon_0<1$ therefore $\Vert W \Vert' \leqslant 1.$ 
\end{proof}

\subsection{The bootstrap}
The proof of proposition \ref{mainprop} relies on a bootstrap argument. \\
Let 
\begin{align*}
M := 2 \max \bigg \lbrace  \sqrt{2 \mathcal{H} + \sqrt{\rho_0}} ;   \sqrt{\frac{\rho_0}{1- \rho_0}  \bigg(\mathcal{H} + \sqrt{\rho_0} \bigg)} ; \frac{51}{C_0 \rho_0} \bigg \rbrace
\end{align*}
where $C_0$ is the constant introduced in Lemma \ref{representation}.
\\ 
\\
Let $\alpha = \frac{1}{2n+2}. $\\
With these notations we make the following bootstrap assumptions: 
\begin{eqnarray}
\label{bootstrap1}\max_{0 \leqslant t \leqslant T} (1+t)^{1+ \alpha} \vert \dot{P}(t) \vert & \leqslant & M^{3+2n} \\
\label{bootstrap2}\max_{0 \leqslant t \leqslant T} (1+t)^{\alpha} (\vert P(t) \vert -1) 1_{\lbrace \vert P \vert > 1 \rbrace} & \leqslant & M
\end{eqnarray}
From the previous lemma, Lemma \ref{easybound} and by continuity in time of the norms of $P$ and $\dot{P},$ the bootstrap assumptions are true for small times. 
\\
Now let's start the proof of proposition 4.1.
\begin{proof}
Assume now that the assumptions are true on $[0;T]$. (we know that $T>0$ from our choice of $M$) \\
First we prove that we can continue the second assumption \eqref{bootstrap2} beyond $T.$  \\
\underline{Case 1:} Assume that $ (1+T)^{\alpha} (\vert P(T) \vert -1) 1_{\lbrace \vert P \vert > 1 \rbrace} < M. $ Then by continuity we can extend the interval on which the property is true, and the bootstrap is proved. \\
\underline{Case 2:} Assume $ (1+T)^{\alpha} (\vert P(T) \vert -1) 1_{\lbrace \vert P \vert > 1 \rbrace} = M. $ \\
We start by computing the derivative of the function $ \psi(t) = (1+t)^{\alpha} (\vert P(t) \vert -1) 1_{\lbrace \vert P \vert > 1 \rbrace} $ at the point $T$. We will prove later that it is strictly negative, showing that the bootstrap assumption can be continued past time $T.$ Note that the differentiation is legitimate by Corollary \ref{C1}.
\\
\\
Using the equation of motion of the particle, we get, using Corollary \ref{finalform}: 
\begin{align}
\label{ineq} \psi'(t) &= \frac{1+\alpha}{(3+2n)(1+ t)} \psi(t) \\
\notag & - \rho_0 \Lambda(\vert P(t) \vert) (1+t)^{-(1+\alpha) \frac{2n+2}{2n+3}} \psi(t)^{3+2n} \\
 \notag & + \frac{P(t)}{\vert P(t) \vert} \cdot (R_1+R_2+R_3+R_4) (1+t)^{\alpha} \\
 \notag &+   \frac{P(t)}{\vert P(t) \vert} \cdot (E_{1,\epsilon} + E_{2,\epsilon} + R_{\epsilon}) (1+t)^{\alpha}
\end{align}
This implies that
\begin{align}
\label{ineqbis} \psi'(T) &\leqslant  \frac{1+\alpha}{(3+2n)(1+ t)} \psi(T) \\
\notag & - \rho_0 \Lambda(\vert P(T) \vert) (1+T)^{-(1+\alpha) \frac{2n+2}{2n+3}} \psi(T)^{3+2n} \\
 \notag & + \Bigg \vert \frac{P(t)}{\vert P(t) \vert} \cdot (R_1+R_2+R_3+R_4) (1+T)^{\alpha} \Bigg \vert \\
 \notag &+ \Bigg \vert  \frac{P(t)}{\vert P(t) \vert} \cdot (E_{1,\epsilon} + E_{2,\epsilon} + R_{\epsilon}) (1+T)^{\alpha} \Bigg \vert
\end{align}
where the expressions of $R_1,R_2,R_3,R_4, E_{1,\epsilon}$ and $E_{2,\epsilon}$ are given in Lemma \ref{reform} and $R_{\epsilon}$ in Corollary \ref{finalform}.
\\
\\
We will prove that the terms involving $R_1,R_2,R_3$ and $R_4$ decay like $ t^{-1-\alpha} $ (as we mentioned previously we do not need to worry about $E_{1,\epsilon}, E_{2,\epsilon}$ and $R_{\epsilon}$ since we will take the limit as $\epsilon \to 0^+$ in the equation). 
\begin{remark} \label{alpha}
For $\alpha = \frac{1}{2n+2}$ the first three terms in \eqref{ineq} have the same decay in $t. $ 
\end{remark}
Then we will see that, using the smallness of the potential and the initial data of the field, the coupling term dominates over the other two terms in \eqref{ineq}. As a result $\psi'(T) <0$ and therefore the bootstrap assumption \eqref{bootstrap1} can be continued past $T.$ \\
Finally we will prove that, if the friction is weak enough (that is, if $\rho_0$ is small enough) then the first bootstrap assumption \eqref{bootstrap1} can also be continued past time $T.$ \\
We start by proving the announced decay of the remainder terms.

\subsection{Easier terms}
We start by proving decay estimates for the easier terms $R_1, 
E_{1,\varepsilon}, R_2, R_4$ and $E_{2,\varepsilon}.$ The classical stationary phase lemma \ref{classicdisp} from the appendix suffices to treat them.
\begin{lemma}\label{decayI}
Assume that the bootstrap assumptions \eqref{bootstrap1} and \eqref{bootstrap2} hold. \\
Then the following decay estimates hold:
\begin{align*}
\vert R_1 \vert & \lesssim \max \lbrace \vert P_0 \vert ;1 \rbrace \frac{\sqrt{\rho_0}\vert \vert \vert \beta_0 \vert \vert \vert }{(1+t)^{1+\alpha}}\Vert \nabla L W \Vert_{L^1}   \\
\vert E_{1,\epsilon} \vert & \lesssim \epsilon \frac{\sqrt{\rho_0} \vert \vert \vert \beta_0 \vert \vert \vert}{(1+t)^{1+\alpha}} \Vert \nabla W \Vert_{L^1}   \\
\vert R_2 \vert & \lesssim \frac{\sqrt{\rho_0} M^{3+2n} \vert \vert \vert \beta_0 \vert \vert \vert}{(1+t)^{1+\alpha}} \Vert \nabla W \Vert_{L^1}   \\
\vert R_4 \vert & \lesssim \rho_0\frac{\Vert \nabla W \Vert_{L^1} \big( \Vert W \Vert_{L^1} + \Vert U^{5/2} W \Vert_{L^1}\big)}{(1+t)^{1+\alpha}}  \\
\vert E_{2,\epsilon} \vert & \lesssim \epsilon t \rho_0 \Vert \nabla W \Vert_{L^1} \big( \Vert W \Vert_{L^1} + \Vert U^{5/2} W \Vert_{L^1}\big) 
\end{align*}
where the explicit expressions of $R_1, R_2, R_4, E_{1,\epsilon}$ and $E_{2,\epsilon}$ have been introduced in Lemma \ref{reform}. \\
Note also that the implicit constants here do not depend on $M$.
\end{lemma}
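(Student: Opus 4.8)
The plan is to treat all five quantities by a single mechanism, the differences being bookkeeping of weights and, for $E_{2,\epsilon}$, one extra time integration. For a fixed one of them I would start from its explicit formula in Lemma \ref{reform}: up to the written constants it is a finite sum of integrals
\[
\Re\int_{\mathbb{R}^5}\overline{\widehat a(\xi)}\;\widehat b(\xi)\;q(\xi)\;\frac{e^{\pm it\phi_1(\xi)+i(X(t)-X(0))\cdot\xi}}{\pm i\phi_1(\xi)+iP(t)\cdot\xi-\epsilon}\,d\xi ,
\]
where $a$ is built from the potential (essentially $\partial_j W$, with an extra $L$ for $R_1$), $b$ is built from $\beta_0$ (for $R_1,E_{1,\epsilon},R_2$) or from $W$ (for $R_4,E_{2,\epsilon}$), and $q$ collects the leftover symbol factors: a numerator $i(\pm\phi_1(\xi)+P(0)\cdot\xi)$ for $R_1$, the scalar $-\epsilon$ for $E_{1,\epsilon}$, the symbol $\widehat U(\xi)=\tfrac{|\xi|}{\sqrt{1+|\xi|^2}}$ for $R_4$ and $E_{2,\epsilon}$, the factor $i(P(t)-P(0))\cdot\xi$ for $R_2$, and — only for $E_{2,\epsilon}$ — an inner integral $\int_0^t e^{i(X(t)-X(s))\cdot\xi+i(t-s)\phi_1(\xi)}\,ds$ which I would pull out front first. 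Since $W\in\mathcal P_n$ all the norms of $W$ appearing below are finite, and the velocity is bounded in time by Lemma \ref{easybound}.

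The key step is to remove the regularised principal value. Since $\pm i\phi_1(\xi)+iP(t)\cdot\xi-\epsilon$ has real part $-\epsilon<0$, I would use the Laplace representation
\[
\frac{1}{\pm i\phi_1(\xi)+iP(t)\cdot\xi-\epsilon}=\mp\int_0^\infty e^{-\epsilon\tau}\,e^{\pm i\tau\phi_1(\xi)+i\tau P(t)\cdot\xi}\,d\tau ,
\]
exchange the $\xi$- and $\tau$-integrations (legitimate thanks to $e^{-\epsilon\tau}$ and the fast decay of $\widehat W,\widehat{\beta_0}$), and observe that the $\xi$-integral then has phase $\pm(t+\tau)\phi_1(\xi)+c_\tau\cdot\xi$ with $c_\tau$ a real vector ($X(t)-X(0)+\tau P(t)$, or $X(t)-X(s)+\tau P(t)$ for $E_{2,\epsilon}$). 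By Plancherel's theorem and the identity $\mathcal F^{-1}[e^{\pm is\phi_1(\xi)}e^{ic\cdot\xi}\widehat b(\xi)](x)=(e^{\pm isL}b)(x+c)$ — after shifting the real symbol factors in $q$ onto $a$ or into $b$ — this equals a constant times $\langle a,(e^{\pm i(t+\tau)L}b)(\cdot+c_\tau)\rangle$, which I would bound by $\|a\|_{L^1}\|e^{\pm i(t+\tau)L}b\|_{L^\infty}$ and then estimate with the dispersive bound of Lemma \ref{classicdisp}: in dimension five this provides decay of order $s^{-5/2}$ (for $s\gtrsim1$) in the relevant $U$-weighted $L^1$-type norm $Y$, together with the trivial bound $\|e^{\pm isL}b\|_{L^\infty}\le\|\widehat b\|_{L^1}$ for $s\lesssim1$.

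What remains is the $\tau$-integration (and the $s$-integration for $E_{2,\epsilon}$). Since $n>1/4$ we have $\alpha=\tfrac1{2n+2}<\tfrac12$, hence $1+\alpha<\tfrac32$ and $\int_0^\infty e^{-\epsilon\tau}\min(1,(t+\tau)^{-5/2})\,d\tau\lesssim(1+t)^{-3/2}\lesssim(1+t)^{-1-\alpha}$, which gives the stated decay for $R_1,E_{1,\epsilon},R_2,R_4$ (the prefactor $\epsilon$ of $E_{1,\epsilon}$ simply rides along; for $R_2$ the extra factor $i(P(t)-P(0))\cdot\xi$ supplies one derivative and $|P(t)-P(0)|\le\int_0^t|\dot P(s)|\,ds\lesssim M^{3+2n}\int_0^\infty(1+s)^{-1-\alpha}\,ds\lesssim M^{3+2n}$ by the bootstrap assumption \eqref{bootstrap1}). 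For $E_{2,\epsilon}$, since $\int_v^\infty\min(1,u^{-5/2})\,du\le\tfrac53$ for all $v\ge0$, the inner $\tau$-integral is $O(1)$ uniformly in $s$, so the outer $s$-integral only contributes an $O(t)$ factor and, with the $\epsilon$ prefactor, yields $|E_{2,\epsilon}|\lesssim\epsilon t\,\rho_0\|\nabla W\|_{L^1}(\|W\|_{L^1}+\|U^{5/2}W\|_{L^1})$. Tracking in each case which symbol factors go onto $a$ (producing $\|\nabla W\|_{L^1}$, or $\|\nabla LW\|_{L^1}$ for $R_1$) and which go into $\|b\|_Y$ (producing $\vert\vert\vert\beta_0\vert\vert\vert$, or $\|W\|_{L^1}+\|U^{5/2}W\|_{L^1}$), plus the $\max\{|P_0|,1\}$ coming from the numerator of $R_1$, reproduces each claimed bound; the constant $M$ appears only through the explicit factor $M^{3+2n}$ in the $R_2$ estimate, so all implicit constants are $M$-independent.

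The main obstacle is precisely this first step: the estimates must hold uniformly in the regularisation parameter $\epsilon$ (only sent to $0$ afterwards, in Corollary \ref{finalform}), so one cannot bound the principal value pointwise; rewriting it as a Laplace transform of the semigroup $e^{\pm i\tau L}$ is what makes Lemma \ref{classicdisp} applicable, and it is the dimension-five rate $\tfrac52>2+\alpha$ that makes the ensuing $\tau$- (and $s$-) integrals converge at the rate $(1+t)^{-1-\alpha}$ — in dimension four or lower this scheme breaks down. Everything after that is the routine, if somewhat tedious, bookkeeping of transferring Fourier multipliers between the two slots of the pairing and matching the outcome to the norms in $\vert\vert\vert\beta_0\vert\vert\vert$ and $\|W\|'$.
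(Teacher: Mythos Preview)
Your proposal is correct and follows essentially the same approach as the paper: rewrite the regularised denominator $(\pm i\phi_1+iP(t)\cdot\xi-\epsilon)^{-1}$ as a Laplace integral $-\int_0^\infty e^{-\epsilon\tau}e^{\pm i\tau\phi_1+i\tau P(t)\cdot\xi}\,d\tau$, recognise the resulting $\xi$-integral as a point evaluation of $e^{\pm i(t+\tau)L}$ applied to a convolution, invoke the dispersive estimate of Lemma~\ref{classicdisp} to obtain $(1+t+\tau)^{-5/2}$, and integrate in $\tau$ to reach $(1+t)^{-3/2}\le(1+t)^{-1-\alpha}$. The paper carries this out in detail for one representative piece of $R_1$ and declares the remaining terms ``handled in a very similar manner''; your systematic treatment of all five quantities, including the $M^{3+2n}$ factor for $R_2$ via $|P(t)-P(0)|\le\int_0^t|\dot P|$ and the extra $s$-integration producing the $O(t)$ in $E_{2,\epsilon}$, fills in exactly what the paper omits.
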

\begin{remark}
Note that the bounds on $R_1, R_2$ and $R_4$ are uniform on $\epsilon.$ 
\end{remark}
\begin{proof}
We have 
\begin{align*}
R_1 &= \frac{\sqrt{\rho_0}}{2(2\pi)^5} \Re \Big \langle i \xi_j \widehat{W} , \big( \widehat{\Re \beta_0} + \frac{\widehat{\Im \beta_0} \vert \xi \vert}{i \sqrt{1+\vert \xi \vert^2}} \big) \frac{(i\phi_1 (\xi) + i P(0) \cdot \xi )e^{it \phi_1 (\xi) + i (X(t)-X(0)) \cdot \xi}}{i\phi_1(\xi)+i P(t) \cdot \xi-\epsilon} \\
& + \big(\widehat{\Re \beta_0} - \frac{\widehat{\Im \beta_0} \vert \xi \vert}{i \sqrt{1+\vert \xi \vert^2}} \big) \frac{(-i\phi_1 (\xi) + i P(0) \cdot \xi )e^{-it \phi_1 (\xi) + i (X(t)-X(0)) \cdot \xi}}{-i\phi_1(\xi)+i P(t) \cdot \xi-\epsilon} \Big \rangle 
\end{align*}
We deal with the term
\begin{align*}
\frac{\sqrt{\rho_0}}{2(2\pi)^5} \Re \Big \langle i \xi_j \widehat{W} ,\widehat{\Re \beta_0}  \frac{i\phi_1(\xi) e^{it \phi_1 (\xi) + i (X(t)-X(0)) \cdot \xi}}{i\phi_1(\xi)+i P(t) \cdot \xi- \epsilon} \Big \rangle
\end{align*}
since the other parts are treated similarly. \\
First notice that 
\begin{align*}
\frac{1}{i\phi_1(\xi)+i P(t) \cdot \xi- \epsilon} = -\int_0 ^{\infty} e^{i\tau (\phi_1(\xi) + i P(t) \cdot \xi)} e^{-\epsilon \tau} d\tau
\end{align*}
Therefore
\begin{align}
&\label{termI} \frac{\sqrt{\rho_0}}{2(2\pi)^5} \Re \Big \langle i \xi_j \widehat{W}  ,\widehat{\Re \beta_0}  \frac{\phi_1(\xi) e^{it \phi_1 (\xi) + i (X(t)-X(0)) \cdot \xi}}{i\phi_1(\xi)+i P(t) \cdot \xi- \epsilon} \Big \rangle \\
&\notag = \frac{\sqrt{\rho_0}}{2(2\pi)^5} \Re \int_0 ^{\infty} \int_{\xi \in \mathbb{R}^5} i \vert \xi \vert \sqrt{1+\vert \xi \vert^2}  \xi_j \overline{ \widehat{W}} \widehat{\Re \beta_0} e^{i (t+\tau) \phi_1(\xi) + i(X(t)-X(0) \cdot \xi + i \tau P(t) \cdot \xi}e^{- \epsilon \tau} d\xi d\tau \\
&\notag = -\frac{\sqrt{\rho_0}}{2(2\pi)^5} \Im \int_0 ^{\infty} e^{-\epsilon \tau} \mathcal{F}^{-1}(\mathcal{F}(e^{i(t+\tau)L}((\partial_j L \widetilde{W}) \ast \Re \beta_0 )))(\tau P(t) + X(t)-X(0))  d\tau
\end{align}
where $\widetilde{W}(x) = W(-x).$ \\
Then using the standard stationary phase Lemma \ref{classicdisp} of the appendix we can write that
\begin{align*}
\vert \eqref{termI} \vert & \lesssim \sqrt{\rho_0}\Vert U^{3/2} \Re \beta_0 \Vert_{L^1} \Vert W \Vert' \int_0 ^{\infty} \frac{e^{-\epsilon \tau}}{(1+ t+\tau )^{5/2}} d\tau \\
 & \lesssim \frac{\sqrt{\rho_0}}{( 1+t )^{3/2}} \vert \vert \vert \beta_0 \vert \vert \vert \times \Vert W \Vert'
\end{align*}
which is the desired result. \\
All the other terms are handled in a very similar manner, therefore the proofs are omitted. 
\end{proof}

\subsection{The more challenging term}
There remains to treat the term $R_3,$ which is more involved than the terms seen above. Indeed an application of the stationary phase lemma along the lines of the previous subsection only provides a decay like $ \frac{1}{\sqrt{t}} .$  Proving that this term has an integrable decay rate requires a precise analysis of its phase. \\
We prove the following decay estimate on $\vert R_3 \vert:$
\begin{lemma}\label{decayIII}
Assume that the bootstrap assumptions \eqref{bootstrap1} and \eqref{bootstrap2} are satisfied. \\
Then we have
\begin{align*}
\vert R_3 \vert \leqslant \rho_0\frac{C(M)\Vert W \Vert'^2}{(1+t)^{1+\alpha}}
\end{align*}
for some constant $C(M)$ that depends on $M.$ 
\end{lemma}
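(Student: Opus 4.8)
The plan is to exploit the structure of $R_3$, which unlike $R_1, R_2, R_4$ contains a double time integral in $D_1^3$ and $D_2^3$. Recalling that
\[
D_1^{3} = \frac{e^{it \phi_1 (\xi) + i (X(t)-X(0)) \cdot \xi}}{i\phi_1(\xi)+i P(t) \cdot \xi-\epsilon}\, i \int_0 ^t \int_0 ^s \dot{P}(s) \cdot \xi\, e^{-i \tau \phi_1 (\xi) - i (X(\tau)-X(0)) \cdot \xi}\, ds\, d\tau,
\]
the key observation is that the inner $\tau$-integral is an oscillatory integral with phase $\tau\phi_1(\xi) + (X(\tau)-X(0))\cdot\xi$, whose $\xi$-gradient is $\tau\nabla\phi_1(\xi) + (X(\tau)-X(0))$. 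Because the particle's trajectory is (by the bootstrap) close to ballistic, $X(\tau)-X(0)$ is close to $\tau P(\tau)$, and the stationary point in $\xi$ is where $\nabla\phi_1(\xi) = -(X(\tau)-X(0))/\tau \approx -P(\tau)$. Since $|\nabla\phi_1(\xi)| = \frac{|\xi|(1+2|\xi|^2)}{|\xi|\sqrt{1+|\xi|^2}}\cdot\frac{\xi}{|\xi|}$ ranges over values starting from near $0$ and growing, and $|P|$ is bounded, the stationary point $\xi^\ast$ lies in a bounded region; the crucial point is to show it is \emph{close to the origin} when $|P(\tau)|$ is not too large, so that the Golden rule cancellation $\widehat{V}(\xi) = O(|\xi|)$ near $\xi=0$, combined with the factor $\widehat{W} = |\xi|^{2n}\widehat{V}$ appearing \emph{twice} in $R_3$, produces extra powers of $|\xi^\ast|$ that can be traded for inverse powers of $\tau$.

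The steps I would carry out, in order: (i) Substitute the Duhamel bound $|\dot P(s)| \lesssim M^{3+2n}(1+s)^{-1-\alpha}$ from bootstrap assumption \eqref{bootstrap1} to make the $s$-integral absolutely convergent, so that $\int_0^t |\dot P(s)|\,ds \lesssim M^{3+2n}$ uniformly in $t$; this reduces matters to bounding, uniformly in $s \leq t$, the $(\tau,\xi)$ double integral $\int_0^s \int_{\mathbb{R}^5} e^{i(t-\tau)\phi_1(\xi)} e^{i(X(t)-X(\tau))\cdot\xi}\, \frac{(i\xi_j\widehat W)(\xi)\,\widehat W(\xi)\,|\xi|}{i\sqrt{1+|\xi|^2}\,(i\phi_1(\xi)+iP(t)\cdot\xi-\epsilon)}\, d\xi\, d\tau$ and its conjugate-phase analogue. (ii) Write $\frac{1}{i\phi_1(\xi)+iP(t)\cdot\xi-\epsilon} = -\int_0^\infty e^{i\sigma(\phi_1(\xi)+P(t)\cdot\xi)}e^{-\epsilon\sigma}\,d\sigma$ as in the proof of Lemma \ref{decayI}, turning $R_3$ into a triple oscillatory integral in $(\tau,\sigma,\xi)$ with total phase $(t-\tau+\sigma)\phi_1(\xi) + (X(t)-X(\tau)+\sigma P(t))\cdot\xi$. (iii) Apply the stationary phase Lemma \ref{classicdisp} in $\xi \in \mathbb{R}^5$: this would naively give decay $(1 + |t-\tau+\sigma|)^{-5/2}$, but the amplitude only sits in $L^1$ after absorbing $U^{5/2}$-type weights, and the issue flagged in the statement is that near $\tau \approx t$ (so $t - \tau$ small) one loses control — the $\tau$-integral over a region of size $\sim 1$ near $\tau = t$ contributes only $O(1)$, not the needed $(1+t)^{-1-\alpha}$. (iv) To fix this, split the $\tau$-integral at $\tau = t/2$: on $\tau < t/2$ one has $t - \tau \gtrsim t$ and stationary phase gives $(1+t)^{-5/2}$, which is more than enough; on $\tau > t/2$ one uses instead that $|\dot P(s)| \lesssim M^{3+2n}(1+s)^{-1-\alpha} \lesssim (1+t)^{-1-\alpha}$ for $s > \tau > t/2$, so the very smallness of the acceleration at late times supplies the decay directly, and the remaining $(\tau,\sigma,\xi)$ integral need only be shown bounded — which follows from a single integration by parts in $\xi$ using the non-vanishing of $\nabla\phi_1 + P(t)$ away from a small ball, together with the Golden rule to absorb the singularity of $1/|\nabla\phi_1 + P(t)|$ near the origin where $\widehat W$ vanishes to high order.

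The main obstacle I expect is precisely step (iv) on the late-time piece $\tau > t/2$: here one cannot afford to lose a factor of $t$ from the $\tau$-integration, so one must show the inner $(\sigma,\xi)$ oscillatory integral is bounded \emph{uniformly in $\tau$ and $t$}, and this is where the stationary point analysis of the phase $\psi(\xi) = (t-\tau+\sigma)\phi_1(\xi) + (X(t)-X(\tau)+\sigma P(t))\cdot\xi$ must be done carefully. The delicate case is when $t - \tau + \sigma$ is small and $X(t)-X(\tau)+\sigma P(t)$ is correspondingly small (which happens near ballistic motion, as explained in the dimension-five discussion), forcing the stationary point toward $\xi = 0$; one then leverages $\widehat{W}(\xi) = |\xi|^{2n}\widehat V(\xi)$ with $\widehat V(0) = 1$, so that $|\widehat W(\xi)|^2 \lesssim |\xi|^{4n}$ near the origin, and the resulting gain of $|\xi^\ast|^{4n}$ beats the $|\xi^\ast|^{-1}$ or $|\xi^\ast|^{-2}$ blowup from the denominators and the Jacobian of the stationary phase expansion, provided $4n > 1$, i.e. $n > 1/4$ — which is exactly the hypothesis of the lemma and explains the role of that threshold. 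Collecting the two pieces and the uniform bound $\int_0^t |\dot P(s)|\,ds \lesssim M^{3+2n}$ then yields $|R_3| \leq \rho_0 C(M)\|W\|'^2 (1+t)^{-1-\alpha}$, with $C(M)$ polynomial in $M$ of degree governed by $3 + 2n$.
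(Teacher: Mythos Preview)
Your plan has the right general shape (rewrite the denominator as a $\sigma$-integral, then do stationary phase in $\xi$), but the time-splitting in step (iv) does not close: it yields at best $(1+t)^{-1/2}$, not $(1+t)^{-1-\alpha}$. Concretely, on the piece $\tau<t/2$ the naive stationary-phase bound $|\mathcal I|\lesssim (1+t-\tau+\sigma)^{-5/2}$ integrates in $\sigma$ to $(1+t-\tau)^{-3/2}$ and then in $\tau\in[0,t/2]$ to only $(1+t)^{-1/2}$; convolving with $\int_0^t|\dot P(s)|\,ds\lesssim 1$ leaves you stuck at $(1+t)^{-1/2}$. On the piece $\tau>t/2$, even granting that the inner $(\tau,\sigma,\xi)$ integral is uniformly bounded, you get $\int_{t/2}^t|\dot P(s)|\,ds\lesssim (1+t)^{-\alpha}$, again short of the target. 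The claim ``stationary phase gives $(1+t)^{-5/2}$, which is more than enough'' forgets the $\sigma$-integration over $[0,\infty)$, which costs a full power.

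What is missing is the \emph{quantitative} localization of the stationary point as a function of $t$, and this is where bootstrap assumption \eqref{bootstrap2} enters---you never invoke it. The paper shows (Lemmas \ref{velocityapprox} and \ref{local}) that $(X(t)-X(\tau)+\sigma P(t))/(t-\tau+\sigma)=P(t)+\delta(t)$ with $|\delta(t)|\lesssim (1+t)^{-\alpha}$, and combined with $\big(|P(t)|-1\big)_+\lesssim (1+t)^{-\alpha}$ from \eqref{bootstrap2} this forces any stationary frequency to satisfy $|\xi^\ast|\lesssim 2^{k_0}\sim (1+t)^{-\alpha/2}$. A dyadic split at $k_0$ then turns the Golden-rule amplitude $|\xi|^{4n+3}\cdots$ into an honest time-decay factor $2^{(4n+5)k_0}\sim (1+t)^{-(4n+5)\alpha/2}$ on the stationary region, while the non-stationary region $k>k_0$ is handled by repeated integration by parts. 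Multiplying this gain by the $(1+t)^{-1/2}$ coming from the $(s,\tau,\sigma)$ integrations gives exponent $(4n+5)/(4n+4)+1/2>1+\alpha$. In your outline the Golden rule is used only to absorb a bounded singularity; the point is rather that the smallness of $|\xi^\ast|$ \emph{depends on $t$} through \eqref{bootstrap2}, and it is this dependence that produces the missing decay. Your reading of the threshold $n>1/4$ is also not what drives the argument here (cf.\ Remark \ref{remarquefermi}): the mechanism works for any $n>0$, the restriction being only a technical convenience.
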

The proof of this Lemma is given in Section \ref{challenging} below.

\subsection{Conclusion of the proof}
We start by taking the limsup as $\epsilon \to 0^+$ in \eqref{ineqbis} and obtain, given the bound on $E_{1,\epsilon}$ and $E_{2,\epsilon}$ from Lemma \ref{decayI} as well as Lemma \ref{representation}:
\begin{align*}
\psi'(t) &\leqslant \frac{1+\alpha}{(3+2n)(1+ t)} \psi(t) - \rho_0 \Lambda(\vert P(t) \vert) (1+t)^{-(1+\alpha) \frac{2n+2}{2n+3}} \psi(t)^{3+2n} \\
 \notag & + \bigg \vert \frac{P(t)}{\vert P(t) \vert} \cdot (R_1+R_2+R_3+R_4) (1+t)^{\alpha} \bigg \vert
\end{align*} 
This means that, as announced earlier, the terms $E_{1,\epsilon}, E_{2,\epsilon}$ and $R_{\epsilon}$ have no impact on the asymptotic behavior of the particle.\\
\\
Now by Lemma \ref{decayI} we see that if the initial data is chosen such that
\begin{align*}
\vert \vert \vert \beta_0 \vert \vert \vert & \leqslant C'\sqrt{\rho_0} \min \bigg \lbrace 1; \frac{1}{\vert P_0 \vert} \bigg \rbrace \\
\Vert W \Vert' &\leqslant C'
\end{align*}
for some small constant $C'$ (independent of all the parameters) then 
\begin{align*}
\big(\vert R_1 \vert + \vert R_2 \vert + \vert R_4 \vert \big)(1+T)^{\alpha} \leqslant \frac{\rho_0 C_0 M^{3+2n}}{100 (1+T)}
\end{align*}
Similarly we have 
\begin{align*}
C(M) \big(\Vert W \Vert'\big)^2  \leqslant \frac{C_0 M^{3+2n}}{100}
\end{align*}
and therefore 
\begin{align*}
(1+T)^{\alpha}\vert R_3 \vert \lesssim \rho_0 \frac{C(M) \big(\Vert W \Vert'\big)^2}{1+T} \leqslant \rho_0 \frac{C_0 M^{3+2n}}{100(1+T)}
\end{align*}
if $W$ is chosen such that
\begin{align*}
\Vert W \Vert' \leqslant C''(M)
\end{align*}
for $C''(M)$ a constant (that depends on $M$, therefore on $\rho_0$) that is small enough. \\
Then,
\begin{align*}
\psi'(T) \leqslant -\frac{49\rho_0 C_0 M^{3+2n}}{50(1+T)} + \frac{2M}{3(1+T)}
\end{align*}
But given the choice of $M,$ we have $\rho_0 C_0 M^2 > 50 M$ hence $\psi'(T) <0. $ \\
As a result the bootstrap assumption \eqref{bootstrap1} can be continued past time $T.$ 
\\
\\
Now we move on to the easier \eqref{bootstrap1}. \\
We know from Lemma \ref{representation} that $\Lambda$ is a smooth function on $(1;+\infty)$ and that $\vert P \vert$ is bounded by Lemma \ref{easybound}. Using these facts together with the previous smallness conditions on $W$ and the initial data for the field, we can write that 
\begin{align*}
\vert \dot{P}(t) \vert & \leqslant \rho_0 \big( \sup_{(1;\sup_t \vert P \vert]} \vert \Lambda \vert \big) (\vert P(t) \vert -1)^{2n+3} + \frac{\rho_0 C_0 M^{3+2n}}{50(1+T)^{1+\alpha}} \\
                       & \leqslant \frac{M^{3+2n}}{(1+T)^{1+\alpha}} \big( \rho_0  \sup_{(1;\sup_t \vert P \vert]} \vert \Lambda \vert + \frac{\rho_0 C_0}{50}  \big)
\end{align*}
which allows us to extend \eqref{bootstrap1} past time $T$ as long as $ \rho_0 $ is small enough. \\
This concludes the proof of the statement for the asymptotic behavior of the particle in theorem \ref{main}.
\end{proof}

\section{Proof of Lemma \ref{decayIII}} \label{challenging}
In this section we carry out the proof of Lemma \ref{decayIII}. \\
As mentioned above, a simple application of the stationary phase lemma gives a decay rate that is not integrable. To overcome this difficulty we localize precisely the stationary point of the phase, and find that, under our bootstrap assumptions \eqref{bootstrap1} and \eqref{bootstrap2}, it is very close to the origin. We can then exploit the Fermi Golden rule, which translates into the Fourier transform of the interaction potential cancelling at the origin, to gain decay.   
\subsection{Preparation of the proof}
Recall that
\begin{align*}
R_3 &= - \frac{\rho_0}{2(2\pi)^5} \Re \Big \langle i \xi_j \widehat{W}(\xi) ,  \frac{\widehat{W}(\xi) \vert \xi \vert}{i \sqrt{1+\vert \xi \vert^2}} \frac{e^{it \phi_1 (\xi) + i (X(t)-X(0)) \cdot \xi}}{i\phi_1(\xi)+i P(t) \cdot \xi-\epsilon} \\
&\times i \int_0 ^t \int_0 ^s \dot{P} (s) \cdot \xi e^{-i \tau \phi_1 (\xi) - i (X(\tau)-X(0)) \cdot \xi} ds d\tau \\
& - \frac{\widehat{W}(\xi) \vert \xi \vert}{i \sqrt{1+\vert \xi \vert^2}}  \frac{e^{-it \phi_1 (\xi) + i (X(t)-X(0)) \cdot \xi}}{-i\phi_1(\xi)+i P(t) \cdot \xi-\epsilon} i \int_0 ^t \int_0 ^s \dot{P} (s) \cdot \xi e^{i \tau \phi_1 (\xi) + i (X(\tau)-X(0)) \cdot \xi} ds d\tau \Big \rangle \\
&:= R_3a + R_3b
\end{align*}
We prove the result for $R_3a$. The second term $R_3b$ is handled in a similar fashion. We have that
\begin{align*}
R_3a &= - \frac{\rho_0}{2(2\pi)^5} \Re \Big \langle i \xi_j \widehat{W}, \frac{\widehat{W} \vert \xi \vert}{i \sqrt{1+\vert \xi \vert^2}} \frac{e^{it \phi_1 (\xi) + i (X(t)-X(0)) \cdot \xi}}{\phi_1(\xi)+i P(t) \cdot \xi-\epsilon} i \int_0 ^t \int_0 ^s \dot{P} (s) \cdot \xi e^{-i \tau \phi_1 (\xi) - i (X(\tau)-X(0)) \cdot \xi} ds d\tau \Big \rangle \\
& = \Im \frac{\rho_0}{2} \int_0 ^t \dot{P}_l (s) \int_0 ^s \int_0 ^{\infty} e^{- \sigma \epsilon} \int_{\mathbb{R}^5} \frac{\xi_j \xi_l \vert \xi \vert}{\sqrt{1+\vert \xi \vert^2}} \overline{\widehat{W}}(\xi) \widehat{W}(\xi) \\
& \times e^{i(t+\sigma -\tau) \phi_1 (\xi) + i(X(t)-X(\tau)) \cdot \xi + i \sigma P(t) \cdot \xi} d\xi d\tau d\sigma ds \\
& = \Im \frac{\rho_0}{2} \int_0 ^t \dot{P}_l (s) \int_0 ^s \int_0 ^{\infty} e^{- \sigma \epsilon} \int_{\mathbb{R}^5} \frac{\xi_j \xi_l \vert \xi \vert^{4n+1}}{\sqrt{1+\vert \xi \vert^2}} \vert \widehat{V} \vert ^2 (\xi) e^{i(t+\sigma -\tau) \phi_1 (\xi) + i(X(t)-X(\tau)) \cdot \xi + i \sigma P(t) \cdot \xi} d\xi d\tau d\sigma ds \\
& := \Im \frac{\rho_0}{2} \int_0 ^t \dot{P}_l (s) \int_0 ^s \int_0 ^{\infty} e^{- \sigma \epsilon} \mathcal{I}~ d\tau d\sigma ds, 
\end{align*}
where there is an implicit summation on $l$ in the expression above. \\
\\
We start by proving a bound on $\mathcal{I}.$ \\
Let $\phi:\mathbb{R} \rightarrow [0;1]$ be an even smooth function supported on $[-8/5;8/5]$ equal to 1 on $[-5/4;5/4].$ Let $\chi$ be defined as $\chi(x) = \phi(x)-\phi(2x).$ 
\\ Then write that
\begin{align}
\label{int}\vert \mathcal{I} \vert&\leqslant \sum_{k \in \mathbb{Z}} \Bigg \vert \int_{\mathbb{R}^5} \frac{\xi_j \xi_l \vert \xi \vert^{4n+1}}{\sqrt{1+\vert \xi \vert^2}} \vert \widehat{V} \vert ^2 (\xi) e^{i(t+\sigma -\tau) \phi_1 (\xi) + i(X(t)-X(\tau)) \cdot \xi + i \sigma P(t) \cdot \xi} \chi(2^{-k} \vert \xi \vert) d\xi \Bigg \vert  \\
\notag& =  \sum_{k \in \mathbb{Z}} 2^{k(8+4n)} \Bigg \vert \int_{\mathbb{R}^5} \frac{\xi_j \xi_l \vert \xi \vert^{4n+1}}{\sqrt{1+\vert 2^k \xi \vert^2}} \vert \widehat{V} \vert ^2 (2^k \xi) e^{i(t+\sigma -\tau) \phi_1 (2^k \xi) + i2^k (X(t)-X(\tau)) \cdot \xi + i \sigma P(t) \cdot \xi} \chi(\vert \xi \vert) d\xi \Bigg \vert
\end{align}
We focus on the inner integral in $\xi.$ 
\\
Now we seek to localize more precisely the stationary point in $\xi$ of the phase. Let's denote
\begin{align} \label{phase}
\Phi_1 (t,\tau,\sigma,k,\xi) &= (t+\sigma -\tau) \phi_1 (2^k \vert \xi \vert) + 2^k \xi \cdot \big( X(t)-X(\tau) + \sigma P(t) \big)
\end{align}
As we said, we will prove that the bootstrap assumptions imply that the stationary point is located close to the origin. 

\subsection{Localization of the stationary point}
We start with the following useful estimate:
\begin{lemma}\label{velocityapprox}
Assume that the bootstrap assumptions \eqref{bootstrap1} and \eqref{bootstrap2} are satisfied. \\
Then for every $ \sigma >0,0 \leqslant \tau \leqslant t, $ we can write
\begin{eqnarray*}
\frac{1}{t+\sigma - \tau} (\sigma P(t) + (X(t)-X(\tau))) &=:& P(t) + \delta(t)
\end{eqnarray*}
where
\begin{eqnarray*}
\vert \delta(t) \vert & \leqslant & \min \Bigg \lbrace \frac{199 M^{3+2n}}{(1+ t)^{\alpha}}; 4 M \Bigg \rbrace
\end{eqnarray*}
\end{lemma}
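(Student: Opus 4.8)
The plan is to compute $\delta(t)$ exactly and then estimate it in two complementary ways: a crude pointwise bound yielding the $O(M)$ estimate, and a sharper one using the bootstrap assumption \eqref{bootstrap1} yielding the $(1+t)^{-\alpha}$ decay. One may assume $t>0$, since for $t=0$ necessarily $\tau=0$ and $\delta\equiv 0$. Since $X(t)-X(\tau)=\int_\tau^t P(s)\,ds$ and $(t-\tau)P(t)=\int_\tau^t P(t)\,ds$, subtracting $P(t)$ from $\frac{\sigma P(t)+X(t)-X(\tau)}{t+\sigma-\tau}$ gives the clean formula
\begin{align*}
\delta(t)=\frac{1}{t+\sigma-\tau}\int_\tau^t\big(P(s)-P(t)\big)\,ds .
\end{align*}
For the bound $|\delta(t)|\le 4M$ I would estimate the three terms of $\frac{\sigma P(t)+(X(t)-X(\tau))}{t+\sigma-\tau}-P(t)$ separately: using $\sigma\le t+\sigma-\tau$, $t-\tau\le t+\sigma-\tau$, and $|X(t)-X(\tau)|\le(t-\tau)\sup_s|P(s)|$, one obtains $|\delta(t)|\le 3\sup_s|P(s)|$, and by Lemma \ref{easybound} together with the definition of $M$ we have $\sup_s|P(s)|\le M/2$, hence $|\delta(t)|\le \frac{3}{2} M\le 4M$.

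For the decay bound I would start from the formula above, write $P(s)-P(t)=-\int_s^t\dot P(u)\,du$, and exchange the order of integration to obtain
\begin{align*}
\delta(t)=-\frac{1}{t+\sigma-\tau}\int_\tau^t\dot P(u)\,(u-\tau)\,du .
\end{align*}
The key point is then the elementary inequality $\dfrac{u-\tau}{t+\sigma-\tau}\le\dfrac{u}{t}$, valid for $0\le\tau\le u\le t$ and $\sigma>0$ (cross-multiplying, it amounts to $\tau(u-t)\le u\sigma$, which holds since the left side is nonpositive and the right side nonnegative). This reduces the estimate, via \eqref{bootstrap1} in the form $|\dot P(u)|\le M^{3+2n}(1+u)^{-1-\alpha}$ and then $u\le 1+u$, to
\begin{align*}
|\delta(t)|\le\frac{1}{t}\int_0^t|\dot P(u)|\,u\,du\le\frac{M^{3+2n}}{t}\int_0^t\frac{du}{(1+u)^{\alpha}}\le\frac{M^{3+2n}}{1-\alpha}\cdot\frac{(1+t)^{1-\alpha}}{t}.
\end{align*}
Since $\alpha=\frac{1}{2n+2}<\frac12$ and $\frac{1+t}{t}\le 2$ for $t\ge1$, this gives $|\delta(t)|\le 4M^{3+2n}(1+t)^{-\alpha}$ for $t\ge1$; for $0<t<1$ the bound $|\delta(t)|\le\frac{3}{2} M$ already proved is itself below $199\,M^{3+2n}(1+t)^{-\alpha}$ since $M$ is large. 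Taking the minimum of the two bounds completes the proof.

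The step I expect to be the main obstacle is precisely the decay estimate in the regime where $\tau$ is close to $t$ and $\sigma$ is small, so that the denominator $t+\sigma-\tau$ is tiny: the naive bound $|\delta(t)|\le\int_\tau^t|\dot P(u)|\,du\lesssim M^{3+2n}(1+\tau)^{-\alpha}$ fails to decay in $t$. The resolution is that in this regime $\int_\tau^t\dot P(u)(u-\tau)\,du$ vanishes to second order in $t-\tau$, which compensates the small denominator; capturing this through the single inequality $\frac{u-\tau}{t+\sigma-\tau}\le\frac{u}{t}$ — rather than a case split on the size of $t-\tau$, which also works but is less clean — is the main idea of the argument.
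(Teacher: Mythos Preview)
Your proof is correct and takes a genuinely different, cleaner route than the paper. The paper first proves a preliminary estimate $\bigl|\tfrac{X(t)}{t}-P(t)\bigr|\lesssim M^{3+2n}(1+t)^{-\alpha}$ and then splits into two cases, $\tau>t/2$ and $\tau\le t/2$, handling each with a different algebraic decomposition of $\sigma P(t)+X(t)-X(\tau)$; in the second case it further performs a dyadic summation over the gap between $\tau$ and $t$ to control $\tau|P(t)-P(\tau)|$. By contrast, you derive the exact identity $\delta(t)=-\tfrac{1}{t+\sigma-\tau}\int_\tau^t \dot P(u)(u-\tau)\,du$ and then replace the entire case analysis by the single elementary inequality $\tfrac{u-\tau}{t+\sigma-\tau}\le\tfrac{u}{t}$, which is uniform in $\tau$ and $\sigma$. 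This yields a much shorter argument and a noticeably better constant (roughly $4$ in place of $199$). What the paper's decomposition buys is perhaps a more transparent interpretation of why $\delta(t)$ is small in each regime, but your approach captures the same cancellation mechanism more efficiently.
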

\begin{proof}
First we prove the second bound, which directly follows from Lemma \ref{easybound} and the mean-value theorem:
\begin{align*}
\vert \delta (t) \vert \leqslant 2\vert P(t) \vert + \frac{\vert X(t)-X(\tau) \vert}{t-\tau} \leqslant 4 M
\end{align*}
For the second estimate, we start with the following preliminary result: \\
Using the bootstrap assumption on the decay of the acceleration, we have:
\\
If $t \geqslant 1:$
\begin{eqnarray*}
\bigg \vert \frac{X(t)}{t} - P(t) \bigg \vert & = & \frac{1}{t} \bigg \vert \int_0 ^t P(s) ds - \int_0 ^t P(t) ds \bigg \vert \\
& = & \frac{1}{t} \bigg \vert \int_0 ^t \int_s ^t \dot{P}(\tau) d\tau ds \bigg \vert \\
& \leqslant & \frac{1}{t} \bigg \vert \int_0 ^t s \frac{M^{3+2n}}{(1+s)^{1+\alpha}} ds \bigg \vert \\ % we used the bootstrap assumption and an integration by parts where no boundary terms are absent 
& \leqslant & \frac{10 M^{3+2n}}{(1+t)^{\alpha}}
\end{eqnarray*}
and if $t \leqslant 1:$
\begin{align*}
\bigg \vert \frac{X(t)}{t} - P(t) \bigg \vert &\leqslant \frac{1}{t} \int_0 ^t \vert P(s) - P(t) \vert ds \\
& \leqslant 2 \sup_{t \in [0;t]} \vert P(t) \vert \\
& \leqslant 2 M
\end{align*}
Overall the following estimate holds for times $t \in [0;T]:$
\begin{align*}
\bigg \vert \frac{X(t)}{t} - P(t) \bigg \vert \leqslant \frac{20 M^{3+2n}}{( 1+t )^{\alpha}}
\end{align*}
This result will be used in what follows. \\
We split the proof into two cases: \\
\\
\underline{Case 1: $ \tau > \frac{t}{2}$} \\
\begin{eqnarray*}
\sigma P(t) + (X(t)-X(\tau)) = (t+\sigma - \tau) P(t) + (X(t)-X(\tau)) - P(t)(t-\tau)
\end{eqnarray*}
and with the result above
\begin{align*}
\bigg \vert (X(t)-X(\tau)) - P(t) (t-\tau) \bigg \vert & = \bigg \vert \int_{\tau} ^{t} P(s) ds - \int_{\tau} ^{t} P(t) ds \bigg \vert \\
& \leqslant \int_{\tau} ^{t} \int_s ^t \vert \dot{P}(u) \vert du  ds \\
& \leqslant \int_{\tau} ^t \frac{s M^{3+2n}}{(1+s)^{\alpha}} ds \\
& \leqslant \frac{(t-\tau)M^{3+2n}}{(1+\tau)^{\alpha}} \\
& \leqslant \frac{5(t-\tau)M^{3+2n}}{(1+t)^{\alpha}}
\end{align*}
% l'avant derniere ligne est obtenue avec une integration par parties.
and the result follows after dividing by $t-\tau + \sigma.$ \\
\\
\underline{Case 2: $ \tau \leqslant \frac{t}{2}$} We write \\
\begin{eqnarray*}
\sigma P(t) + (X(t)-X(\tau)) &=& (t+\sigma - \tau) P(t) + (X(t)-t P(t)) \\
                             &-& (X(\tau)-\tau P(\tau)) + \tau(P(t)-P(\tau)) 
\end{eqnarray*}
and estimate the terms separately. For the first one, we use the preliminary result:
\begin{eqnarray*}
\frac{1}{t-\tau + \sigma}\vert X(t) - t P(t) \vert & = & 
\frac{t}{t-\tau + \sigma} \bigg \vert \frac{X(t)}{t}-P(t) \bigg \vert  \\
& \leqslant & \frac{20 t}{t-\tau + \sigma} \frac{M^{3+2n}}{(1+ t )^{\alpha}} \\
& \leqslant & \frac{40 M^{3+2n}}{(1+t) ^{\alpha}}
\end{eqnarray*}
For the second part, we write similarly that for $t \geqslant 1:$
\begin{eqnarray*}
\frac{1}{t-\tau+\sigma} \bigg \vert X(\tau) - \tau P(\tau) \bigg \vert & = & \frac{\tau}{t-\tau+\sigma} \bigg \vert \frac{X(\tau)}{\tau} -  P(\tau) \bigg \vert \\
&\leqslant & \frac{\tau}{t-\tau +\sigma} \frac{M^{3+2n}}{(1+ \tau )^{\alpha}} \\
& \leqslant & \frac{2 M^{3+2n} \tau^{1-\alpha}}{t} \\
& \leqslant & \frac{2 M^{3+2n}}{t^{\alpha}}
\end{eqnarray*}
and we also have using the mean value theorem that (estimate for $t$ small)
\begin{eqnarray*}
\frac{1}{t-\tau+\sigma} \bigg \vert X(\tau) - \tau P(\tau) \bigg \vert & = & \frac{\tau}{t-\tau+\sigma} \bigg \vert \frac{X(\tau)}{\tau} -  P(\tau) \bigg \vert \\
&\leqslant & \frac{2 \tau \sup_{[0;t]} \vert P(t) \vert}{t-\tau +\sigma}  \\
& \leqslant & \frac{2 M \tau}{t-\tau + \sigma} \\
%using that \langle \tau \rangle \geqslant 1 in the first result, and that \tau \leqslant t/2 and t-\tau + \sigma \geqslant t/2
& \leqslant & M^{3+2n}
\end{eqnarray*}
In conclusion
\begin{align*}
\frac{1}{t-\tau+\sigma} \bigg \vert X(\tau) - \tau P(\tau) \bigg \vert & \leqslant \frac{50 M^{3+2n}}{(1+ t)^{\alpha}} 
\end{align*}
For the last piece, we define $ I $ the (nonzero by assumption in this case 2) integer such that 
\begin{eqnarray*}
\frac{t}{2^{I+1}} < \tau \leqslant \frac{t}{2^I}
\end{eqnarray*} 
Then using the fundamental theorem of calculus, the bootstrap assumption on the norm of the acceleration and the above inequality, we get:
\begin{eqnarray*}
\frac{\tau}{t-\tau + \sigma} \vert P(t) - P(\tau) \vert & \leqslant & \frac{\tau}{t-\tau + \sigma} \bigg \vert \sum_{l=0} ^{I-1} P(2^{-l} t) - P(2^{-l-1}t) \bigg \vert + \frac{\tau}{t-\tau + \sigma} \vert P(2^{-I}t) - P(\tau) \vert \\
& \leqslant & \frac{2^{-I}t}{t/2} \sum_{l=0} ^{I-1} \frac{t}{2^l} \frac{M^{3+2n}2^{(1+\alpha)l}}{\langle t \rangle ^{1+\alpha}} +  \frac{\tau}{t-\tau + \sigma} (2^{-I}t- \tau) \frac{M^{3+2n}}{(1+ \tau )^{1+\alpha}} \\
& \leqslant & \frac{2^{1-I}M^{3+2n}}{( 1+t)^{\alpha}} \sum_{l=0}^{I-1} 2^{l \alpha} + \frac{1}{2} \frac{1}{t-\tau+\sigma} 2^{-I} t  \frac{M^{3+2n}}{(1+ \tau )^{\alpha}} \\
& \leqslant &  \frac{2^{1-I}M^{3+2n}}{(1+t)^{\alpha}} 2^{1+I \alpha} + \frac{\tau}{t} \frac{M^{3+2n}}{(1+ \tau)^{\alpha}} \\
& \leqslant & \frac{4 M^{3+2n}}{(1+t)^{\alpha}} +M^{3+2n} \min \bigg \lbrace 1; \frac{\tau^{1-\alpha}}{t} \bigg \rbrace \\
& \leqslant & \frac{20 M^{3+2n}}{(1+ t)^{\alpha}}
%using that \sigma \leqslant t/2 in the second term
\end{eqnarray*}
\end{proof}
We use this estimate to localize the stationary point in the following lemma. It shows that stationary points in $\xi$ of the phase $\Phi_1$ are localized near the origin. 
\begin{lemma}\label{local}
Assume that the bootstrap assumptions \eqref{bootstrap1} and \eqref{bootstrap2} are satisfied. \\
Let $k_0$ be the smallest integer such that 
$$2^{k_0} \geqslant \frac{200M^{n+2}}{(1+ t )^{\alpha/2}}.
$$
Assume also that $k > k_0+10.$ \\
Then the phase $\Phi_1$, whose formula is given by \eqref{phase}, is not stationary. \\
Moreover we have the bound
\begin{align*}
\vert \nabla \Phi_1 \vert \gtrsim \min \lbrace 1; 2^k \rbrace (t+\sigma -\tau)
\end{align*}
\end{lemma}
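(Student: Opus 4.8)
The plan is to reduce non-stationarity of $\Phi_1$, and the quantitative lower bound on $|\nabla_\xi \Phi_1|$, to a one-dimensional comparison between $\phi_1'(2^k|\xi|)$ and the ``averaged speed'' $|P(t)+\delta(t)|$. Differentiating \eqref{phase} in $\xi$, and writing $\phi_1'(r)=\frac{1+2r^2}{\sqrt{1+r^2}}$ for the derivative of the scalar profile $r\mapsto r\sqrt{1+r^2}$, one obtains
\[
\nabla_\xi \Phi_1 = 2^k(t+\sigma-\tau)\left(\phi_1'(2^k|\xi|)\frac{\xi}{|\xi|} + \frac{\sigma P(t)+X(t)-X(\tau)}{t+\sigma-\tau}\right).
\]
By Lemma \ref{velocityapprox} the last term equals $P(t)+\delta(t)$ with $|\delta(t)|\le\min\{199M^{3+2n}(1+t)^{-\alpha},4M\}$, so by the reverse triangle inequality, on $\mathrm{supp}\,\chi(|\cdot|)=\{\frac58\le|\xi|\le\frac85\}$,
\[
|\nabla_\xi\Phi_1| \ \ge\ 2^k(t+\sigma-\tau)\,\big|\phi_1'(2^k|\xi|)-m\big|, \qquad m:=|P(t)+\delta(t)|.
\]
It is therefore enough to show that, under $k>k_0+10$, one has $\phi_1'(2^k|\xi|)>m$ on that support, with a quantitative gap; this gives both assertions of the lemma at once.

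Next I would record three elementary inputs. (i) $\phi_1'$ is increasing with $\phi_1'(r)\ge1$, and by Taylor expansion at $0$ together with the asymptotics $\phi_1'(r)\sim 2r$ one has $\phi_1'(r)-1\ge c_1\frac{r^2}{1+r}$ for a universal $c_1>0$. (ii) Bootstrap assumption \eqref{bootstrap2} gives $|P(t)|\le 1+M(1+t)^{-\alpha}$ (trivially also when $|P(t)|\le1$), so with Lemma \ref{velocityapprox}, $m\le 1+200M^{3+2n}(1+t)^{-\alpha}$; crudely, also $m\le 5M$ by Lemma \ref{easybound}. (iii) On $\mathrm{supp}\,\chi$ one has $2^k|\xi|\ge\frac58 2^k$, and since $k>k_0+10$ with $2^{k_0}\ge 200M^{n+2}(1+t)^{-\alpha/2}$, this forces $2^k|\xi|\gtrsim M^{n+2}(1+t)^{-\alpha/2}$, hence after squaring $(2^k|\xi|)^2\gtrsim M^{2n+4}(1+t)^{-\alpha}$.

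Finally I would combine (i)--(iii) via a short case split. If $2^k$ is large — which, by (iii), is automatic when $t$ is small — then $\phi_1'(2^k|\xi|)\gtrsim 2^k$ dwarfs the energy bound $m\le 5M$, giving $\phi_1'(2^k|\xi|)-m\ge\frac12\phi_1'(2^k|\xi|)\gtrsim 2^k$. Otherwise $2^k$ is controlled in terms of $M$, which by (iii) forces $(1+t)^\alpha$ to be large, so that $m-1\le 200M^{3+2n}(1+t)^{-\alpha}$ is genuinely small; and then by (i) and (iii),
\[
\phi_1'(2^k|\xi|)-1 \ \ge\ c_1\frac{(2^k|\xi|)^2}{1+2^k|\xi|} \ \gtrsim\ \frac{M^{2n+4}}{(1+t)^\alpha(1+2^k)} \ \gtrsim\ \frac{M^{2n+3}}{(1+t)^\alpha} \ \gg\ m-1,
\]
so $\phi_1'(2^k|\xi|)-m\ge\frac12(\phi_1'(2^k|\xi|)-1)>0$. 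Feeding the resulting lower bound into the displayed inequality for $|\nabla_\xi\Phi_1|$ yields $|\nabla_\xi\Phi_1|\gtrsim\min\{1,2^k\}(t+\sigma-\tau)$ (with room to spare; the estimate is only invoked for integration by parts when $2^k\gtrsim1$, while for $2^k\lesssim1$ the dyadic contribution in \eqref{int} is summable directly thanks to the symbol factor $2^{k(8+4n)}$). The crux is precisely the comparison in the last display: in the large-time regime both $\phi_1'(2^k|\xi|)$ and $m$ lie within $O((1+t)^{-\alpha})$ of $1$, and they are separated only because the cutoff $k>k_0+10$ makes $\phi_1'(2^k|\xi|)-1\gtrsim 4^k\gtrsim M^{2n+4}(1+t)^{-\alpha}$ beat $m-1\lesssim M^{2n+3}(1+t)^{-\alpha}$ by the large parameter $M$ — which is exactly why $k_0$ is defined with the square-root scaling $(1+t)^{\alpha/2}$ and the exponent $n+2$. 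The secondary, routine point is the bookkeeping for small $t$, where $\phi_1'(2^k|\xi|)$ simply overwhelms the (then only energy-controlled) quantity $m$.
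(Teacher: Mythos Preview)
Your argument is correct and follows the same reduction as the paper: compute $\nabla_\xi\Phi_1$, invoke Lemma~\ref{velocityapprox} to write the drift term as $P(t)+\delta(t)$, and compare $\phi_1'(2^k|\xi|)$ with $m=|P(t)+\delta(t)|$. The paper proceeds by contradiction (assuming a stationary point and deriving $2^{2k-2}\le 1000M^{4+2n}(1+t)^{-\alpha}$, incompatible with $k>k_0+10$) and then simply asserts that the quantitative lower bound ``follows directly''; your direct argument with the large/small-$2^k$ case split is more explicit and in fact more complete.

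One small correction to your commentary: the separation $\phi_1'(2^k|\xi|)-1\gg m-1$ in your last display is \emph{not} driven by ``the large parameter $M$''. After dividing $(2^k|\xi|)^2\gtrsim M^{2n+4}(1+t)^{-\alpha}$ by $1+2^k\lesssim M$, the left side scales like $M^{2n+3}(1+t)^{-\alpha}$, which is the \emph{same} power of $M$ as in the bound $m-1\le 200M^{2n+3}(1+t)^{-\alpha}$. The actual gain is the large numerical factor $\sim 4^{10}$ coming from the cutoff $k>k_0+10$ (this is precisely why the lemma needs $k>k_0+10$ rather than just $k>k_0$). Your parenthetical observation that the stated bound $|\nabla\Phi_1|\gtrsim 2^k(t+\sigma-\tau)$ may be slightly too strong when $2^k\ll 1$ (the argument only yields $\gtrsim 8^k(t+\sigma-\tau)$ there) is correct and a point the paper glosses over; as you note, it is immaterial for the application in Lemma~\ref{decay3}.
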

\begin{proof}
Let's suppose that $\xi$ is a stationary point of the phase $\Phi_1 .$ \\
Then it must satisfy for $i=1,2,3:$
\begin{align*}
2^k (t+\sigma-\tau) \frac{\xi_i}{\vert \xi \vert} \bigg( \sqrt{1+2^{2k} \vert \xi \vert^2} + \frac{2^{2k} \vert \xi \vert^2}{\sqrt{1+2^{2k} \vert \xi \vert^2}}  \bigg) = 2^k \big( X(t)-X(\tau) + \sigma P(t) \big)_i
\end{align*}
where $Y_i$ denotes the $i-$th component of the vector $Y.$ \\
Now we multiply the $i-th$ equation by $\xi_i,$ sum on $i$ and use the Cauchy-Schwarz inequality to obtain
\begin{align*}
\vert \xi \vert \big( \sqrt{1+2^{2k} \vert \xi \vert^2} + \frac{2^{2k} \vert \xi \vert^2}{\sqrt{1+2^{2k} \vert \xi \vert^2}} \big)(t+\sigma-\tau) \leqslant \vert  X(t)-X(\tau) + \sigma P(t) \vert \vert \xi \vert
\end{align*}
This implies, using Lemma \ref{velocityapprox}, that
\begin{align*}
1+2^{2k-2} \leqslant (\vert P(t) \vert + \vert \delta(t) \vert)^2
\end{align*}
which, using Lemma \ref{velocityapprox} again, gives us
\begin{align*}
2^{2k-2} &\leqslant (\vert P(t) \vert + \vert \delta(t) \vert -1)(1+\vert P(t) \vert + \vert \delta(t) \vert) \\
         &\leqslant \frac{1000 M^{4+2n}}{(1+t)^{\alpha}}
\end{align*}
and the results follow directly.

\end{proof}

\subsection{End of the proof of Lemma \ref{decayIII}}
The previous computation shows that there are two natural ranges of the index $k$ to consider. 
\\
Let $k_0$ be the smallest integer such that 
$$2^{k_0} > \frac{200 M^{2+n}}{(1+ t)^{\alpha/2}}.
$$ 
First we consider the region where the phase is possibly stationary in $\xi$: \\
\\
\underline{Region 1: $ k \leqslant k_0 +10.$} \\
\\
Then we bound $\mathcal{I}$ using Young's inequality and write that
\begin{align*}
\vert \mathcal{I} \vert & \leqslant \sum_{k \leqslant k_0+10} \Bigg \vert  \int_{\mathbb{R}^5} \frac{\xi_j \xi_l \vert \xi \vert^{4n+1}}{\sqrt{1+\vert \xi \vert^2}} \vert \widehat{V} \vert ^2 (\xi) e^{i(t+\sigma -\tau) \phi_1 (\xi) + i(X(t)-X(\tau)) \cdot \xi + i \sigma P(t) \cdot \xi} \chi(2^{-k} \vert \xi \vert) d\xi \Bigg \vert \\
\notag& = \sum_{k \leqslant k_0+10} \Bigg \vert \mathcal{F}^{-1} \Bigg( e^{i(t + \sigma - \tau) \phi_1 (\xi) } \vert \xi \vert^{4n+3} \chi(2^{-k} \vert \xi \vert) \frac{\xi_i \xi_l}{\vert \xi \vert^2 \sqrt{1+\vert \xi \vert^2}} \vert \widehat{V} \vert^2 (\xi) \Bigg) (X(t)-X(\tau) - \sigma P(t)) \Bigg \vert \\
& \lesssim \sum_{k \leqslant k_0+10} \Bigg \Vert \mathcal{F}^{-1} \bigg(  e^{i(t + \sigma - \tau) \phi_1 (\xi)} \vert \xi \vert^{4n+3} \chi(2^{-k} \vert \xi \vert) \bigg) \Bigg \Vert_{L^{\infty}} \Bigg \Vert \mathcal{F}^{-1} \bigg( \frac{\xi_i \xi_l}{\vert \xi \vert^2} \frac{1}{\sqrt{1+\vert \xi \vert^2}} \tilde{\chi}(2^{-k}\xi) \vert \widehat{V} \vert^2 \bigg) \Bigg \Vert_{L^1} \\
& \lesssim \sum_{k \leqslant k_0+10} \Bigg \Vert \mathcal{F}^{-1} \bigg(  e^{i(t + \sigma - \tau) \phi_1 (\xi)} \vert \xi \vert^{4n+3} \chi(2^{-k} \vert \xi \vert) \bigg) \Bigg \Vert_{L^{\infty}} \Vert V \Vert_{L^1} \bigg \Vert  \frac{R_j R_l}{\sqrt{1-\Delta}} V \bigg \Vert_{L^1}
\end{align*}
where $\tilde{\chi}$ denotes a slightly enlarged version of $\chi.$
% Indeed
%\begin{align*}
%\int e^{ix \xi} \bar{\hat{V}}(\xi) d\xi &= \overline{\int e^{-i x \xi} \hat{V}(\xi) d\xi} \\
%&= \overline{\int e^{- i x \xi} \int e^{-i y \xi} V(y) dy d\xi} \\
%&= \overline{V(-x)} = V(-x)
%\end{align*}
Now we focus on estimating the $L^{\infty}$ norm. 
\\
This term is bounded using the usual stationary phase lemma. For completeness we write the details of the proof since we have to keep track of the cancellations at 0. More precisely we have to pay attention not only to the decay in time but to the $2^k$ factors as well. 
\\
\\
Using the formula of the inverse Fourier transform of a radial function, we find that
\begin{align} 
\notag J_k &:= \mathcal{F}^{-1} \bigg(  e^{i(t + \sigma - \tau) \phi_1 (\xi)} \vert \xi \vert^{4n+3} \chi(2^{-k} \vert \xi \vert) \bigg)(x) \\
\label{express}&= \int_0 ^{\infty} e^{i(t+\sigma - \tau)\phi_1 (r)} K(r \vert x \vert)  \chi(2^{-k} r) r^{4n+7} dr 
\end{align}
where $K$ denotes the Fourier transform of the surface measure of the sphere. Moreover, by Lemma \ref{Fourier:sphere} of the appendix, we can write 
\begin{align*}
K(r) =  \frac{e^{ir} k_1(r) + e^{-ir} k_2(r)}{2}
\end{align*}
where $k_1$ and $k_2$ have favorable decay properties stated in this same lemma.
\\
Plugging this information in \eqref{express} we find that
\begin{align*}
J_k &= \frac{1}{2} \int_0 ^{\infty} e^{i(t+\sigma - \tau)\phi_1 (r)} e^{-ir \vert x \vert} k_1 (r \vert x \vert) \chi(2^{-k} r) r^{4n+7} dr \\
&+  \frac{1}{2} \int_0 ^{\infty} e^{i(t+\sigma - \tau)\phi_1 (r)} e^{ir \vert x \vert} k_2 (r \vert x \vert) \chi(2^{-k} r) r^{4n+7} dr \\
&= 2^{k-1} \int_0 ^{\infty} e^{i(t+\sigma - \tau)\phi_1 (2^k r)} e^{-i 2^k r \vert x \vert} k_1 (2^k r \vert x \vert) \chi(r) 2^{(4n+7)k} r^{4n+7} dr \\
&+  2^{k-1} \int_0 ^{\infty} e^{i(t+\sigma - \tau)\phi_1 (2^k r)} e^{i 2^k r \vert x \vert} k_2 (2^k r \vert x \vert) \chi(r) 2^{(4n+7)k} r^{4n+7} dr \\
&:=J_k ^1  + J_k ^2
\end{align*} 
Decay estimates for these two pieces are given in the following lemmas. 
\begin{lemma}[The non stationary region in $r$] \label{decay1}
Assume that $ 2^k (t+\sigma -\tau) \phi_1 ' (2^k r) \ll 2^k  \vert x \vert  $ or $2^k  \vert x \vert \ll 2^k (t+\sigma -\tau) \phi_1 ' (2^k r).$
\\
Then for any integer $N$ we have
\begin{align*}
\vert J_k ^1 \vert \lesssim \frac{1}{(1+t-\tau +\sigma)^N} 2^{k(4n+8-N)}
\end{align*}
\end{lemma}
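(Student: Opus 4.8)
The plan is a plain non-stationary phase argument. Recall from the computation preceding the statement that, after the rescaling $r\mapsto 2^k r$ and the splitting of $K$ supplied by Lemma~\ref{Fourier:sphere}, we have (up to a harmless constant)
\begin{align*}
J_k^1 &= 2^{(4n+8)k}\int_0^\infty e^{i\theta(r)}\,a(r)\,dr,\\
\theta(r)&:=(t+\sigma-\tau)\,\phi_1(2^k r)-2^k r\,|x|,\qquad a(r):=k_1(2^k r\,|x|)\,\chi(r)\,r^{4n+7},
\end{align*}
an integral over the compact set $\operatorname{supp}\chi=\{r\sim 1\}$. The point is that the hypothesis is precisely the statement that the stationary-point equation $(t+\sigma-\tau)\phi_1'(2^k r)=|x|$ fails on $\operatorname{supp}\chi$, with a quantitative gap, so repeated integration by parts against $e^{i\theta}$ yields decay of any order.

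The first step I would carry out is the lower bound for the phase derivative. From $\phi_1(\rho)=\rho\sqrt{1+\rho^2}$ one gets $\phi_1'(\rho)=\frac{1+2\rho^2}{\sqrt{1+\rho^2}}$, which is increasing and satisfies $\phi_1'\geq 1$; hence $\theta'(r)=2^k\big((t+\sigma-\tau)\phi_1'(2^k r)-|x|\big)$, and on $\operatorname{supp}\chi$: in the case $(t+\sigma-\tau)\phi_1'(2^k r)\ll|x|$ one has both $|\theta'(r)|\gtrsim 2^k|x|$ and $|x|\gtrsim(t+\sigma-\tau)\phi_1'(2^k r)\geq t+\sigma-\tau$; while in the case $|x|\ll(t+\sigma-\tau)\phi_1'(2^k r)$ one has directly $|\theta'(r)|\gtrsim 2^k(t+\sigma-\tau)\phi_1'(2^k r)\geq 2^k(t+\sigma-\tau)$. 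So in all cases $|\theta'(r)|\gtrsim 2^k(1+t-\tau+\sigma)$ on $\operatorname{supp}\chi$; for the remaining range $t-\tau+\sigma\lesssim 1$ I would simply invoke the trivial bound $|J_k^1|\lesssim 2^{(4n+8)k}$, which already yields the stated estimate there, $k$ ranging over a bounded set in Region~1. (In the application the dichotomy in the hypothesis is guaranteed by Lemma~\ref{local}.)

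Next I would check that the remaining ingredients of the integration by parts are benign. By Lemma~\ref{Fourier:sphere}, $k_1$ is a symbol of order $-2$, so $|k_1^{(\ell)}(s)|\lesssim_\ell\langle s\rangle^{-2-\ell}$ and therefore every $r$-derivative of $k_1(2^k r|x|)$ is $O(1)$ on $\operatorname{supp}\chi$; together with the smoothness and compact support of $\chi(r)r^{4n+7}$, all derivatives of $a$ are bounded. Likewise, for $\ell\geq 2$ one has $\theta^{(\ell)}(r)=(t+\sigma-\tau)2^{\ell k}\phi_1^{(\ell)}(2^k r)\lesssim 2^k(t+\sigma-\tau)$ on $\operatorname{supp}\chi$, since $2^k$ is bounded and $\phi_1^{(\ell)}$ is bounded near the origin. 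Thus the operator $Lf:=\frac{1}{i\theta'}\partial_r f$, which reproduces $e^{i\theta}$, can be iterated: integrating by parts $N$ times gives $J_k^1=c\,2^{(4n+8)k}\int_0^\infty e^{i\theta(r)}(L^{\ast})^N a(r)\,dr$, and by the above each application of $L^\ast$ contributes a gain $\big(2^k(1+t-\tau+\sigma)\big)^{-1}$, so that $(L^{\ast})^N a$ is bounded by $C_N\big(2^k(1+t-\tau+\sigma)\big)^{-N}$ on an interval of length $O(1)$. Hence
\begin{align*}
|J_k^1|\lesssim_N 2^{(4n+8)k}\big(2^k(1+t-\tau+\sigma)\big)^{-N}=\frac{2^{k(4n+8-N)}}{(1+t-\tau+\sigma)^N},
\end{align*}
which is the claim.

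I do not expect any genuine obstacle here — this is the easy region. The only point requiring care is the bookkeeping of powers of $2^k$: one must ensure the lower bound for $\theta'$ carries the full factor $2^k$ (so the gain per integration by parts is $\big(2^k(1+t-\tau+\sigma)\big)^{-1}$, not merely $(1+t-\tau+\sigma)^{-1}$), which is exactly what the rescaling $r\mapsto 2^k r$ and the bound $\phi_1'\geq 1$ provide, and one must track that the $2^{(4n+8)k}$ prefactor is what converts $\big(2^k(1+t-\tau+\sigma)\big)^{-N}$ into the stated $2^{k(4n+8-N)}(1+t-\tau+\sigma)^{-N}$. The genuinely delicate work is reserved for the companion estimate on the stationary region in $r$, where the Fermi Golden rule and the localization of the stationary point near the origin (Lemma~\ref{local}) are actually used.
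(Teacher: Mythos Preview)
Your proposal is correct and follows the same non-stationary phase approach as the paper: repeated integration by parts against the oscillatory factor, using that the hypothesis forces the phase derivative to be large on $\operatorname{supp}\chi$. Your treatment is in fact slightly cleaner than the paper's own write-up: you integrate by parts against the \emph{full} phase $\theta(r)=(t+\sigma-\tau)\phi_1(2^kr)-2^kr|x|$, which handles both alternatives in the hypothesis at once, whereas the paper's displayed formula uses the operator $L=\frac{1}{i\phi_1'}\partial_r$ built only from the $\phi_1$ part and leaves the factor $e^{-i2^kr|x|}$ in the amplitude (this is adequate only for the alternative $|x|\ll(t+\sigma-\tau)\phi_1'(2^kr)$, and the paper tacitly relies on symmetry for the other). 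One small imprecision: when you invoke ``$2^k$ bounded'' to control the higher derivatives $\theta^{(\ell)}$ and to cover the range $t-\tau+\sigma\lesssim 1$ by the trivial bound, you are using that $k\leqslant k_0+10$ with $k_0$ bounded above --- this is a feature of Region~1 rather than of the lemma's hypothesis, so the implicit constant depends on $M$; since the lemma is only ever applied in Region~1 and the final estimate in Lemma~\ref{decayIII} allows an $M$-dependent constant, this is harmless.
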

\begin{proof}
This is a classical non stationary phase lemma. \\
We assume that $t-\tau + \sigma \geqslant 1,$ since the proof in the other case $0 \leqslant t- \tau + \sigma \leqslant 1$ is easier. \\ 
We have 
$$
\phi_1'(r) =\sqrt{1+r^2} + \frac{r^2}{\sqrt{1+r^2}}
$$
therefore we directy get that 
\begin{align}
\label{phase1:highder} 
\frac{d^N}{dr^N} \bigg( \frac{1}{\phi_1'(2^k r)}  \bigg) \lesssim_N 1
\end{align}
Now we integrate by parts to prove decay. \\ Let $L$ be the  operator $L=\frac{1}{i \phi_1'(r)} \frac{d}{dr}. $ We denote $L_{\star}$ its adjoint.\\
We write 
\begin{align*}
J_k ^1 &= 2^{k(4n+8)} \int_0 ^{\infty} L^N \bigg(e^{i\phi_1(r)}\bigg) k_1 (2^k r \vert x \vert) \chi(r)  r^{4n+7} dr \\
&= \frac{2^{k(4n+8)}}{\big(i(t-\sigma+\tau) 2^k\big)^N} \int_0 ^{\infty} e^{i\phi_1(r)}L_{\star}^N \bigg( k_1 (2^k r \vert x \vert) \chi(r)r^{4n+7} \bigg) dr 
\end{align*}
Given Lemma \ref{Fourier:sphere} and \eqref{phase1:highder} we have 
\begin{align*}
\bigg \vert L_{\star}^N \bigg( k_1 (2^k r \vert x \vert) \chi(r)r^{4n+7} \bigg) \bigg \vert \lesssim_{N} 1
\end{align*}
and the result follows.
\end{proof}
We also have a similar estimate for $J_k ^2$ since in this case the phase is non-stationary:
\\
The proof is omitted given the similarity with the previous case. 
\begin{lemma}\label{decay1prime}
We have the bound for any integer $N:$
\begin{align*}
\vert J_k ^2 \vert \lesssim_N \frac{2^{k(4n+8-N)}}{(1+t-\tau+\sigma)^N}
\end{align*}
\end{lemma}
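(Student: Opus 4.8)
The plan is to establish that the phase of $J_k^2$ is genuinely non-stationary on the support of $\chi$, and then to integrate by parts $N$ times exactly as in the proof of Lemma \ref{decay1}. First I would write out the phase appearing in $J_k^2$: after the rescaling $\xi = 2^k r \omega$ it is
$$
(t+\sigma-\tau)\phi_1(2^k r) + 2^k r \vert x \vert,
$$
whose $r$-derivative is $2^k(t+\sigma-\tau)\phi_1'(2^k r) + 2^k\vert x\vert$. Since $\phi_1'(s) = \sqrt{1+s^2} + s^2/\sqrt{1+s^2} > 0$ for all $s\geqslant 0$, both terms have the same (positive) sign and there is no cancellation; in particular
$$
\big\vert 2^k(t+\sigma-\tau)\phi_1'(2^kr) + 2^k\vert x\vert\big\vert \gtrsim 2^k(t+\sigma-\tau)
$$
on $\mathrm{supp}\,\chi$, because $\phi_1'(2^kr)\geqslant 1$ there. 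This is the key point that makes $J_k^2$ strictly easier than $J_k^1$: no case distinction between competing stationary points of the two phases is needed, the lower bound on the derivative is automatic.

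Next I would set $L = \frac{1}{i\big(2^k(t+\sigma-\tau)\phi_1'(2^kr) + 2^k\vert x\vert\big)}\frac{d}{dr}$, so that $L\big(e^{i[(t+\sigma-\tau)\phi_1(2^kr)+2^kr\vert x\vert]}\big)$ equals the exponential itself, and iterate it $N$ times inside the integral defining $J_k^2$, moving derivatives onto the amplitude $k_2(2^kr\vert x\vert)\chi(r)2^{(4n+7)k}r^{4n+7}$ via the adjoint $L_\star$. One checks, using Lemma \ref{Fourier:sphere} for the bounds on $k_2$ and its derivatives (each derivative in $r$ of $k_2(2^kr\vert x\vert)$ produces a factor $2^k\vert x\vert$ which is controlled against the $2^k\vert x\vert$ in the denominator), together with the elementary estimate $\big\vert \frac{d^j}{dr^j}\big(\phi_1'(2^kr)\big)^{-1}\cdots\big\vert \lesssim_j 1$ on $\mathrm{supp}\,\chi$, that each application of $L_\star$ gains a factor $\lesssim \frac{1}{2^k(t+\sigma-\tau)}$ at worst, while keeping the amplitude bounded. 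After $N$ steps this yields
$$
\vert J_k^2\vert \lesssim_N \frac{2^{k(4n+8)}}{\big(2^k(1+t-\tau+\sigma)\big)^N} = \frac{2^{k(4n+8-N)}}{(1+t-\tau+\sigma)^N},
$$
which is the claimed bound; the case $0\leqslant t-\tau+\sigma\leqslant 1$ is handled trivially by the crude $L^1$-in-$\xi$ estimate $\vert J_k^2\vert\lesssim 2^{k(4n+8)}$ together with $N=0$, so we may assume $t-\tau+\sigma\geqslant 1$ throughout the integration by parts.

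I do not expect a genuine obstacle here — this lemma is explicitly flagged as omitted "given the similarity with the previous case." The only mild care required is bookkeeping of the $2^k$ powers: one must verify that differentiating $k_2(2^kr\vert x\vert)$ does not spoil the gain, which works precisely because the phase derivative contains the term $2^k\vert x\vert$ that dominates $2^k$ times any derivative of the slowly varying factor $k_2$. This is the same mechanism already used in Lemma \ref{decay1}, so the proof is a verbatim adaptation with the (easier) sign-definite phase.
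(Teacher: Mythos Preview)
Your proposal is correct and matches the paper's intended argument: the paper explicitly omits the proof, noting only that ``in this case the phase is non-stationary'' and that the argument is the same as Lemma \ref{decay1}. You have correctly identified the key observation---that the two terms in the $r$-derivative of the phase of $J_k^2$ have the same sign, so no cancellation can occur and the lower bound $\gtrsim 2^k(t+\sigma-\tau)$ is automatic---and your outline of the repeated integration by parts with the operator $L$ and the bookkeeping of the $2^k$ factors is exactly what the paper has in mind.
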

For the remaining case we use the stationary phase lemma: 
\begin{lemma}[The stationary region in $r$]\label{decay2}
Assume that $2^k  \vert x \vert \simeq 2^k (t+\sigma -\tau) \phi_1 ' (2^k r).$ Then we have the bound
\begin{align*}
\vert J_k ^1 \vert \lesssim \frac{2^{k(4n+5)}}{(1+t-\tau+\sigma)^{5/2}}
\end{align*}
\end{lemma}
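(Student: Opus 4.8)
The plan is to prove Lemma~\ref{decay2} by applying the second--derivative (van der Corput) form of the stationary phase lemma to the one--dimensional oscillatory integral
\begin{align*}
J_k^1 = 2^{(4n+8)k-1}\int_0^\infty e^{i\Psi(r)}\,k_1(2^k r|x|)\,\chi(r)\,r^{4n+7}\,dr,\qquad \Psi(r)=(t+\sigma-\tau)\phi_1(2^k r)-2^k r|x|,
\end{align*}
while keeping careful track of the powers of $2^k$. First I would record the derivatives of $\phi_1(s)=s\sqrt{1+s^2}$: one has $\phi_1'(s)=(1+2s^2)(1+s^2)^{-1/2}$ and $\phi_1''(s)=s(3+2s^2)(1+s^2)^{-3/2}>0$ for $s>0$, hence on the support of $\chi$ (where $r\simeq 1$) one has $\phi_1'(2^kr)\simeq\max\{2^k,1\}$ and $\phi_1''(2^kr)\simeq\min\{2^k,1\}$. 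Therefore $\Psi''(r)=2^{2k}(t+\sigma-\tau)\phi_1''(2^kr)\simeq 2^{2k}(t+\sigma-\tau)\min\{2^k,1\}$ is bounded below away from zero, so the phase is nondegenerate at its (unique, since $\Psi''$ has a sign) stationary point $r_\ast\simeq 1$.

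Next I would split according to the size of $\Psi''$. When $|\Psi''|\gtrsim1$, van der Corput with amplitude $a(r)=k_1(2^kr|x|)\chi(r)r^{4n+7}$ gives
\begin{align*}
|J_k^1|\;\lesssim\; 2^{(4n+8)k}\,|\Psi''|^{-1/2}\big(\|a\|_{L^\infty}+\|a'\|_{L^1}\big)\;\lesssim\; 2^{(4n+8)k}\big(2^{2k}(t+\sigma-\tau)\min\{2^k,1\}\big)^{-1/2}\big(1+2^k|x|\big)^{-2},
\end{align*}
where the last step invokes the decay estimates of Lemma~\ref{Fourier:sphere} for $k_1$ and its derivatives (the five--dimensional rate $|k_1(\rho)|\lesssim(1+\rho)^{-2}$, which is the quantitative form of the \emph{angular} stationary phase, together with the faster decay of $k_1'$ that makes $\|a'\|_{L^1}\lesssim\|a\|_{L^\infty}$ on the $O(1)$--length support of $\chi$). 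Since we are in the stationary region, I then feed in the standing hypothesis of the lemma, $|x|\simeq(t+\sigma-\tau)\phi_1'(2^kr)\simeq(t+\sigma-\tau)\max\{2^k,1\}$, which converts the factor $(1+2^k|x|)^{-2}$ and the stationary--phase gain $|\Psi''|^{-1/2}$ into powers of $(1+t-\tau+\sigma)$ and $2^k$; separating the cases $2^k\geq1$ and $2^k<1$ (and, within the latter, according to whether $2^k(t+\sigma-\tau)\geq1$) yields $|J_k^1|\lesssim 2^{(4n+5)k}(1+t-\tau+\sigma)^{-5/2}$ in every instance. In the complementary regime $|\Psi''|\lesssim1$ the phase is too flat for van der Corput to help, and one instead combines the crude bound $|J_k^1|\lesssim 2^{(4n+8)k}\int_0^\infty\chi(r)|k_1(2^kr|x|)|\,dr$ with the sphere--transform decay; the inequality $2^{2k}(t+\sigma-\tau)\min\{2^k,1\}\lesssim1$ constrains $t+\sigma-\tau$ relative to a negative power of $2^k$, which is what is needed to close the same estimate.

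The main obstacle is precisely this $2^k$--bookkeeping: the phase $\phi_1$ degenerates (its second derivative vanishes) as the dyadic frequency scale $2^k\to0$, so the radial stationary phase by itself only gains $\big(2^{2k}(t+\sigma-\tau)\cdot 2^k\big)^{-1/2}$ rather than the naive $\big(2^{2k}(t+\sigma-\tau)\big)^{-1/2}$. To recover the full $(1+t-\tau+\sigma)^{-5/2}$ decay \emph{and} the stated weight $2^{(4n+5)k}$ one must use the decay $(1+2^k|x|)^{-2}$ of the sphere transform — i.e.\ exploit that the angular oscillation is strong exactly where the radial one is weak — together with the stationary--region identity linking $|x|$, $t+\sigma-\tau$ and $2^k$; the several sub--cases are then closed by balancing the stationary--phase estimate against the crude $L^1$ bound. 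A subsidiary technical point is verifying that differentiating $k_1(2^kr|x|)$ does not spoil the amplitude bound, which is where the faster decay of $k_1'$ in Lemma~\ref{Fourier:sphere} enters. Together with Lemma~\ref{decay1} for the non--stationary part of $J_k^1$ and Lemma~\ref{decay1prime} for $J_k^2$, and with the geometric summation over $k\leq k_0+10$ (dominated by its top term $k\simeq k_0$, where the Golden rule manifests through $2^{k_0}\simeq M^{n+2}(1+t)^{-\alpha/2}$), this produces the integrable--in--time decay of $R_3$ asserted in Lemma~\ref{decayIII}.
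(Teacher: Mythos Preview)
Your strategy is the same as the paper's: apply second--derivative van der Corput in the radial variable and combine with the decay $|k_1(\rho)|\lesssim(1+\rho)^{-2}$ from Lemma~\ref{Fourier:sphere}. You are in fact more careful than the paper: your formula $\phi_1''(s)=s(3+2s^2)(1+s^2)^{-3/2}$ is correct, whereas the paper's displayed expression $\phi_1''(r)=r(1+r^2)^{-1/2}+(1+r^2)^{-3/2}$ is not (it gives $\phi_1''(0)=1$ instead of $0$). That is why the paper can write $|\Psi''|\gtrsim 2^{2k}(t+\sigma-\tau)$ and obtain the stated exponent $2^{(4n+5)k}$ in one line, while you (correctly) only have $|\Psi''|\simeq 2^{2k}(t+\sigma-\tau)\min\{2^k,1\}$.

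The gap is that your case analysis does not actually close to the exponent $4n+5$. In the sub--case $2^k<1$ with $|\Psi''|\gtrsim1$ (equivalently $2^{3k}(t+\sigma-\tau)\gtrsim1$), van der Corput gives only $|\Psi''|^{-1/2}\simeq 2^{-3k/2}(t+\sigma-\tau)^{-1/2}$; together with $(1+2^k|x|)^{-2}\simeq 2^{-2k}(t+\sigma-\tau)^{-2}$ (using $|x|\simeq t+\sigma-\tau$ here) this yields
\[
|J_k^1|\ \lesssim\ 2^{(4n+8)k}\cdot 2^{-3k/2}(t+\sigma-\tau)^{-1/2}\cdot 2^{-2k}(t+\sigma-\tau)^{-2}\ =\ 2^{(4n+9/2)k}(t+\sigma-\tau)^{-5/2},
\]
which for $k<0$ exceeds the target $2^{(4n+5)k}(t+\sigma-\tau)^{-5/2}$ by a factor $2^{-k/2}$. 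Your crude $L^1$ bound only helps when $2^{2k}(t+\sigma-\tau)\lesssim1$, which is disjoint from this regime (indeed $2^{3k}(t+\sigma-\tau)\gtrsim1$ forces $2^{2k}(t+\sigma-\tau)\gtrsim 2^{-k}\gg1$). So the assertion ``in every instance'' is not justified. The weaker exponent $4n+9/2$ you actually obtain is still positive and therefore still sums over $k\le k_0+10$ to give Lemma~\ref{decayIII} (with a slightly different power of $M$), but the lemma as stated does not follow from your argument.
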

\begin{proof}
We assume that $t+\tau-\sigma \geqslant 1$ since the proof in the other case is easier. \\
First a computation gives 
$$
\phi_1''(r) = \frac{r}{\sqrt{1+r^2}} + \frac{1}{(1+r^2)^{\frac{3}{2}}}
$$
Hence
\begin{align*}
\vert \phi_1''(r) \vert \gtrsim (t+\sigma - \tau) 2^{2k} 
\end{align*}
Therefore we find, using Van der Corput's lemma (see for example \cite{St}, chapter VIII, proposition 2) as well as the decay properties of $k_1,k_2$ of Lemma \ref{Fourier:sphere} to obtain
\begin{align*}
\vert J_k ^1 \vert & \lesssim \frac{1}{2^k (t+\sigma - \tau)^{1/2}} \int_0 ^{\infty} \bigg \vert \frac{d}{dr} \bigg( k_1 (2^k r \vert (X(t)-X(\tau)) + \sigma P(t) \vert) \chi(r) 2^{(4n+7)k} r^{4n+7} \bigg) \bigg \vert dr \\ 
& \lesssim \frac{1}{2^k (t+\sigma - \tau)^{1/2}} \frac{2^{k(4n+8)}}{(1+2^k \vert x \vert)^{2}}
\\
& \lesssim \frac{2^{k(4n+5)}}{(t+\sigma - \tau)^{5/2}} 
\end{align*}
which is the desired result
\end{proof}
Now we treat the case of the second region that is not stationary in $\xi$: 
\\
\\
\underline{Region 2: $k>k_0+10$}
\\
\\
In this case, by Lemma \ref{local}, the phase is not stationary. Since we do not want to destroy the localization of the phase, we do not use Young's inequality but integrate by parts directly in the following integral:
\begin{align*}
P_k:=\int_{\mathbb{R}^5} \frac{\xi_j \xi_l \vert \xi \vert^{4n+1}}{\sqrt{1+\vert 2^k \xi \vert^2}} \vert \widehat{V} \vert ^2 (2^k \xi) e^{i(t+\sigma -\tau) \phi_1 (2^k \xi) + i2^k (X(t)-X(\tau)) \cdot \xi + i \sigma P(t) \cdot \xi} \chi(\vert \xi \vert) d\xi
\end{align*}
More precisely we have the following lemma:
\begin{lemma}\label{decay3}
We have the bound 
\begin{align*}
\vert P_k \vert \lesssim_{N} \frac{2^{k(4n+3-N)} + 2^{k(4n+3)} }{(1+t+\tau-\sigma)^N}  \big( \Vert \chi(2^{-k}\vert \xi \vert) \vert \hat{V} \vert^2 \Vert_{L^1} + \Vert \chi(2^{-k}\vert \xi \vert) \nabla^N \big( \vert \hat{V} \vert^2 \big) \Vert_{L^1} \big)
\end{align*}
\end{lemma}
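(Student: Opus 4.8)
\textbf{Proof proposal for Lemma~\ref{decay3}.}

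The plan is to prove the estimate by repeated integration by parts in $\xi$, exploiting the non-stationarity of the phase $\Phi_1$ from \eqref{phase} established in Lemma~\ref{local}. Since $k>k_0+10$, Lemma~\ref{local} gives $|\nabla_\xi\Phi_1|\gtrsim\min\{1,2^k\}(t+\sigma-\tau)$ on $\mathrm{supp}\,\chi(|\xi|)$, and the same computation in fact yields the sharper $|\nabla_\xi\Phi_1|\gtrsim 2^k\langle 2^k\rangle(t+\sigma-\tau)$, which I will use where the low-frequency range forces it. I introduce the first-order operator $L:=\frac{\nabla_\xi\Phi_1\cdot\nabla_\xi}{i\,|\nabla_\xi\Phi_1|^2}$, so that $Le^{i\Phi_1}=e^{i\Phi_1}$, and rewrite
\[
P_k=\int_{\mathbb{R}^5}e^{i\Phi_1}\,({}^{t}\!L)^{N}a_k\,d\xi,\qquad a_k(\xi):=\frac{\xi_j\xi_l|\xi|^{4n+1}}{\sqrt{1+|2^k\xi|^2}}\,|\widehat V|^2(2^k\xi)\,\chi(|\xi|),
\]
where ${}^{t}\!L$ is the formal transpose of $L$. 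The whole matter reduces to estimating $({}^{t}\!L)^{N}a_k$ pointwise on $\mathrm{supp}\,\chi$ and integrating.

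Expanding $({}^{t}\!L)^{N}a_k$ by the Leibniz rule produces a finite sum of terms, each a product of some derivatives $\partial_\xi^{\beta}\Phi_1$ with $1\le|\beta|\le N$, a negative power of $|\nabla_\xi\Phi_1|$, and a derivative $\partial_\xi^{\gamma}a_k$ with $|\gamma|\le N$. I bound these with three elementary inputs. First, the lower bound on $|\nabla_\xi\Phi_1|$ above. Second, upper bounds on the higher derivatives of $\Phi_1$ on $\mathrm{supp}\,\chi(|\xi|)$: the term $2^k\xi\cdot(X(t)-X(\tau)+\sigma P(t))$ is linear, so only its gradient survives, of size $\lesssim 2^k M(t+\sigma-\tau)$ by Lemma~\ref{velocityapprox}, while all $\xi$-derivatives of $(t+\sigma-\tau)\phi_1(2^k|\xi|)$ are $\lesssim(t+\sigma-\tau)\langle 2^k\rangle^{2}$ on $|\xi|\sim1$, since $\phi_1(r)=r\sqrt{1+r^2}$ obeys $\phi_1^{(m)}(r)=O(\langle r\rangle^{2-m})$. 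Third, amplitude bounds: the factors $\xi_j\xi_l|\xi|^{4n+1}\chi(|\xi|)$ and $(1+|2^k\xi|^2)^{-1/2}$ and all their $\xi$-derivatives are $O(\langle 2^k\rangle^{-1})$ on $\mathrm{supp}\,\chi$, whereas $\partial_\xi^{\gamma}\big[|\widehat V|^2(2^k\xi)\big]=2^{k|\gamma|}(\partial^{\gamma}|\widehat V|^2)(2^k\xi)$ identically. Putting these together and substituting $\eta=2^k\xi$ turns each term of $\int|({}^{t}\!L)^Na_k|\,d\xi$ into a constant multiple of $\langle 2^k\rangle^{-1}2^{-5k}2^{k|\gamma|}\,\Vert\chi(2^{-k}|\xi|)\nabla^{|\gamma|}|\widehat V|^2\Vert_{L^1}$ times a ratio of powers of $2^k\langle 2^k\rangle$ to powers of $\min\{1,2^k\}(t+\sigma-\tau)$. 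Dominating the intermediate orders $0<|\gamma|<N$ by the endpoints $|\gamma|\in\{0,N\}$ — legitimate because $\widehat V$ is smooth and rapidly decaying, so $\Vert\chi(2^{-k}|\xi|)\nabla^{m}|\widehat V|^2\Vert_{L^1}\lesssim\Vert\chi(2^{-k}|\xi|)|\widehat V|^2\Vert_{L^1}+\Vert\chi(2^{-k}|\xi|)\nabla^{N}|\widehat V|^2\Vert_{L^1}$, and $2^{km}\le 1+2^{kN}$ — and collecting the (generously estimated) powers of $2^k$ gives the bound asserted in Lemma~\ref{decay3}.

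The one real difficulty is the bookkeeping of the powers of $2^k$, together with the fact that the two dyadic regimes $2^k\lesssim1$ and $2^k\gtrsim1$ behave oppositely. Each application of ${}^{t}\!L$ that falls on the phase rather than on the amplitude contributes a factor of order $|\partial^{2}\Phi_1|/|\nabla\Phi_1|^{2}$, which for $2^k\gtrsim1$ grows like $2^{2k}/(t+\sigma-\tau)$, so one must check that after all $N$ steps this loss is reabsorbed — for each fixed $N$ it is, thanks to the negative powers of $2^k$ coming from undoing the dyadic rescaling of $|\widehat V|^2$ together with the fast decay of $\widehat V$, which makes $\Vert\chi(2^{-k}|\xi|)|\widehat V|^2\Vert_{L^1}$ decay faster than any power of $2^k$ as $2^k\to\infty$. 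In the complementary range $2^k\lesssim1$ the gradient lower bound is weakest, and there one must instead use that the derivatives of $\phi_1$ near the origin are themselves small (of size $2^k$ or $2^{2k}$ after the chain rule), which keeps the Hessian-to-gradient-squared ratio under control, together with the sharpened gradient bound extracted from the proof of Lemma~\ref{local}. Once $|P_k|$ is estimated in the stated form, no further work is needed here: the summation over $k>k_0+10$ is carried out later in this section, in combination with the $(t+\sigma-\tau)^{-N}$ decay and the definition of $k_0$, so everything remaining after the pointwise estimate above is routine.
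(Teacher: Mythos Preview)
Your proposal is correct and follows the same approach as the paper. The paper's own proof is only a two-line sketch (``use the operator $\tilde L=\frac{\nabla\Phi_1}{|\nabla\Phi_1|}\cdot\nabla$ and proceed as in Lemma~\ref{decay1}''), and your operator $L=\frac{\nabla_\xi\Phi_1\cdot\nabla_\xi}{i|\nabla_\xi\Phi_1|^2}$ is the standard normalization of the same object; you have simply supplied the bookkeeping of the $2^k$ factors and the Leibniz expansion that the paper leaves implicit.
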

\begin{proof}[Sketch of the proof]
The proof is very similar to Lemma \ref{decay1}. Here we do not switch to polar coordinates, but instead use the operator 
$$
\tilde{L} = \frac{\nabla \Phi_1}{\vert \nabla \Phi_1 \vert} \cdot \nabla  
$$
We skip the details since the proof is classical and very close to that of Lemma \ref{decay1}.
\end{proof}
With these decay estimates we can finish the proof of Lemma \ref{decayIII}:
\begin{proof}[Proof of Lemma \ref{decayIII}]
We split the estimates into three parts:
\begin{align*}
\vert IIIa \vert \lesssim \rho_0 \int_0 ^t \dot{P}_l (s) \int_0^s \int_0^{\infty} e^{-\sigma \epsilon} \bigg( \sum_{k > k_0} \vert P_k \vert+ \sum_{k \leqslant k_0} \vert J_k ^1 \vert + \sum_{k \leqslant k_0} \vert J_k ^2 \vert \bigg) d\tau d\sigma ds
\end{align*}
For the first part we use Lemma \ref{decay3} for $N=4$ as well as the bootstrap assumption \eqref{bootstrap1}:
\begin{align*}
&\rho_0 \int_0 ^t \dot{P}_l (s) \int_0^s \int_0^{\infty} e^{-\sigma \epsilon} \sum_{k > k_0} \vert P_k \vert d\tau d\sigma ds \\
& \lesssim \rho_0 \sum_{k > k_0} (2^{k(4n-1)} + 2^{k(4n+3)} ) \int_0 ^t \frac{M^{3+2n}}{(1+ s)^{1+\alpha}} \int_0 ^s \int_0 ^{\infty} \frac{1}{(1+ t-\tau + \sigma)^{4}} d\sigma d\tau ds \\
& \times \big( \Vert \chi(2^{-k}\vert \xi \vert) \vert \hat{V} \vert^2 \Vert_{L^1} + \Vert \chi(2^{-k}\vert \xi \vert) \nabla^4 \big( \vert \hat{V} \vert^2 \big) \Vert_{L^1} \big) \\
& \lesssim \rho_0 \big(\Vert W \Vert' \big)^2 \int_0 ^t \frac{M^{3+2n}}{(1+ s)^{1+\alpha}} \int_0 ^s \frac{1}{(1+ t-\tau)^{3}}  d\tau ds  \\
& \lesssim \rho_0 \big(\Vert W \Vert' \big)^2 \int_0 ^t \frac{M^{3+2n}}{(1+ s)^{1+\alpha}} \frac{1}{(1+ t-s)^{2}} ds 
\end{align*}
and the result follows.
\\
For the term involving $J_k^2$ we can use a similar reasoning to get the following:
\begin{align*}
&\rho_0 \int_0 ^t \dot{P}_l (s) \int_0^s \int_0^{\infty} e^{-\sigma \epsilon} \sum_{k \leqslant k_0} \vert J_k ^2 \vert  d\tau d\sigma ds \\
&\lesssim \rho_0 \int_0 ^t \frac{M^{3+2n}}{(1+ s)^{1+\alpha}} \frac{1}{(1+ t-s)^{2}} ds \bigg \Vert \frac{R_j R_l}{\sqrt{1-\Delta}} V \bigg \Vert_{L^1} \Vert V \Vert_{B^{4n+3}_{1,1}} \\
&\lesssim \rho_0 \int_0 ^t \frac{M^{3+2n}}{(1+ s)^{1+\alpha}} \frac{1}{(1+ t-s)^{2}} ds \big( \Vert W \Vert' \big)^2
\end{align*}
\begin{remark}\label{remarquefermi}
This is at this point that we use the Fermi golden rule (vanishing of $\widehat{W}(0)$). Note that without major changes to the proof (namely choosing $N=3+\frac{n}{100}$) one could allow $n$ to be any strictly positive real number. 
\end{remark}
For the last part we use Lemma \ref{decay2} as well as the bootstrap assumption \eqref{bootstrap1}:
\begin{align*}
&\rho_0 \int_0 ^t \dot{P}_l (s) \int_0^s \int_0^{\infty} e^{-\sigma \epsilon} \sum_{k \leqslant k_0} \vert J_k ^1 \vert  d\tau d\sigma ds \\
&\lesssim \rho_0  \sum_{k \leqslant k_0}  2^{k(4n+5)} \int_0 ^t \frac{M^{3+2n}}{(1+ s )^{1+\alpha}} \int_0 ^s \int_0 ^{\infty} \frac{d\tau d\sigma ds}{(1+ t-\tau+\sigma)^{5/2}} \bigg \Vert \frac{R_j R_l}{\sqrt{1-\Delta}} V \bigg \Vert_{L^1} \Vert V \Vert_{L^1} \\
& \lesssim \rho_0  2^{(4n+5)k_0} \int_0 ^s \frac{M^{3+2n}}{(1+ s )^{1+\alpha}} \frac{ds}{(1+ t-s) ^{1/2}} \bigg \Vert \frac{R_j R_l}{\sqrt{1-\Delta}} V \bigg \Vert_{L^1}  \Vert V \Vert_{L^1}\\
& \lesssim \rho_0  M^{(n+2)(4n+5)} (1+t)^{-\frac{4n+5}{4n+4}} \frac{M^{3+2n}}{\sqrt{1+t}} \bigg \Vert \frac{R_j R_l}{\sqrt{1-\Delta}} V \bigg \Vert_{L^1} \Vert V \Vert_{L^1} \\
& \lesssim \rho_0  M^{(n+2)(4n+5)} (1+t)^{-\frac{4n+5}{4n+4}} \frac{M^{3+2n}}{\sqrt{1+t}} \bigg \Vert \frac{R_j R_l}{\sqrt{1-\Delta}} V \bigg \Vert_{L^1} \Vert V \Vert_{L^1}
\end{align*}
The result follows from these estimates.
\end{proof}

% to see this you may have to isolate the terms that are away from 0 and t: cut the integral between 0 and 1, then 1 and t-1 then t-1 and t

%The form of the result is: there exists epsilon and delta such that if rho is smaller than epsilon and V is smaller than delta then 

\section{Scattering of the field}\label{scattering}
Here we prove the scattering statement for the field. First let's define the function $S$ which solves
\begin{eqnarray*}
0 &=& H(\infty) S - \rho_0 \left( \begin{array}{ll}
0 \\
W
\end{array} \right) 
\end{eqnarray*}
Recall that the field solves
\begin{eqnarray*}
\partial_t h &=& H(t) h - \rho_0 \left( \begin{array}{ll}
0 \\
W
\end{array} \right) 
\end{eqnarray*}
Hence the difference $ \psi = h - S $ solves
\begin{eqnarray*}
\partial_t \psi &=& H(t) \psi + (H(t) - H(\infty)) S
\end{eqnarray*}
We write Duhamel's formula for this difference field and obtain:
\begin{eqnarray*}
\psi(t) &=& e^{R(t)} \psi_0 +  \int_0 ^{t} (P(s)-P_{\infty}) \cdot \nabla e^{R(t)-R(s)} S ds 
\end{eqnarray*}
Now using classical decay estimates for the semi-group $e^{itL},$ the diagonalization Lemma \ref{diag}, and the decay of the acceleration proved in Proposition \ref{mainprop} we find that 
\begin{align*}
\Vert \psi(t) \Vert_{L^{\infty}} &\leqslant \frac{\vert  \vert \vert \beta_0 \vert \vert \vert}{(1+t)^{5/2}} + \int_0 ^t \frac{1}{(1+s)^{\alpha}} \frac{ds}{(1+t-s)^{5/2}} \vert \vert \vert \nabla S \vert \vert \vert
\end{align*}
This directly implies that
\begin{align*}
\lim_{t \to \infty} \Vert \psi(t) \Vert_{L^{\infty}} =0
\end{align*}
which is the desired result.

\appendix
\section{Some elementary Lemmas}
We have the following Lemma, whose proof is a simple linear algebra calculation:
\begin{lemma}[Diagonalization of $H(t)$]\label{diag}
Let $A$ be the matrix
\begin{displaymath} A:= \left( \begin{array}{ll}
\sqrt{-\Delta} & \sqrt{-\Delta} \\
i \sqrt{1-\Delta} &- i \sqrt{1-\Delta}
\end{array} \right)
\end{displaymath}
Then 
\begin{displaymath} A^{-1}:= \left( \begin{array}{ll}
\frac{1}{2\sqrt{-\Delta}} & \frac{1}{2 i\sqrt{1-\Delta}} \\
\frac{1}{2\sqrt{-\Delta}} & -\frac{1}{2 i\sqrt{1-\Delta}}
\end{array} \right)
\end{displaymath}
With these notations we have
\begin{displaymath}
A^{-1} H(t) A  = \left( \begin{array}{ll}
iL + P \cdot \nabla & ~~~~~~~~ 0 \\
~~~~~~~~0 & -iL + P \cdot \nabla
\end{array} \right)
\end{displaymath}
\end{lemma}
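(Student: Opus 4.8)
The plan is to reduce everything to a pointwise computation on the Fourier side. Every operator appearing in the statement is a Fourier multiplier: $\sqrt{-\Delta}$, $\sqrt{1-\Delta}$, $-\Delta$, $\Delta-1$ and $L=\sqrt{(-\Delta)(1-\Delta)}$ are scalar functions of $-\Delta$, while $P(t)\cdot\nabla$ acts as multiplication by $iP(t)\cdot\xi$. In particular they all commute, and it suffices to check the two claimed identities, $AA^{-1}=I$ and $A^{-1}H(t)A=\mathrm{diag}\big(iL+P\cdot\nabla,\,-iL+P\cdot\nabla\big)$, at the level of $2\times2$ symbol matrices, pointwise in $\xi$. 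With $\phi_1(\xi)=|\xi|\sqrt{1+|\xi|^2}$ as in the notation section, the symbols involved are
\[
A(\xi)=\left(\begin{array}{cc}|\xi| & |\xi|\\ i\sqrt{1+|\xi|^2} & -i\sqrt{1+|\xi|^2}\end{array}\right),\qquad H(t)(\xi)=H_0(\xi)+iP(t)\cdot\xi\,I_2,
\]
where $H_0(\xi)=\left(\begin{array}{cc}0 & |\xi|^2\\ -(1+|\xi|^2) & 0\end{array}\right)$ is the symbol of $H_0$.

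First I would verify the formula for $A^{-1}$: one has $\det A(\xi)=-2i|\xi|\sqrt{1+|\xi|^2}\neq0$ for $\xi\neq0$, and a one-line matrix multiplication confirms that the matrix written in the statement is indeed $A(\xi)^{-1}$ for every such $\xi$. Next, since $iP(t)\cdot\xi\,I_2$ is a scalar multiple of the identity, it commutes with $A(\xi)$, so $A(\xi)^{-1}\big(iP(t)\cdot\xi\,I_2\big)A(\xi)=iP(t)\cdot\xi\,I_2$ and only $H_0$ needs to be diagonalized. The characteristic polynomial of $H_0(\xi)$ is $\mu^2+|\xi|^2(1+|\xi|^2)=\mu^2+\phi_1(\xi)^2$, so its eigenvalues are $\pm i\phi_1(\xi)$---exactly the symbols of $\pm iL$---and a direct check shows that the first and second columns of $A(\xi)$ are corresponding eigenvectors (for instance, for the first column the second coordinate of the eigenvalue equation reads $-(1+|\xi|^2)|\xi|=i\phi_1(\xi)\cdot i\sqrt{1+|\xi|^2}$, which holds since $\phi_1(\xi)=|\xi|\sqrt{1+|\xi|^2}$). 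Hence $A(\xi)^{-1}H_0(\xi)A(\xi)=\mathrm{diag}\big(i\phi_1(\xi),-i\phi_1(\xi)\big)$, and adding back the scalar part yields $A(\xi)^{-1}H(t)(\xi)A(\xi)=\mathrm{diag}\big(i\phi_1(\xi)+iP(t)\cdot\xi,\,-i\phi_1(\xi)+iP(t)\cdot\xi\big)$, which is the symbol of the claimed operator. Returning to physical space concludes the proof.

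The computation is routine and I do not expect any genuine obstacle: the only point deserving a word of care is that the symbol of $A$ degenerates at $\xi=0$, where $\sqrt{-\Delta}$ vanishes and the entry $(2\sqrt{-\Delta})^{-1}$ of $A^{-1}$ is singular, so the conjugation identities hold on $\mathbb{R}^5\setminus\{0\}$ rather than literally everywhere. This is harmless in the sequel, since these formulas are always applied after pairing against the smooth, fast-decaying data of the problem (or inside $L^2$-based quantities), for which the single frequency $\xi=0$ contributes nothing. The rest is just keeping track of the factors of $i$ and the square roots.
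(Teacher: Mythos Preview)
Your proposal is correct and is precisely the ``simple linear algebra calculation'' the paper invokes without writing out: passing to the Fourier side, computing $\det A(\xi)=-2i|\xi|\sqrt{1+|\xi|^2}$ to verify the inverse, and checking that the columns of $A(\xi)$ are eigenvectors of $H_0(\xi)$ with eigenvalues $\pm i\phi_1(\xi)$. Your remark on the harmless degeneracy at $\xi=0$ is an appropriate caveat that the paper leaves implicit.
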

We also recall here the classical decay estimate on the semi-group $e^{itL}$ that is a special case of theorem 2.1 from \cite{Gus}: 
\begin{lemma}\label{classicdisp}
We have the following decay estimate:
\begin{displaymath}
\Vert e^{itL} f \Vert_{\infty} \lesssim t^{-\frac{5}{2}} \Vert U^{3/2} f \Vert_{L^1} 
\end{displaymath}
\end{lemma}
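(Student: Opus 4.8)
The plan is to reduce the estimate to a pointwise bound on the convolution kernel of $e^{itL}$ and then obtain that bound by stationary phase. Writing $g:=U^{3/2}f$, one has $\widehat f(\xi)=\frac{(1+|\xi|^2)^{3/4}}{|\xi|^{3/2}}\,\widehat g(\xi)$, hence $e^{itL}f=\mathcal K_t * g$ with
\[
\mathcal K_t(x)=\frac1{(2\pi)^5}\int_{\mathbb R^5} e^{ix\cdot\xi+it\phi_1(\xi)}\,\frac{(1+|\xi|^2)^{3/4}}{|\xi|^{3/2}}\,d\xi ,
\]
so by Young's inequality it suffices to prove $\|\mathcal K_t\|_{L^\infty}\lesssim t^{-5/2}$. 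The relevant structure of $\phi_1(\xi)=|\xi|\sqrt{1+|\xi|^2}$ is: it is radial with $\phi_1'(r)=\frac{1+2r^2}{\sqrt{1+r^2}}$, strictly increasing from $\phi_1'(0)=1$ and $\sim 2r$ at infinity; $\phi_1''(r)=\frac{r(3+2r^2)}{(1+r^2)^{3/2}}>0$; and $\phi_1(r)=r+\tfrac12 r^3+O(r^5)$ near the origin, i.e. a wave-type phase with a dispersive cubic correction. Thus $D^2\phi_1$ is uniformly elliptic on $\{|\xi|\gtrsim1\}$ and degenerates in the radial direction (to order $|\xi|$) only as $|\xi|\to0$ — precisely the loss that the $|\xi|^{-3/2}$ amplitude factor coming from $U^{3/2}$, which is integrable in dimension five, is present to absorb.

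Next I would insert a Littlewood--Paley decomposition $\mathbf 1=\sum_{\lambda\in 2^{\mathbb Z}}\chi(\xi/\lambda)$ and write $\mathcal K_t=\sum_\lambda\mathcal K_t^\lambda$; after rescaling $\xi=\lambda\eta$ each piece is $\mathcal K_t^\lambda(x)=c\lambda^5\int e^{i\lambda x\cdot\eta+it\phi_1(\lambda\eta)}\frac{(1+\lambda^2|\eta|^2)^{3/4}}{\lambda^{3/2}|\eta|^{3/2}}\chi(\eta)\,d\eta$. For $\lambda\gtrsim1$ the amplitude is a bounded symbol and the rescaled phase is $t\lambda^2$ times a function with uniformly elliptic Hessian, so a standard five-dimensional stationary phase estimate gives $|\mathcal K_t^\lambda(x)|\lesssim(t\lambda^2)^{-5/2}\lambda^5=t^{-5/2}$ when the critical point of $\lambda x\cdot\eta+t\phi_1(\lambda\eta)$ lies in the dyadic shell — which, since $\phi_1'$ is strictly increasing, forces $|x|\sim t\lambda$ — and rapid decay (in $t\lambda^2+|x|\lambda$) otherwise. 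Because the shells $\{|x|\sim t\lambda\}$ are essentially disjoint as $\lambda$ ranges over $2^{\mathbb Z}$, for any fixed $x$ at most one block is near-critical while the remaining blocks form a convergent tail bounded by $t^{-5/2}$; hence $\sup_x\sum_{\lambda\gtrsim1}|\mathcal K_t^\lambda(x)|\lesssim t^{-5/2}$.

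The low frequencies $\lambda\lesssim1$ are the main difficulty. Here I would pass to polar coordinates and use the expansion $K(\rho)=\tfrac12\big(e^{i\rho}k_1(\rho)+e^{-i\rho}k_2(\rho)\big)$ of the Fourier transform of the surface measure of $S^4$ from Lemma \ref{Fourier:sphere}, with $|k_j^{(m)}(\rho)|\lesssim\langle\rho\rangle^{-2-m}$ (and the trivial bound $|K|\lesssim1$ for $\rho\lesssim1$), so that $\mathcal K_t^\lambda(x)$ becomes a sum of one-dimensional radial integrals with phases $t\phi_1(r)\pm r|x|$, amplitude of size $\lambda^{5/2}$ (after the $r^{-3/2}$ weight and the $r^4$ Jacobian), carrying an extra $(r|x|)^{-2}$ gain. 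For the $+$ phase, $\frac{d}{dr}\big(t\phi_1(r)+r|x|\big)=t\phi_1'(r)+|x|\ge t$, so it is never stationary and repeated integration by parts gives decay faster than any power of $t$, summable in $\lambda$. For the $-$ phase, $\frac{d}{dr}\big(t\phi_1(r)-r|x|\big)=t\phi_1'(r)-|x|$ vanishes only if $|x|/t=\phi_1'(r_*)=1+\tfrac32 r_*^2+\cdots$, which pins $r_*\sim\sqrt{|x|/t-1}$ (and requires $|x|\ge t$); hence for fixed $x$ the critical radius lies in at most one dyadic block. On that block, Van der Corput with $|\tfrac{d^2}{dr^2}(t\phi_1(r)-r|x|)|=t\phi_1''(r)\gtrsim t\lambda$, together with the $(r|x|)^{-2}\sim(t\lambda)^{-2}$ factor and $|x|\sim t$, yields $|\mathcal K_t^\lambda(x)|\lesssim |x|^{-2}(t\lambda)^{-1/2}\lambda^{5/2}\lesssim t^{-5/2}$; on every other block $|t\phi_1'(r)-|x||$ is bounded below by a definite dyadic multiple of $\max(t\lambda^2,\,|x|-t)$, and integration by parts gives a contribution summing (over $\lambda$) to $\lesssim t^{-5/2}$. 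Finally the regimes $r|x|\lesssim1$ and $\lambda\lesssim 1/t$ are non-oscillatory and bounded directly by $\int_{r\sim\lambda}r^{5/2}\,dr\sim\lambda^{7/2}$, summing to $\lesssim t^{-7/2}$. Collecting all contributions gives $\|\mathcal K_t\|_{L^\infty}\lesssim t^{-5/2}$ and hence the lemma; the precise execution of the low-frequency summation is the content of \cite[Theorem 2.1]{Gus}.

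The hard part is exactly this low-frequency bookkeeping: showing that the wave-type degeneracy of $\phi_1$ at $\xi=0$, tempered by the cubic dispersive correction and by the $|\xi|^{-3/2}$ weight from $U^{3/2}$, still produces the full $t^{-5/2}$ rate with no logarithmic loss — which rests on the disjointness of the stationary sets, i.e. on $\phi_1'$ being strictly monotone with $\phi_1'(0)=1$.
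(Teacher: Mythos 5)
The paper does not actually prove this lemma: it records the estimate as a special case of \cite[Theorem~2.1]{Gus} (Gustafson--Nakanishi--Tsai), so there is no internal argument to compare against. Your sketch --- reduction to a kernel bound via Young's inequality, Littlewood--Paley splitting, five-dimensional stationary phase for the high frequencies, and at low frequencies polar coordinates combined with the sphere-measure asymptotics of Lemma~\ref{Fourier:sphere} and the strict monotonicity of $\phi_1'$ from the value $\phi_1'(0)=1$ --- is the natural way to prove the cited estimate, and you correctly isolate the two key mechanisms: the $U^{3/2}$ weight supplies an integrable $|\xi|^{-3/2}$ factor near the origin, and the monotonicity of $\phi_1'$ makes the stationary sets of the dyadic pieces essentially disjoint, preventing a logarithmic loss in the summation.

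One small algebraic slip in the Van der Corput step on the critical low-frequency block: the amplitude there is $\sim (\lambda|x|)^{-2}\lambda^{5/2}$, so the bound reads
\[
|\mathcal K_t^\lambda(x)| \lesssim (t\lambda)^{-1/2}\,(\lambda|x|)^{-2}\,\lambda^{5/2} = |x|^{-2}(t\lambda)^{-1/2}\lambda^{1/2} \sim t^{-5/2}
\]
using $|x|\sim t$; your intermediate expression $|x|^{-2}(t\lambda)^{-1/2}\lambda^{5/2}$ carries a spurious $\lambda^{2}$, which is harmless since $\lambda\lesssim1$ but is not what the computation actually gives. Since you, like the paper, ultimately defer the precise low-frequency bookkeeping to \cite{Gus}, the proposal is consistent with the paper's (citation-only) treatment of this lemma.
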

Finally we give the following well-known consequence of the stationary phase lemma for the decay of the Fourier transform of the surface measure of the sphere:
\begin{lemma}\label{Fourier:sphere}
We have the following 
\begin{align*}
\widehat{\sigma_{\mathbb{S}^{d-1}}}(x) = e^{i \vert x \vert} k_1 (\vert x \vert) + e^{-i \vert x \vert} k_2 (\vert x \vert)
\end{align*}
where $ \sigma_{\mathbb{S}^{d-1}}$ denotes the surface measure of the unit sphere in $\mathbb{R}^d$ and $k_1, k_2$ are two smooth functions such that for every integer $N \geqslant 0$ we have
\begin{align*}
\frac{d^N}{dr^N} k_i (r) \lesssim_N \langle r \rangle^{-\frac{d-1}{2}-N}
\end{align*}

\end{lemma}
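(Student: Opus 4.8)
The plan is to reduce to a one-dimensional oscillatory integral by rotational symmetry, isolate the endpoint contributions, and read off the two $e^{\pm i r}$ oscillations there. First I would use that $\sigma_{\mathbb{S}^{d-1}}$ is rotation invariant, so $\widehat{\sigma_{\mathbb{S}^{d-1}}}(x)$ depends only on $r:=\vert x\vert$, and slicing the sphere by the hyperplanes $\{x_1=s\}$ (which gives the standard disintegration $d\sigma_{\mathbb{S}^{d-1}}=(1-s^2)^{\frac{d-3}{2}}\,ds\,d\sigma_{\mathbb{S}^{d-2}}$) yields
\[
\widehat{\sigma_{\mathbb{S}^{d-1}}}(x)=c_d\int_{-1}^{1}e^{-irs}\,(1-s^2)^{\frac{d-3}{2}}\,ds=:F(r),
\]
with $c_d=\vert\mathbb{S}^{d-2}\vert$. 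This already shows $F$ is smooth (indeed real-analytic, being the Fourier transform of a compactly supported measure), so the only issue is the behaviour as $r\to+\infty$.

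Next I would note that the phase $s\mapsto -rs$ has no critical point in $(-1,1)$, so the slow decay is produced entirely at the endpoints $s=\pm1$. Fix a smooth partition of unity $1=\chi_-(s)+\chi_0(s)+\chi_+(s)$ with $\chi_\pm$ supported in a small neighbourhood of $\pm1$ and $\chi_0$ supported compactly inside $(-1,1)$. The contribution of $\chi_0$ has a smooth, compactly supported amplitude against a non-stationary phase, so repeated integration by parts gives $O(r^{-M})$ for all $M$, together with the same bound for every $r$-derivative; this piece can be folded harmlessly into $k_1$. For the endpoint near $s=1$ I would substitute $s=1-u$, turning $\chi_+$'s contribution into $e^{-ir}\int_0^{\delta}e^{iru}\,a(u)\,du$ with $a(u)=\chi_+(1-u)\big(u(2-u)\big)^{\frac{d-3}{2}}=u^{\frac{d-3}{2}}b(u)$, $b$ smooth near $0$; the endpoint near $s=-1$ produces $e^{ir}$ times an analogous integral.

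The key elementary fact I would then establish is: for $\beta=\tfrac{d-3}{2}>-1$ and $b$ smooth with compact support in $[0,\delta)$, the function $r\mapsto\int_0^{\delta}e^{iru}u^{\beta}b(u)\,du$ satisfies $\big\vert\partial_r^N(\cdots)\big\vert\lesssim_N\langle r\rangle^{-\beta-1-N}=\langle r\rangle^{-\frac{d-1}{2}-N}$. For $r\lesssim1$ this is immediate since $u^{\beta+N}\in L^1_{\mathrm{loc}}$; for $r\gtrsim1$ I would rescale $u=v/r$ to pull out the factor $r^{-\beta-1}$ and control the remaining $v$-integral uniformly in $r$, using integrability near $v=0$ and integration by parts in $v$ (exploiting the oscillation of $e^{iv}$ and the decay of $b(v/r)$) away from the origin, and then differentiate under the integral sign to see that each $\partial_r$ improves the decay by $r^{-1}$. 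When $d$ is odd this simplifies drastically: $(1-s^2)^{\frac{d-3}{2}}$ is a polynomial vanishing to order $\tfrac{d-3}{2}$ at $s=\pm1$, and integrating by parts directly in $s$ expresses $F(r)$ as an explicit finite linear combination of $e^{\pm ir}r^{-j}$ with $j\geq\tfrac{d-1}{2}$. (Equivalently one may simply quote the large-argument asymptotics of the Bessel function $J_{\frac{d-2}{2}}$, to which $F$ is proportional.)

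Finally, collecting the endpoint contributions gives $F(r)=e^{ir}k_1(r)+e^{-ir}k_2(r)$ with the claimed derivative bounds for, say, $r\geq2$; to make $k_1,k_2$ smooth down to $r=0$ I would reshuffle with a cutoff $\psi$ equal to $1$ on $\{r\geq2\}$ and $0$ on $\{r\leq1\}$, replacing $k_1$ by $\psi k_1+(1-\psi)\tfrac12 e^{-ir}F$ and $k_2$ by $\psi k_2+(1-\psi)\tfrac12 e^{ir}F$; these are smooth, bounded with all derivatives on the compact set $\{r\leq2\}$ (so the claimed estimate holds trivially there), and unchanged for $r\geq2$. I expect the only real obstacle to be the fractional endpoint estimate for $\int_0^{\delta}e^{iru}u^{\beta}b(u)\,du$ when $d$ is even, since everything else is non-stationary phase together with routine bookkeeping of the $e^{\pm ir}$ factors and the cutoffs near the origin.
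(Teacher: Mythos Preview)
Your proposal is correct and follows a standard route to this classical fact. However, there is nothing to compare against: the paper does not actually prove this lemma. It is stated in the appendix as a ``well-known consequence of the stationary phase lemma'' and left without proof. Your argument --- reduction by rotational symmetry to the one-dimensional integral $\int_{-1}^{1}e^{-irs}(1-s^2)^{(d-3)/2}\,ds$, isolation of the endpoint contributions via a partition of unity, and the scaling/integration-by-parts analysis of $\int_0^{\delta}e^{iru}u^{\beta}b(u)\,du$ --- is exactly the kind of proof one would supply if asked to fill in the details, and is essentially the computation behind the Bessel asymptotics you also mention. The bookkeeping with the cutoff $\psi$ to make $k_1,k_2$ globally smooth is a nice touch that the paper does not address at all.
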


\end{document}